\newcommand{\rem}[1]{} 
\newcommand{\R}{{\mathbb{R}}}
\newcommand{\C}{{\mathbb{C}}}
\newcommand{\Z}{{\mathbb{Z}}}
\newcommand{\T}{{\mathbb{T}}}
\newcommand{\EM}{{\cal E \!\:\!\! M}}
\newcommand{\EC}{{\cal E \!\:\! C}}
\newcommand{\IC}{{\cal I \!\:\! C}}
\newcommand{\res}{\mathop{\mathrm{Res}}}
\newcommand{\rank}{\mathop{\mathrm{rank}}}
\newcommand{\const}{{\rm const}}
\newcommand{\J}{{\mathbf{J}}}
\newcommand{\trace}{\mathop{\mathrm{tr}}}
\newcommand{\Ad}{{\mathrm{Ad}}}
\newcommand{\xx}{{\mathbf x}}
\newcommand{\yy}{{\mathbf y}}
\newcommand{\polpo}{ {\cal P}}
\newcommand{\polw}{ {\cal O}}
\newcommand{\xib}{{\boldsymbol{\bf\xi}}}
\newcommand{\etab}{{\boldsymbol{\bf\eta}}}
\newcommand{\Pro}{\mathcal{P}} 
\newcommand{\scap}[2]{\langle#1; #2\rangle}
\newcommand{\e}{ {\bf e}}
\newcommand{\oo}{{\mathfrak{so}}}
\renewcommand{\gg}{{\mathfrak{g}}}
\newcommand{\id}{{\rm id}}
\newcommand{\dee}{{\rm d}}
\newcommand{\X}[1]{{X_{\textstyle \! #1}}}
\newtheorem{theorem}{Theorem}[section]
\newtheorem{lemma}[theorem]{Lemma}
\newtheorem{proposition}[theorem]{Proposition}
\newtheorem{corollary}[theorem]{Corollary}
\newtheorem{definition}[theorem]{Definition}
\newtheorem{rmrk}[theorem]{Remark}
\newenvironment{remark}{\begin{rmrk} \begin{rm}}{\end{rm} \end{rmrk}}
\begin{document}

\title{The degenerate C.~Neumann system~I~: \\
       symmetry reduction and convexity}

\author
{
{\protect\normalsize Holger R.\ Dullin} \\
{\protect\footnotesize\protect\it
School of Mathematics and Statistics,  University of Sydney} \\[-2mm]
{\protect\footnotesize\protect\it
Sydney NSW 2006, Australia\footnote{ email:
Holger.Dullin@sydney.edu.au
}.} \\
\\
{\protect\normalsize Heinz Han{\ss}mann} \\
{\protect\footnotesize\protect\it
Mathematisch Instituut, Universiteit Utrecht} \\[-2mm]
{\protect\footnotesize\protect\it
3508 TA Utrecht, The Netherlands.}
}

\date{\protect\normalsize 21 October 2011}

\maketitle

\begin{abstract}
\noindent
The C. Neumann system describes a particle on the sphere $S^n$ under the
influence of a potential that is a quadratic form.
We study the case that the quadratic form has $\ell + 1$ distinct eigenvalues
with multiplicity.
Each group of $m_{\sigma}$ equal eigenvalues gives rise to an
$O(m_{\sigma})$--symmetry in configuration space.
The combined symmetry group $G$ is a direct product of $\ell+1$ such factors, 
and its cotangent lift has an $\Ad^*$--equivariant Momentum mapping.
Regular reduction leads to the Rosochatius system on $S^\ell$, which has the
same form as the Neumann system albeit for an additional effective potential.

To understand how the reduced systems fit together we use singular
reduction to construct an embedding of the reduced Poisson space
$T^*S^n/G$ into $\R^{3\ell+3}$.
The global geometry is described, in particular the bundle structure
that appears as a result of the superintegrability of the system.
We show how the reduced Neumann system separates in elliptical-spherical
co-ordinates.
We derive the action variables and frequencies as complete hyperelliptic
integrals of genus $\ell$.
Finally we prove a convexity result for the image of the Casimir mapping
restricted to the energy surface.
\end{abstract}

\section{Introduction}

The C.~Neumann system is one of the few examples of an integrable system
with $n$~degrees of freedom, where $n$ is an arbitrary positive integer.
It describes a particle moving on the sphere $S^n$ under the influence of
a linear force. 
As C.G.~Jacobi's student, Carl Neumann studied the case $n=2$ in his
thesis~\cite{neumann56}.
The general case has been beautifully described by
Moser~\cite{Moser80,Moser80b,Moser83}.
Kn\"orrer~\cite{Knoerrer82} elucidated the connection to the geodesic flow
on ellipsoids, see also~\cite{Veselov94}.
Later Kn\"orrer~\cite{Knoerrer85} also showed that near hyperbolic critical
values of the Integral mapping the Kolmogorov non-degeneracy condition
holds so that KAM theory can be applied.
Whether this is true for all regular values is still open. 
Devaney~\cite{Devaney78} noticed that the system has transversal homoclinic
intersections while being Liouville integrable.
Ratiu~\cite{Ratiu81} realised that the system can be written as dynamics
on the adjoint orbit of $SO(n) \ltimes Sym(n)$ on the Lie algebra of this
semi-direct product.
The action variables and their Picard--Fuchs equations haven been derived
in~\cite{dullin01}.

All these works assume the generic case that the quadratic potential
$\frac{1}{2} \scap{\xx}{{\bf A}\xx}$, $\xx \in \R^{n+1}$ of the system
embedded in $\R^{n+1}$ has $n+1$ {\em distinct} eigenvalues
$a_0 < a_1 < \ldots < a_n$. 
The present paper studies the degenerate case in which the eigenvalues $a_\nu$
are not all distinct.
This modification leads to the Rosochatius system, an
integrable $(n+1)$--parameter deformation of the Neumann system, which we
derive and analyse.
The fact that the degenerate Neumann system is also integrable has been
shown in~\cite{Zhangju92}.
This is non-trivial because the smooth integrals
(\eqref{eqn:genIntegrals}, see below) 
of the non-degenerate system found by Uhlenbeck~\cite{Moser80} become
singular in this limit.
So even though equal eigenvalues make the system simpler in the sense that
global symmetry is introduced, a non-trivial transition takes place.

The non-degenerate Neumann systems on $S^n$ with $n > 1$ does not admit
{\em globally defined} continuous symmetries.
The symmetry group of the generic Neumann system is the direct product
$\Z_2^{n+1}$ of $n+1$ factors $O(1) \cong \Z_2$ and hence discrete.
Correspondingly, there are no global smooth actions, i.e.\ integrals of
motion that globally generate periodic flows.
By the Liouville--Arnold theorem almost everywhere there exist local actions,
but usually they cannot be extended to smooth global integrals.
Hence locally there is a free $\T^n$--action (defined by the commuting flows
of local actions $I_j$, cf.~\cite{dullin01}), but globally there is not even a
$\T^1$--action.
Nevertheless, the system can be integrated by separation of variables.
Explicit solutions can be derived  in terms of $\theta$--functions of
genus $n$, see~\cite{Moser80b}.

The {\em degenerate} Neumann system on $S^n$ admits a large global symmetry group,
and hence possesses corresponding global actions.
For each group of $m_\sigma$ equal eigenvalues we have an
$O(m_\sigma)$ symmetry that acts by rotation of the corresponding group of
co-ordinates and by cotangent lift on the momenta. 
In general, the potential has $\ell + 1$ distinct eigenvalues
$b_0 < \ldots < b_\ell$ of multiplicities $m_0, \ldots, m_\ell$.
The corresponding symmetry group of the Neumann systems on~$S^n$ 
is $G = O(m_0) \times O(m_1) \times \ldots \times O(m_{\ell})$.
In the generic case $\ell = n$ that all eigenvalues are distinct
we recover $G = \Z_2^{n+1}$.
The symmetry group $G$ describes the freedom in the choice of the (non-unique)
co-ordinate system in which $\bf A$ is diagonal.

In most of the paper we consider the orthogonal groups as the relevant factors
in the symmetry group~$G$, because this allows a uniform treatment of the
factors~$O(m)$ no matter what $m$ is.
However, sometimes it is more appropriate to restrict in one or several factors
to the special orthogonal group.
This is the case when the singularities of the reduced phase space can be
avoided, and it occurs where a factor of $G$ is~$O(2)$. 
The reason for this is a subtle difference between the invariants of the $O(2)$
and the $SO(2)$ actions, which is not present when the factor is $O(m)$ with
$m>2$.
This difference becomes particularly important when studying Hamiltonian
monodromy, which appears e.g.\ when there are two eigenvalues of multiplicity
two each.
The corresponding situation in the geodesic flow on the ellipsoid has been
studied in \cite{Davidson07a,Davidson07b}.

Our main result is the reduction of the degenerate Neumann system with
the symmetry group~$G$ and a detailed description of the relation of the
reduced system with $\ell$ degrees of freedom with the Rosochatius system.
After describing the setting in the next section, and a review of 
orthogonal group actions in Section~\ref{orthogonalgroupactions},
this reduction is performed in Section~\ref{reduce}.
When the momentum is weakly regular, Marsden--Weinstein
reduction~\cite{marsden74} leads to a system on~$S^\ell$
with an additional effective potential.
Since not all momentum values are weakly regular we perform singular
reduction, which for singular values leads to reduced phase spaces
with singularities and simultaneously embeds all reduced phase spaces
coming from (weakly) regular values as symplectic leaves of a Poisson
structure in $R^{3\ell + 3}$. 
When $m_{\sigma} \geq 3$ for at least one index~$\sigma$ the system is
superintegrable, with a non-commutative symmetry group, and the lower
dimensional tori are generically parametrised by a product of oriented
Gra{\ss}mannian manifolds $G_{m_{\sigma}, 2}$.
Furthermore we compute the relative equilibria and characterise the
Energy--Casimir mapping.
In Section~\ref{rosochatius} we show how the Rosochatius system obtained
by reduction of the degenerate Neumann system can be separated in
elliptical-spherical co-ordinates.
From this the non-trivial actions and frequencies are derived.
In the final Section~\ref{globalstructureoftheflow} we show that the
set of critical values of the Integral mapping of the reduced system
(consisting of the $\ell$~independent integrals of motion that are not
obtained from the symmetry group~$G$) is topologically a $2^{\ell}$--tant
when all Casimirs are nonzero.
Finally, we prove that the image of the Casimir mapping restricted to the
energy surface is a convex set, which in the limit of large energy tends to
a convex polyhedron.

\section{The Neumann System}
\label{theneumannsystem}

The Neumann system describes a particle moving on a sphere under the
influence of a quadratic potential.
We use $\xx = (x_0,x_1,\dots,x_n)$ as co-ordinates in $\R^{n+1}$ in
which the sphere $S^n$ is embedded as $C_1 = \scap{\xx}{\xx} = 1$
with the standard Euclidean scalar product.
The kinetic energy is $T(\dot \xx) = \frac{1}{2} \scap{\dot \xx}{\dot \xx}$
and the potential is $V(\xx) = \frac{1}{2} \scap{\xx}{{\bf A}\xx}$.
Newton's equations are
\begin{equation}  \label{eqn:Newton}
   \ddot \xx = -\nabla V + \lambda \xx
\end{equation}
where the Lagrange multiplier $\lambda$ gives the strength
$\lambda = 2V - 2T$ of the constraining normal force.
The initial conditions must be chosen such that
$\dot C_1 = 2 \scap{\xx}{\dot \xx} = 0$.
To preserve $C_1$ we furthermore require $\ddot C_1 = 0$, which
determines $\lambda$.
By a rotation of the co-ordinate system we can always achieve that ${\bf A}$
is diagonal with diagonal entries $a_{\nu}$, $\nu = 0,1,\dots,n$.
The classical Neumann system has $n=2$, and distinct eigenvalues of 
${\bf A}$ ordered as $a_0 < a_1 < a_2$.
We call a Neumann system (with arbitrary $n$) degenerate if ${\bf A}$
has multiple eigenvalues.

\subsection{Global Hamiltonian description}
\label{globalhamiltoniandescription}

The canonical momenta are $\yy = (y_0, y_1, \ldots y_n)$.
They satisfy the canonical Poisson bracket relations
$[x_{\nu}, y_{\kappa}] = \delta_{\nu \kappa}$,
$[x_{\nu}, x_{\kappa}] =[y_{\nu}, y_{\kappa}] = 0$.
Besides $C_1 = 1$ the momentum vectors are constrained to be tangent to
the sphere, $C_2 = \scap{\xx}{\yy} = 0$.
This embeds the phase space $T^*S^n$ into~$\R^{2n+2}$.
Since we are dealing with a constrained system for which $\xx$ are
not generalized co-ordinates the Lagrangian is degenerate and the
Legendre transformation to the Hamiltonian does not work.
Instead we just write down the Hamiltonian in canonical variables of the
embedding space
\begin{equation}  \label{eqn:Hamiltonian}
   H(\xx, \yy) =  \frac{1}{2} \scap{\yy}{\yy} +  V(\xx)  \, .
\end{equation}
To ensure that the constraints $C_1 = 1$ and $C_2 = 0$ are respected by
the corresponding Hamiltonian vector field, we modify the canonical Poisson
bracket $[ .. \, , .. ]$ to the Dirac Poisson bracket $\{ .. \, , .. \}$,
cf.\ e.g.~\cite{cushman97}.
We then confirm directly that $\{ .. \, , .. \}$ together
with~\eqref{eqn:Hamiltonian} lead to Newton's equations as well,
see~\eqref{eqn:HamEqs} below.

For the Dirac bracket one first calculates the bracket of the constraints.
The only nonzero bracket in our case is  $[C_1, C_2] = 2 C_1$.
Alltogether they form the matrix $\gamma_{ij} = [C_i, C_j]$. 
The original bracket is modified by $[f, C_i] [C_j, g] (\gamma^{-1})_{ij}$. 
This gives
\begin{equation}  \label{eqn:genDirac}
   \{ f, g \}   =   [f,g]  +  \frac{1}{2 C_1} [f, C_1] [C_2, g]
   -  \frac{1}{2 C_1} [f, C_2] [C_1, g]  \, .
\end{equation}
Sometimes we need this bracket in spaces of different dimensions,
in which case we indicate it by a subscript, whence the above bracket
reads $\{ .. \, , .. \}_{2n+2}$.
This new Poisson structure $\{ .. \, , .. \}$ has $C_1$ and $C_2$ as
Casimirs and is explicitly given by 
\begin{equation}  \label{eqn:Dirac}
   \{ x_{\nu}, x_{\kappa} \} = 0 \, ,
   \qquad
   \{ x_{\nu}, y_{\kappa} \} =
   \delta_{\nu \kappa} - \frac{x_{\nu} x_{\kappa}}{C_1} \, ,
   \qquad
   \{ y_{\nu}, y_{\kappa} \} = - \frac{L_{\nu \kappa}}{C_1} \, .
\end{equation}
Here $L_{\nu \kappa} =  x_{\nu} y_{\kappa} - x_{\kappa} y_{\nu}$ are
the components of the angular momentum 
$
    \J : T^*\R^{n+1} \longrightarrow \oo(n+1)^* \simeq \R^{n(n+1)/2}
$.
The Poisson bracket $\{  L_{\nu \kappa}, L_{\mu \lambda} \}$ 
is nonzero if and only if exactly one of the indices $\nu$, $\kappa$
coincides with one of the indices $\mu$, $\lambda$. 
Using $L_{\nu \kappa} = -L_{\kappa \nu}$ the single coinciding index can
be moved so that we obtain all nonzero bracket relations from
\begin{displaymath}
   \{  L_{\nu \kappa}, L_{\kappa \lambda} \} = L_{ \lambda \nu} \,.
\end{displaymath}
Note that
$\{  L_{\nu \kappa}, L_{\kappa \lambda} \}
 = [ L_{\nu \kappa}, L_{\kappa \lambda} ]$
because $[L_{ij} , C_1] = 0$.
Now Hamilton's equations $\dot{f} = \{ f, H \}$ restricted to $T^*S^n$ are
\begin{equation}  \label{eqn:HamEqs}
   \dot{\xx} = \yy, \quad
   \dot{\yy} = -\nabla V + ( \scap{\xx}{\nabla V}-2T) \xx \, .
\end{equation}
Here we used the Casimir $C_2 = 0$ and the identity
$\sum L_{\nu \kappa} y_{\kappa} = x_{\nu} \sum y_{\kappa}^2 $.
These equations are equivalent to
Newton's equations~\eqref{eqn:Newton}.

The Neumann system with distinct eigenvalues $a_{\nu}$ is Liouville integrable.
The polynomial integrals $\tilde F_0, \dots, \tilde F_n$ given
in~\cite{Moser80} are
\begin{equation}  \label{eqn:genIntegrals}
    \tilde F_{\nu}(\xx,\yy) = x_{\nu}^2 +
    \sum_{\mu \neq \nu}^n \frac{L_{\nu\mu}^2}{a_{\nu} - a_{\mu}}, 
\end{equation}
and they are independent up to the relation $\sum \tilde F_{\nu} = C_1$.
When restricting to $T^*S^n$ the Hamiltonian can be expressed as
\begin{displaymath}
   H = \frac{1}{2} \sum_{\nu = 0}^n a_{\nu} \tilde F_{\nu} \,.
\end{displaymath}
Surprisingly, these integrals were not classically known~\cite{Moser80}.

Equations \eqref{eqn:HamEqs} are valid for dynamics on $S^n$ with
arbitrary potential.
Integrable systems in this class of Hamiltonians include not only
the Neumann system with $V(\xx) =  \frac{1}{2} \sum a_{\nu} x_{\nu}^2$
but also the spherical pendulum with $V(\xx) = x_0$ or the Rosochatius
system~\cite{rosochatius77, Moser80b, Macfarlane92} with
$V(\xx) =  \frac{1}{2} \sum a_{\nu} x_{\nu}^2 + w_{\nu} / x_{\nu}^2$.
For $n=2$ and the linear potential we obtain the classical spherical
pendulum on $S^2$, see e.g.~\cite{cushman97}.
For $n=2$ with the quadratic potential we obtain the classical Neumann
system on $S^2$~\cite{neumann56}; the degenerate case with $a_1 = a_2$
(``quadratic spherical pendulum'') has been studied
in~\cite{bates93, efstathiou04}.

In this paper we are concerned with the cases in which ${\bf A}$ has
multiple eigenvalues, so that there is additional rotational symmetry
among the groups of co-ordinate axes $x_{\nu}$ with the same $a_{\nu}$.
Let $b_{\sigma}$, $\sigma=0, \dots, \ell$, denote the values of different
coefficients $a_{\nu}$ where $b_{\sigma}$ has multiplicity $m_{\sigma}$. 
We arrange groups of equal $a_{\nu}$ to be labeled consecutively and 
assume they are sorted by size $b_0 < \ldots < b_\ell$.
Generally we use greek indices $\nu, \kappa, \ldots$ ranging from $0$ to~$n$
and $\sigma, \tau, \ldots$ ranging from $0$ to~$\ell$, while latin indices
$i, k, \ldots$ run through the index sets $I_{\sigma}$ that contain the
$m_{\sigma}$ indices which have the same coefficient $b_{\sigma} = a_i$
for $i \in I_{\sigma}$.
Then the potential can be written as
\begin{displaymath}
   V =  b_0 V_0 + b_1 V_1 + \ldots + b_{\ell} V_{\ell}, \qquad
   V_{\sigma} = \frac{1}{2} \sum_{i \in I_{\sigma}}^{m_{\sigma}} x_i^2  \,.
\end{displaymath}
The potential is invariant under the symmetry group
\begin{displaymath}
   G   =   O(m_0) \times O(m_1) \times \ldots \times O(m_{\ell})
\end{displaymath}
where $O(m_{\sigma})$ acts on the co-ordinates summed over in $V_{\sigma}$: 
the sum of their squares remains constant.
The group $G$ describes the ambiguity for the choice of an orthonormal basis
of eigenvectors of~${\bf A}$.
Indeed, the co-ordinate system chosen above is obviously not unique, but
any $G$--action on it gives another admissible co-ordinate system (possibly with opposite orientation) 
in which the Hamiltonian assumes the same form.
Under the rotations from $G$ the velocities transform (by tangent lift) in
the same way as the co-ordinates, so the kinetic energy remains constant
as well. 

The integrals \eqref{eqn:genIntegrals} become singular in the degenerate case.
The integrals in the degenerate case were found in~\cite{Zhangju92}.
For each set of $m_{\sigma}$ equal coefficients $a_i = b_{\sigma}$,
$i \in I_{\sigma}$ the integrals are the angular momenta $L_{ik}$ where
$i, k \in I_{\sigma}$ and the function 
\begin{equation} \label{eqn:integralsdegenerate}
    F_{\sigma} = \sum_{i \in I_{\sigma}}^{m_{\sigma}} x_i^2
    + \sum_{\tau \neq \sigma}^{\ell}
    \sum_{k \in I_{\sigma}}^{m_{\sigma}}
    \sum_{l \in I_{\tau}}^{m_{\tau}}
    \frac{L_{kl}^2}{b_{\sigma}-b_{\tau}} \,.
\end{equation}
Assembling the angular momenta into
\begin{displaymath}
   W_{\sigma} =
   \sum_{i<k \in I_{\sigma}}^{\frac{1}{2} m_{\sigma}(m_{\sigma} - 1)}
   L_{ik}^2
\end{displaymath}
we can express the Hamiltonian on $T^*S^n$ as
\begin{displaymath}
   H = \frac{1}{2} \sum_{\sigma = 0}^{\ell}
   b_{\sigma} F_{\sigma} + W_{\sigma} \,.
\end{displaymath}
Considering the case with equal coefficients $a_i = b_\sigma$, $i\in I_\sigma$
as the limit (denoted by $a \to b$ below)
of the generic case with all $a_i$ distinct the integrals $\tilde F_\nu$ of the
generic case diverge. However, the following combinations remain finite,
\begin{displaymath}
   \lim_{a \to b} (a_\nu - a_\mu) \tilde F_\nu = L^2_{\mu\nu}, \qquad
   \lim_{a \to b}  \sum_{k \in I_\sigma} \tilde F_k = F_\sigma \,,
\end{displaymath}
and the relation $\sum_{\sigma = 0}^\ell F_\sigma = C_1 = 1$ on~$S^n$
is inherited.

\subsection{Basic Dynamics}
\label{basicdynamics}

Some simple observations can be made directly from the 
differential equations of the Neumann system. 
Understanding this local information helps to put together the
global picture later on.

Observe that the differential equations \eqref{eqn:Newton} have
the simple form  $\ddot \xx =  {\bf C} \xx$ with the diagonal
matrix ${\bf C} = \mathrm{diag}( (\lambda - a_{\nu} )) $.
This may appear to be linear, but in fact $\lambda = 2 V - 2 T$
depends on all the variables.
Still, for any choice of subset of $d$ indices
$I = \{ \nu_1, \dots, \nu_d \}$ the $2d$ dimensional invariant
subsystem $x_{\nu} = y_{\nu} = 0$, $\nu \not \in I$,  is again a
Neumann system of lower dimension. 

In the simplest case $d=1$ the invariant subsystem consists of
two equilibrium points at $(\xx,\yy)  = (\pm \e_{\nu},  0)$.
The Hessian of the potential $V$ in local co-ordinates
(which are $\xx$ with $x_{\nu}$ omitted) near the equilibrium has
eigenvalues $2(a_{\kappa} - a_{\nu})$, $\kappa \neq \nu$. 
Denote the number of $a_{\kappa} > a_{\nu}$ by $m_{\nu}^u$ and 
the number of $a_{\kappa} < a_{\nu}$ by $m_{\nu}^s$.
Then the spectrum of the equilibrium at $\pm \e_{\nu}$ has $m_{\nu}^u$
nonzero real eigenvalues and $m_{\nu}^s$ nonzero imaginary eigenvalues.
For the coefficients $b_{\sigma}$ with multiplicity $m_{\sigma}$ there
is a sphere $S^{m_{\sigma}-1}$ of equilibrium points of dimension
$m_{\sigma}-1$ given by $V_{\sigma} = \frac{1}{2}$.
All of them are equilibrium points of (\ref{eqn:Newton}) because
$\lambda = b_{\sigma}$ exactly balances the force for all nonzero
co-ordinates.
These non-isolated equilibrium points are reflected by the number
$m_{\sigma} - 1$ of zero eigenvalues.

For $d=2$ there is a Neumann subsystem with 1 degree of freedom in the
plane with the chosen indices.
The potential on the circle obtained by intersecting the plane with
the sphere $C_1 = 1$ is periodic, and either has two minima and two
maxima (like ``twice'' a pendulum), or is constant when the coefficients
$a_{\nu}$ in this plane are equal. 
The types of motion inside this plane are easy to obtain, but the normal 
stability of these orbits is already non-trivial.
These $n(n+1)/2$ families of periodic orbits  in particular contain the
non-linear normal modes of the $n+1$ equilibria.

For $d=3$ we find a Neumann subsystem with 2 degrees of freedom. 
The generic motion takes place on $2$--tori $\T^2$ when the coefficients
of the three planes are not all equal.
In this case there exist isolated periodic orbits on the energy shell
which are not inside an invariant plane with $d=2$.
When all three coefficients are equal the potential is constant on the
sphere, and the system defines the geodesic flow on~$S^2$.
In that case all the orbits are closed, and in fact great circles on $S^2$.

The hierarchy of invariant subsystems continues up to $d=n$.
Each invariant set has a normal stability, and invariant manifolds
associated to that normal stability.
Together with the invariant tori they form an intricate system of invariant
submanifolds that together foliate the phase space.
The foliation for the non-degenerate Neumann system has been studied
in~\cite{dullin01}.
Interestingly, the foliation becomes more complicated when additional 
symmetry is introduced. 
This additional structure is described in this paper.

Now we focus on those invariant subspaces for which the above index set~$I$
is equal to an index set~$I_{\sigma}$ of equal coefficients, given by
\begin{equation} \label{eqn:subspace}
   \left\{ (\xx, \yy) \in T^*S^n :
   x_{\nu} = y_{\nu} = 0 \, \forall_{\nu \notin I_{\sigma}}
   \right\} \,.
\end{equation}
Consider an initial condition $(\xx_0, \yy_0)$ with $\yy_0 \neq 0$
inside this invariant subspace. 
Choose a new orthogonal co-ordinate system which has among its unit vectors  
the directions of $\xx_0$ and $\yy_0$.
This is always possible by an orthogonal transformation because
$\xx_0$ and~$\yy_0$ are orthogonal.
Moreover, the 2nd order differential equation \eqref{eqn:Newton} is
invariant under such a rotation, because we only need to rotate within
the subset of co-ordinates $I_{\sigma}$ for which there are equal
coefficients.
We already noted that the symmetry group $G$ originates from this freedom
of choice of co-ordinates.
After this co-ordinate transformation, only two of the $m_{\sigma}$
co-ordinates are involved in the dynamics, while the remaining
$m_{\sigma} - 2$ co-ordinates remain constant.
In this sense locally nothing new happens when $m_{\sigma} \geq 3$.
However, globally, the geometry does become more interesting for
$m_{\sigma} \geq 3$: a different initial condition gives a solution that
differs by a rotation.
The set of all such solutions is given by the set of all oriented
$2$--planes in $\R^{m_{\sigma}}$.
This is a Gra{\ss}mannian manifold and it appears as a coadjoint
orbit below.

Interestingly, most of the above analysis can still be applied even when
the initial condition $(\xx_0, \yy_0)$ is {\em not} inside an invariant
subspace~\eqref{eqn:subspace}.
We then have co-ordinates in different $I_{\sigma}$ and all these can
again be simplified by a rotation, so that only two co-ordinates in
each subsystem are non-constant. 
The difference is that the Lagrange multiplier in \eqref{eqn:Newton}
depends on all the co-ordinates and momenta, and therefore is not a
constant anymore.
In this way, after the co-ordinate transformation all the remaining
equations are coupled in a non-trivial way.

When at least one $m_{\sigma} \geq 3$ the system becomes superintegrable,
cf.~\cite{nekhoroshev72,fomenko78,evans90,kibler90,Fasso04}, having more
integrals than degrees of freedom.
This makes the phase space a ramified torus bundle, with regular
fibres~$\T^{\ell + \tilde{\ell} + 1}$.
Here $\ell + 1$ is the number of distinct eigenvalues of~${\bf A}$ and
$\tilde{\ell} + 1$ is the number of groups of multiple eigenvalues.
If $m_{\sigma} \leq 2$ for all $\sigma = 0, \ldots, \ell$ then
$\ell + \tilde{\ell} + 1 = n$ and the regular fibres of the ramified
torus bundle are Lagrangian tori.
Under regular reduction the fibres $\T^{\ell + \tilde{\ell} + 1}$
are mapped to $\T^{\ell}$ --- the generic reduced motion takes place
on $\ell$--tori.
Hence $\ell$ is the number of frequencies needed to describe a generic
reduced motion.
For each of the $\tilde{\ell} + 1$ groups~$I_{\sigma}$ with
$m_{\sigma} \geq 2$ the full system turns out to have one additional
frequency.

In the following we are going to make the above more precise and describe
the associated global geometry.
Our treatment of the degenerate Neumann system heavily relies on the
reduction with respect to the symmetry group
\begin{displaymath}
   G = O(m_0) \times \ldots \times O(m_{\ell})
\end{displaymath}
with subgroup
\begin{displaymath}
   \widetilde{G} = O(m_0) \times \ldots \times O(m_{\tilde{\ell}}) \, , \quad
   G / \widetilde{G} \cong \Z_2^{\ell - \tilde{\ell}}
\end{displaymath}
whence we now recall some facts about the cotangent lifted group action
of the orthogonal group $O(m)$.
We shall later see that understanding this group action is the key to 
an efficient description of the symmetry reduction in the
degenerate Neumann system.
The following section also serves as a quick introduction to (singular)
reduction and to the geometry of superintegrable systems.

\section{Orthogonal Group Action}
\label{orthogonalgroupactions}

Consider the orthogonal group $O(m)$ of invertible $m\times m$ matrices
$g$ which preserve the Euclidean scalar product in $\R^m$, so that for
$\xx \in \R^m$ we have $\scap{g \xx}{ g \xx} = \scap{\xx}{\xx}$.
Let the group $G = O(m)$ act on $T^*\R^m = \R^{2m}$ with 
co-ordinates $(\xx,\yy)$ and symplectic structure
$\Omega = \dee \xx \wedge \dee \yy$.
The rotation $g \in O(m)$ gives a point transformation by left
multiplication, the induced action on the cotangent bundel $T^*\R^m$
is obtained by cotangent lift.
Since $(g^t)^{-1} = g$ we find the symplectic action
\begin{equation} \label{eqn:Oaction}
\begin{array}{cccc}
   \Phi : & O(m) \times T^*\R^m & \longrightarrow & T^*\R^m  \\
   & (g,  \xx,  \yy) & \mapsto & \Phi_g(\xx, \yy) = (g \xx, g \yy) \,.
\end{array}
\end{equation}
Hence, the orthogonal group acts by rotating co-ordinates and conjugate
momenta in the same way.
The general $\Phi$--invariant Hamiltonian is a function of the invariants
of this group action, whence it can be expressed as a function
\begin{equation}  \label{eqn:HamCF}
    H(\xx, \yy)  =
    \hat{H}( \scap{\xx}{\xx}, \scap{\yy}{\yy}, \scap{\xx}{\yy} )
\end{equation}
of the three basic invariants of the action~$\Phi$.
In other words, these three basic invariants form a Hilbert basis, 
and since there are no constraining syzygies between these
invariants this is a free Hilbert basis.
A standard example is the motion in a central force field, for
which the Hamiltonian reads
\begin{displaymath}
   H (\xx, \yy) = \frac{1}{2} \scap{\yy}{\yy} + U( \scap{\xx}{\xx} ) 
\end{displaymath}
with radial potential~$U$.

\subsection{Momentum mapping}
\label{sub:momentummapping}

By Noether's theorem each one-parameter symmetry leads to a conserved quantity.
For multi-parameter groups like~$O(m)$ the integrals of motion can be assembled
into the momentum mapping of the group action.

\begin{lemma}  \label{lem:momOm}
   For $m \geq 2$ the action of $G = O(m)$ on $T^*\R^m$ given by
   $\Phi_g(\xx, \yy) = (g\xx, g\yy)$ 
   has an $\Ad^*$--equivariant Momentum mapping given by
   \begin{equation} \label{eqn:MomMapOm}
   \begin{array}{cccc}
      \J : & T^*\R^{m} & \longrightarrow & \oo(m)^*  \\
      & (\xx,\yy) & \mapsto &  \xx \otimes \yy - \yy \otimes \xx \,.
   \end{array}
   \end{equation}
   Any $0 \neq \mu \in \oo(m)^*$ is a weakly regular value, which is
   regular for $m = 2, 3$.
\end{lemma}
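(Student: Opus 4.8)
The plan is to settle the three assertions with one common tool: a constant-rank computation for $\dee\J$ along a nonzero fibre. First I would produce $\J$ as the cotangent-lift momentum map. For $\xi \in \oo(m)$ the infinitesimal generator of $\Phi$ at $(\xx,\yy)$ is $(\xi\xx,\xi\yy)$, so the standard cotangent-lift formula gives $\scap{\J(\xx,\yy)}{\xi} = \scap{\yy}{\xi\xx}$. Identifying $\oo(m)^*$ with the skew-symmetric matrices through the trace form and noting that $\scap{\yy}{\xi\xx} = \yy^t\xi\xx$ pairs the skew $\xi$ only with the skew part of $\xx\yy^t$, this reads off, up to the normalisation of the trace pairing, as $\J(\xx,\yy) = \xx\otimes\yy - \yy\otimes\xx$, whose entries are the $L_{\nu\kappa}$. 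For $\Ad^*$-equivariance I would either cite the general fact that cotangent lifts carry $\Ad^*$-equivariant momentum maps, or verify it directly from $\J(g\xx,g\yy) = g\,(\xx\otimes\yy-\yy\otimes\xx)\,g^t = \Ad_g\J(\xx,\yy)$, using $g^t = g^{-1}$.

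Next I would analyse the fibre over a fixed $0 \neq \mu \in \oo(m)^*$. Because $\xx\otimes\yy-\yy\otimes\xx$ is a sum of two rank-one matrices it has rank at most two, and a nonzero skew matrix has even rank, so $\mu$ must have rank exactly two; write $\mu = \mathbf a\otimes\mathbf b - \mathbf b\otimes\mathbf a$ with $\mathbf a,\mathbf b$ spanning the column space $P$ of $\mu$. Any $(\xx,\yy)\in\J^{-1}(\mu)$ then has $\xx,\yy$ spanning exactly $P$, and expressing them in the basis $\mathbf a,\mathbf b$ collapses the equation $\J(\xx,\yy)=\mu$ to the single requirement that the $2\times 2$ change-of-basis matrix have determinant one. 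Hence $\J^{-1}(\mu)\cong SL(2,\R)$, a smooth $3$-dimensional manifold whose dimension is independent of $m$.

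The heart of the argument is the rank of $\dee\J$ on this fibre, which I would read off from the bifurcation lemma: the image of $\dee\J_p$ is the annihilator of the isotropy subalgebra $\gg_p = \{\xi\in\oo(m) : \xi\xx = \xi\yy = 0\}$. Since $\xx,\yy$ are independent, $\gg_p$ consists of the skew maps vanishing on $P$, so $\gg_p\cong\oo(m-2)$ and $\rank\dee\J_p = \binom{m}{2}-\binom{m-2}{2} = 2m-3$. This value is constant on the whole open set where $\xx,\yy$ are independent, so by the constant-rank theorem $\J^{-1}(\mu)$ is a submanifold with $T_p\J^{-1}(\mu)=\ker\dee\J_p$; as $\dim\ker\dee\J_p = 2m-(2m-3)=3$ this matches the $SL(2,\R)$ computation. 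That is precisely weak regularity. I expect this rank bookkeeping --- pinning down $\gg_p\cong\oo(m-2)$ and the resulting drop from $\binom{m}{2}$ --- to be the only delicate point.

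Finally, $\mu$ is a genuine regular value exactly when $\dee\J_p$ is surjective, i.e.\ when $2m-3 = \dim\oo(m) = \binom{m}{2}$. This equation is equivalent to $(m-2)(m-3)=0$, giving regularity precisely for $m=2$ and $m=3$; for $m\geq 4$ one has $2m-3 < \binom{m}{2}$, so $\mu$ is weakly regular but not regular, in keeping with the image of $\J$ being the proper set of rank-$\leq 2$ skew matrices.
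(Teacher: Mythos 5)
Your proposal is correct. The first half --- constructing $\J$ from the Hamiltonians $J_\xi = \scap{\xi\xx}{\yy}$ of the infinitesimal generators, identifying $\oo(m)^*$ via the trace pairing, and checking equivariance through $\J(g\xx,g\yy) = g(\xx\yy^t - \yy\xx^t)g^t$ --- is exactly the paper's argument. Where you genuinely diverge is the regularity analysis. The paper proceeds bare-hands: it asserts that $\J^{-1}(\mu)$ is a submanifold for $\mu \neq 0$ and writes down its tangent space explicitly as $\{(\alpha\xx_0 + \beta\yy_0,\, \gamma\xx_0 - \alpha\yy_0)\}$, stating that this coincides with $\ker D\J$; for $m=2,3$ it simply remarks that $D\J$ has full rank. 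You instead compute the rank of $D\J$ structurally via the bifurcation lemma (image of $D\J_p$ equals the annihilator of the isotropy algebra $\gg_p \cong \oo(m-2)$, giving rank $2m-3$), and then invoke the constant-rank theorem on the open set where $\xx,\yy$ are independent to obtain the submanifold property and the identity $T_p\J^{-1}(\mu) = \ker D\J_p$ simultaneously. This buys two things the paper leaves implicit: a conceptual explanation of \emph{why} $m=2,3$ are exactly the regular cases (the equation $2m-3 = \binom{m}{2}$ forces $(m-2)(m-3)=0$), and a global model $\J^{-1}(\mu) \cong SL(2,\R)$ of the fibre, whose tangent space at a point is precisely the paper's explicit three-parameter family. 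Conversely, the paper's direct description is shorter and self-contained, needing neither the bifurcation lemma nor the constant-rank theorem. One small point worth making explicit in your write-up: for $m \geq 4$ a nonzero $\mu$ of rank greater than two has empty preimage, so weak regularity holds vacuously there; your rank-two reduction silently covers only the values actually attained by $\J$, which is also what the paper implicitly does.
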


\noindent
We recall that Marsden--Weinstein reduction~\cite{marsden74} works for
weakly regular momentum values.
A value of a differentiable mapping is called weakly regular if the tangent
space at every point of the pre-image is given by the kernel of the Jacobian
at that point.
Lemma~\ref{lem:momOm} is true in general for faithful cotangent lifted
matrix Lie group actions, see, e.g., 12.1.1 in~\cite{MR}.
For the convenience of the reader and in order to introduce 
our notation we give a direct proof.

\begin{proof}
Let $\xi \in \oo(m)$ be the infinitesimal generator of $g = \exp \xi$. 
The symmetry $\Phi_g$ is the time one mapping of the flow 
of the vector field $X_\xi = ( \xi \xx, \xi \yy)$.
This vector field has the Hamiltonian $J_\xi =  \scap{\xi \xx}{\yy}$.
The Momentum mapping $\J : T^*\R^m \longrightarrow \oo(m)^*$ is defined by
\begin{displaymath}
  \langle \J(\xx,\yy), \xi\rangle_{\oo} = J_\xi(\xx,\yy) \,,
\end{displaymath}
where the pairing
$ \langle A, B \rangle_{\oo} = \frac{1}{2} \trace A B^t$ 
is used in $\oo(m)$.
Using $\trace (\xx \yy^t) = \scap{\xx}{\yy}$ equation~\eqref{eqn:MomMapOm}
is easily verified. 
The components of $\J$ are of course the ordinary 
angular momenta $L_{ik} = x_i y_k - x_k y_i$, and $L_{ik}$ generates
a rotation in the $ik$--plane.

When the symmetry $\Phi_g$ is applied to $(\xx, \yy)$ 
the corresponding image of the Momentum mapping is transformed into 
\begin{equation} \label{eqn:preAd}
    (\J \circ \Phi_g)(\xx,\yy)  =
    (g\xx) (g\yy)^t - (g\yy)(g\xx)^t  =
    g( \xx \yy^t - \yy\xx^t)g^t \,.
\end{equation}
This in turn is just the (co)adjoint action of the group on the
(dual of the) algebra and reads $\Ad^*_{g^{-1}} \mu = g \mu g^{-1}$
for the orthogonal group.
Hence $\J$ is $\Ad^*$--equivariant,
\begin{displaymath}
     \Ad^*_{g^{-1}} \circ \J =   \J \circ \Phi_g  \,.
\end{displaymath}
The value $\mu = 0$ is not weakly regular because $\J^{-1}(0)$ is not a
smooth manifold (instead, it is a cone).
For $\mu \neq 0$ the set $\J^{-1}(\mu)$ is a submanifold of
$T^*\R^m$, and the tanget space
\begin{displaymath}
   T_{(\xx_0, \yy_0)} \J^{-1}(\mu) = \left\{
   (\alpha \xx_0 + \beta \yy_0, \gamma \xx_0 - \alpha \yy_0) \in
   T_{(\xx_0, \yy_0)} (T^*\R^m) : \alpha, \beta, \gamma \in \R
   \right\}
\end{displaymath}
is identical to the kernel of $D \J (\xx_0, \yy_0)$ at any point
$(\xx_0, \yy_0) \in \J^{-1}(\mu)$.
When $m=2,3$ the rank of $D\J$ is full when $\mu \not = 0$, 
and so the value is regular.
\end{proof}

\subsection{Group orbits}
\label{sub:grouporbits}

All symmetry transformations applied to a point $p \in T^*\R^m$ produce
the group orbit $\, G \cdot p = \{ \Phi_g(p) : g \in G \} \,$ of $p$.
It is isomorphic to the quotient of the full group $G$ by the isotropy
subgroup $G_p = \{ g \in G : \Phi_g p = p \}$; the `motions' that fix $p$
have to be divided out.
For convenience we define $O(0)$ to be one point.

\begin{lemma}  \label{lem:Stiefel}
   Let $p=(\xx,\yy) \in \J^{-1}(\mu)$, then the $G$--orbit of $p$ under
   the action $\Phi$ is isomorphic to
   \begin{displaymath}
       G/G_p = O(m)/O(m-d) =  V_{m,d}\qquad \text{(Stiefel manifold)}
   \end{displaymath}
   where $d = \dim \mathrm{span} \{\xx, \yy\}$. 
\end{lemma}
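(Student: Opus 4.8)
The plan is to compute the isotropy subgroup $G_p$ explicitly and then invoke the orbit--stabilizer theorem, which for the compact group $O(m)$ delivers a diffeomorphism between the orbit $G \cdot p$ and the homogeneous space $G/G_p$. First I would read off from the definition of the action~$\Phi$ in~\eqref{eqn:Oaction} that an element $g \in O(m)$ fixes $p = (\xx, \yy)$ if and only if $g\xx = \xx$ and $g\yy = \yy$ hold simultaneously. Since $g$ acts linearly, fixing the two vectors $\xx$ and $\yy$ is equivalent to fixing every vector of the subspace $W = \mathrm{span}\{\xx, \yy\}$, which by hypothesis has dimension~$d$ (so $d \in \{0,1,2\}$, with $d=2$ precisely when $\mu \neq 0$).

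Next I would describe such $g$ geometrically. Because $g$ is orthogonal it preserves the decomposition $\R^m = W \oplus W^\perp$; the requirement that $g$ fix $W$ pointwise forces $g|_W = \id$, and orthogonality then leaves $g|_{W^\perp}$ free to be an arbitrary element of $O(W^\perp) \cong O(m-d)$. Conversely, any orthogonal transformation of $W^\perp$ extended by the identity on $W$ fixes $p$. Hence $G_p \cong O(m-d)$, sitting inside $O(m)$ as the block-diagonal subgroup that acts trivially on~$W$. The orbit--stabilizer theorem then gives $G \cdot p \cong G/G_p = O(m)/O(m-d)$ as smooth homogeneous spaces.

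The step I expect to need the most care is the final identification of this quotient with the Stiefel manifold $V_{m,d}$ of orthonormal $d$--frames, because the vectors $\xx, \yy$ generating the orbit are in general \emph{not} orthonormal, so the orbit is not literally a set of frames. The clean way to resolve this is to fix an orthonormal basis $u_1, \ldots, u_d$ of~$W$ and use the equivariant map $g\,G_p \mapsto (gu_1, \ldots, gu_d)$, which has exactly the same isotropy~$O(m-d)$ and therefore identifies the orbit with the frame manifold $V_{m,d} = O(m)/O(m-d)$; it is this homogeneous-space identification, rather than any equality of point sets, that yields the claim. That $G/G_p \to G\cdot p$ is a genuine diffeomorphism onto an embedded submanifold is automatic from the compactness of~$O(m)$, so no essential analytic obstacle remains. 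As a consistency check I would note that the boundary cases fall out of the same formula: $d=0$ gives $\xx=\yy=0$ with orbit a single point $V_{m,0}$, and $d=1$ gives the sphere $S^{m-1} = V_{m,1}$.
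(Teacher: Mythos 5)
Your proposal is correct and follows essentially the same route as the paper: identify the isotropy subgroup $G_p$ as the copy of $O(m-d)$ acting on the orthogonal complement of $\mathrm{span}\{\xx,\yy\}$, then conclude $G\cdot p \cong G/G_p = O(m)/O(m-d) = V_{m,d}$. The only differences are cosmetic --- you treat all $d$ uniformly where the paper enumerates the cases $d=0,1,2$, and you spell out the frame identification and smoothness points that the paper leaves implicit.
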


\noindent
The Stiefel manifold $V_{m,k} = O(m)/O(m-k)$ 
is the set of $k$--dimensional orthonormal frames in $\R^m$. 

\begin{proof}
The isotropy groups $G_p$ are easily determined. 
There are three cases,
\begin{itemize}
   \item $d = 2: \xx \not\parallel \yy \Longrightarrow G_p = O(m-2)$:
      the generic case,
   \item $d = 1: \xx \parallel \yy \Longrightarrow G_p = O(m-1)$:
      the singular case,
   \item $d = 0: \xx = \yy = 0 \Longrightarrow G_p = O(m)$:
      the trivial case.
\end{itemize}
In the generic case the `motion' takes place in the $2$--dimensional
plane spanned by $\xx$ and~$\yy$.
The isotropy group is the subgroup of rotations that fixes this plane.
In the singular case the `motion' takes place on a line, and $G_p$ is the
subgroup of rotations that fix this line. 
The group orbits of points $p$ in these three cases are diffeomorphic to
\begin{itemize}
   \item $d = 2 : G/G_p = O(m)/O(m-2) = V_{m,2}$:
      Stiefel manifold, $\dim V_{m,2} = 2m-3$,
   \item $d = 1 : G/G_p = O(m)/O(m-1) = V_{m,1} = S^{m-1}$:
      Sphere, $\dim S^{m-1} = m-1$,
   \item $d = 0 : G/G_p = \{ \id \}$:
      Point $(\xx, \yy) = (0,0)$.
\end{itemize}
\end{proof}

\noindent
Now fix the momentum $\mu = \J(p)$. When $p' = \Phi_g (p)$ runs through
the orbit $G \cdot p$ of~$p$ then $\mu' = \J(p') = \Ad^*_{g^{-1}} (\mu)$
by equivariance.
The set of all such momenta is the group orbit $G \cdot \mu$ of~$\mu$ 
under the coadjoint action $\Ad^*$.
It is isomorphic to $G/G_{\mu}$ where the isotropy subgroup is given by
$G_{\mu} = \{ g \in G: \Ad^*_{g^{-1}} \mu = \mu \}$.

\begin{lemma}  \label{lem:Grassmann}
   The $G$--orbit of $\mu \neq 0$ under the coadjoint action
   $\Ad^*_g \mu = g\mu g^{-1}$ is isomorphic to
   \begin{displaymath}
       G/G_{\mu} =  O(m)/ ( SO(2) \times O(m-2) ) = G_{m,2}
       \qquad \text{(Gra{\ss}mann manifold).}
   \end{displaymath}
\end{lemma}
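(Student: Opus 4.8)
The plan is to determine the isotropy subgroup $G_{\mu}$ explicitly and then read off the orbit as the homogeneous space $G/G_{\mu}$, just as in the proof of Lemma~\ref{lem:Stiefel}. Since $\mu = \xx\yy^t - \yy\xx^t \neq 0$ forces $\xx$ and $\yy$ to be linearly independent (the case $d=1$ gives $\yy \parallel \xx$ and hence $\mu = 0$), they span a $2$--plane $P = \mathrm{span}\{\xx,\yy\}$. First I would record the linear-algebraic content of $\mu$: for any $v$ one has $\mu v = \scap{\yy}{v}\xx - \scap{\xx}{v}\yy$, so $\mu$ is a rank--$2$ antisymmetric operator whose image is $P$ and whose kernel is $P^{\perp}$.

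Next I would compute $G_{\mu} = \{g \in O(m) : g\mu g^{-1} = \mu\}$, where $g^{-1} = g^t$ by orthogonality. Because conjugation by $g$ carries the image and kernel of $\mu$ to the image and kernel of $g\mu g^{-1}$, every $g \in G_{\mu}$ must preserve both $P$ and $P^{\perp}$; hence $g$ is block diagonal, $g = \mathrm{diag}(g_P, g_{P^{\perp}})$ with $g_P \in O(2)$ on $P$ and $g_{P^{\perp}} \in O(m-2)$ on $P^{\perp}$. On $P^{\perp}$ the operator $\mu$ vanishes, so $g_{P^{\perp}}$ is entirely unconstrained. On $P$, writing $\mu|_P$ in an orthonormal basis as a nonzero multiple of the standard symplectic matrix, a direct check shows that rotations fix $\mu|_P$ while reflections send $\mu|_P \mapsto -\mu|_P$. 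Therefore $g_P$ must lie in $SO(2)$, and we obtain $G_{\mu} = SO(2)\times O(m-2)$. The orbit--stabilizer theorem (with compactness of the orbit, as already used for the Stiefel manifold) then gives $G\cdot\mu \isom G/G_{\mu} = O(m)/(SO(2)\times O(m-2))$.

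Finally I would identify this quotient with the oriented Grassmannian $G_{m,2}$. Every element of the coadjoint orbit is a rank--$2$ antisymmetric matrix sharing the conjugation-invariant magnitude $c$ of $\mu$, and such a matrix is determined by the oriented $2$--plane on which it is supported: an oriented orthonormal basis $\e_1, \e_2$ of $P$ yields $c(\e_1 \e_2^t - \e_2 \e_1^t)$, which is unchanged under a rotation of the basis within $P$ and changes sign under orientation reversal. Hence the orbit is precisely the set of oriented $2$--planes in $\R^m$, which is the Gra{\ss}mann manifold $G_{m,2}$.

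The one genuinely delicate point --- and the reason the statement produces $SO(2)$ rather than $O(2)$ --- is the orientation computation on $P$: I must verify carefully that a reflection in $O(2)$ reverses the sign of $\mu|_P$ and so is excluded from the isotropy group. This is exactly the $O(2)$ versus $SO(2)$ subtlety flagged in the introduction, and it is what makes $G_{m,2}$ the oriented rather than the unoriented Grassmannian.
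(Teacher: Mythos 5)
Your proof is correct and takes essentially the same approach as the paper's: both determine the isotropy group $G_{\mu} = SO(2) \times O(m-2)$ and read off the orbit as $G/G_{\mu} = G_{m,2}$, with the decisive step in each case being that commuting with the restriction of $\mu$ to its $2$--plane of support excludes reflections and forces the $SO(2)$ factor. The only cosmetic difference is that the paper first conjugates $\mu$ into the normal form $\mathrm{diag}(e,0)$ and computes in block-matrix form, whereas you obtain the block structure invariantly from the fact that any $g \in G_{\mu}$ must preserve the image $P$ and kernel $P^{\perp}$ of $\mu$; your closing identification of the orbit with oriented $2$--planes is a welcome addition that the paper handles by taking $O(m)/(SO(2)\times O(m-2))$ as the definition of $G_{m,2}$.
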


\noindent
The Gra{\ss}mann manifold $G_{m,k} = O(m)/( SO(2) \times O(m-k))$
is the set of $k$--dimensional oriented subspaces of~$\R^m$ and
can alternatively be defined as $SO(m)/( SO(2) \times SO(m-k))$.

\begin{proof}
First consider the case $m=2$.
Let $e = \begin{pmatrix} 0 & 1 \\ -1 & 0 \end{pmatrix} $,
and for $a \in O(2)$ require $a e = e a$.
This implies that actually $a \in SO(2)$.
For $m \geq 3$ note that any nonzero rank~$2$ antisymmetric matrix can be
brought into the form $\mathrm{diag}( e, 0)$.
Hence it is enough to consider this special
$\mu = \mathrm{diag}( e, 0) \in \oo(m)^*$.
The invariance $g \mu g^t = \mu$ then requires, writing $g$ in block form,
\begin{displaymath}
   \begin{pmatrix} a & b\\ c & d    \end{pmatrix}
   \begin{pmatrix}  e & 0 \\ 0 & 0    \end{pmatrix}
= 
   \begin{pmatrix}  e & 0 \\ 0 & 0    \end{pmatrix}
   \begin{pmatrix} a & b\\ c & d    \end{pmatrix} \,.
\end{displaymath}
In addition to $a\in SO(2) $ this gives $b = c = 0$.
Since $g \in O(m)$ the other block $d \in O(m-2)$.
Conversely, any $g$ of the form $\mathrm{diag}( SO(2), O(m-2) )$ leaves
$\mu = \mathrm{diag}( e, 0)$ invariant, hence
$G_{\mu} =  SO(2) \times O(m-2)$. 
The coadjoint orbit  $G \cdot \mu \cong G / G_{\mu}$ is the
Gra{\ss}mannian $G_{m,2} = O(m)/( SO(2) \times O(m-2) )$,
the set of oriented $2$--planes of $\R^m$.
\end{proof}

\noindent
Let $H$ be a $\Phi$--invariant Hamiltonian,
$H \circ \Phi_g = H$ for any $g \in O(m)$.
Then by Noether's theorem the flow~$\varphi_H^t$ of the Hamiltonian
leaves $\J$ invariant, $\J \circ \varphi_H^t = \J$.
Hence, each component of $\J$ is a constant of motion.
All the components $L_{ik} = x_i y_k - x_k y_i$ are integrals of motion,
but only $2m-3$ of them are independent, $\rank \J = 2m-3$. 
Even when all components are independent (as for $m=3$) 
they do not all commute. 

As an example consider the general Hamiltonian~\eqref{eqn:HamCF} with
$m=3$, or the particular case of the motion in a central force field.
As usual for $m=3$ the dual of the Lie algebra $\oo(3)^*$ is identified
with $\R^3$.
The generic initial condition $(\xx, \yy)$ satisfies
$\xx \times \yy \neq 0$ so that the motion takes place in the plane
through the origin perpendicular to the angular momentum $\xx \times \yy$.
This plane is fixed by the reflection $g \in G_p = O(1)$.
The group orbit of a point $p = (\xx, \yy)$ is $O(3)/G_p = SO(3)$.
The angular momentum $\mu$ is not constant along this group orbit.
It is only constant when $g \in G_{\mu} = SO(2) \times O(1)$, i.e.\ when
we fix the axis given by the angular momentum.
When other rotations act on $\mu$ the angular momentum takes on all
possible values in the coadjoint orbit $O(3)/G_{\mu} = S^2$. 
This is the space of oriented planes in $\R^3$. 
In the singular case the angular momentum vanishes and the line of motion
is fixed by rotations about this line, while in the trivial case the origin
is fixed by every rotation.
In both cases the coadjoint orbit is not the correct notion for the
analysis of the geometry since $\mu = 0$, see the singular reduction below.

\subsection{Regular reduction}
\label{sub:regularreduction}

The regular reduction procedure is to fix a weakly regular
momentum value $\mu \in \gg^*$ and factor out the $G_{\mu}$--action
from the set $\J^{-1}(\mu)$ of points that have fixed
momentum~$\mu$ and form the quotient 
\begin{displaymath}
   P_{\mu} = \J^{-1}(\mu) / G_{\mu} \,.
\end{displaymath}
For an abelian group we have $G_{\mu} = G$, while in the non-commutative
case only dynamics associated to group elements $g$ whose coadjoint
action preserves $\mu$ is factored out, hence $G_{\mu}$.
The reduced symplectic manifold $P_{\mu}$ then is the
space of $G_{\mu}$--orbits in $\J^{-1}(\mu)$.
The dimension of the reduced space is given by
\begin{displaymath}
   \dim P_{\mu} = \dim P - 2 \dim G/G_p + \dim G/G_{\mu}
\end{displaymath}
where $P$ is the original phase space.
Thus, for $G = O(m)$ and $P = T^*\R^m$ we find
\begin{displaymath}
    \dim P_{\mu} = 2m - 2 (2m-3) + (2m-4) = 2 \,.
\end{displaymath}
The regular reduction for the $O(m)$--action $\Phi$ is made effective by
giving concrete models for the abstract reduced phase spaces.

\begin{lemma}  \label{lem:RegRedOm}
   The reduced space $P_{\mu}$ for $\mu \neq 0$ is diffeomorphic to
   the open half plane in $\R^2$ with global symplectic co-ordinates
   \begin{displaymath}
       \xi = || \xx || > 0 \,, \quad
       \eta = \scap{\xx}{\yy}/||\xx|| \in \R
   \end{displaymath}
   so that $[\xi, \eta] = 1$.
   The $\Phi$--invariant Hamiltonian  \eqref{eqn:HamCF} becomes
   \begin{equation} \label{eqn:hatH}
       \tilde{H}(\xi, \eta, J) =
       \hat{H}(\xi^2, \eta^2 + \frac{J^2}{\xi^2}, \xi \eta)
       \,,
   \end{equation}
   where the total angular momentum $J = ||\J||_{\gg^*} = \sqrt{W}$
   is given by
   \begin{displaymath}
       W  = \frac{1}{2} \trace \J \J^t = \sum_{1 \leq i < k \leq m} L_{ik}^2
       \,.
   \end{displaymath}
\end{lemma}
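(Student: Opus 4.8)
The plan is to realise the abstract reduced space $P_\mu=\J^{-1}(\mu)/G_\mu$ through the three basic invariants $\scap{\xx}{\xx}$, $\scap{\yy}{\yy}$, $\scap{\xx}{\yy}$ of the $\Phi$-action, observing that fixing $\mu\neq 0$ removes exactly one of their three degrees of freedom. The first step is the Gram (Lagrange) identity $W=\frac12\trace\J\J^t=\scap{\xx}{\xx}\scap{\yy}{\yy}-\scap{\xx}{\yy}^2$, which follows by expanding $\sum_{i<k}(x_iy_k-x_ky_i)^2$. On $\J^{-1}(\mu)$ the left-hand side equals the fixed value $J^2$, so with $\xi=\|\xx\|$ and $\eta=\scap{\xx}{\yy}/\|\xx\|$ one gets $\scap{\yy}{\yy}=\eta^2+J^2/\xi^2$. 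Since $\mu\neq 0$ forces $\xx\neq 0$ (otherwise $\J=\xx\otimes\yy-\yy\otimes\xx=0$), the quantity $\xi$ is strictly positive and $\eta$ is well defined, so $(\xi,\eta)$ takes values in the open half plane $\{\xi>0\}$.

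Next I would show that $(\xi,\eta)$ descends to a diffeomorphism from $P_\mu$ onto this half plane. Both $\xi$ and $\eta$ are $\Phi$-invariant, hence $G_\mu$-invariant, so they pass to the quotient. For surjectivity I would construct a representative explicitly: given $(\xi_0,\eta_0)$, put $\xx=\xi_0\e_0$ and $\yy=\eta_0\e_0+(J/\xi_0)\e_1$, whose only nonzero angular momentum is $L_{01}=J$; after a suitable $\Ad^*$-rotation this point lies in $\J^{-1}(\mu)$ and still carries the prescribed values $(\xi_0,\eta_0)$. For injectivity I would invoke that the Gram matrix is a complete invariant of the simultaneous $O(m)$-action on vector pairs (any two pairs with equal Gram matrix are related by some $g\in O(m)$, extending the isometry between the spanned planes): two points of $\J^{-1}(\mu)$ with equal $(\xi,\eta)$ share all three basic invariants, the third via the identity above, and hence lie in one $\Phi$-orbit, $p_2=\Phi_g(p_1)$.

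The main obstacle sits precisely here, because the quotient is by $G_\mu$ and not by all of $G$, so I must upgrade ``same $\Phi$-orbit'' to ``same $G_\mu$-orbit''. The equivariance established in Lemma~\ref{lem:momOm} resolves this cleanly: from $\mu=\J(p_2)=\Ad^*_{g^{-1}}\J(p_1)=\Ad^*_{g^{-1}}\mu$ we read off $g\in G_\mu$, so $p_1$ and $p_2$ represent the same point of $P_\mu$ and the map is injective. Smoothness of the inverse follows from the explicit construction above, so $(\xi,\eta)$ is a global chart.

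Finally, for the symplectic and Hamiltonian claims I would compute the canonical bracket directly on $T^*\R^m$. From $\partial\xi/\partial x_i=x_i/\xi$ and $\partial\eta/\partial y_i=x_i/\xi$ (with $\partial\xi/\partial y_i=0$) one finds $\{\xi,\eta\}=\sum_i x_i^2/\xi^2=1$; as both functions are $\Phi$-invariant this descends to $[\xi,\eta]=1$ on $P_\mu$. This relation does double duty: it forces $\dee\xi$ and $\dee\eta$ to be pointwise independent, confirming that the bijection above is a local (hence global) diffeomorphism, and it identifies $(\xi,\eta)$ as global Darboux coordinates with reduced form $\dee\xi\wedge\dee\eta$. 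Substituting $\scap{\xx}{\xx}=\xi^2$, $\scap{\xx}{\yy}=\xi\eta$ and $\scap{\yy}{\yy}=\eta^2+J^2/\xi^2$ into \eqref{eqn:HamCF} then yields $\tilde H(\xi,\eta,J)=\hat H(\xi^2,\eta^2+J^2/\xi^2,\xi\eta)$, as claimed.
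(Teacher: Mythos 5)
Your proposal is correct and follows essentially the same route as the paper's proof: the same Lagrange identity $W=\|\xx\|^2\,\|\yy\|^2-\scap{\xx}{\yy}^2$ used to eliminate $\|\yy\|^2$ from $\hat H$, the same canonical bracket computation giving $[\xi,\eta]=1$, and the same substitution producing \eqref{eqn:hatH}. The only difference is one of detail, not of method: you spell out injectivity and surjectivity of the induced map onto the half plane (completeness of the Gram matrix as an $O(m)$--invariant of vector pairs, upgraded from a $G$--orbit to a $G_\mu$--orbit statement via $\Ad^*$--equivariance), a point the paper leaves implicit in its closing remark that the fibres of the reduction mapping are the Stiefel manifolds $V_{m,2}$.
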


\begin{proof}
Since $\mu \neq 0$ we have $\xi > 0$.
The half plane $\xi > 0$ is of course diffeomorphic
(even symplectomorphic) to $\R^2$.
The co-ordinates $(\xi, \eta)$ are symplectic since
\begin{displaymath}
    [ \xi, \eta ] = \sum_{i=1}^m
    \frac{\partial \xi}{\partial x_i} \frac{\partial \eta}{\partial y_i}
    = \frac{1}{||\xx||^2} \sum_{i=1}^m x_i^2 = 1 \,.
\end{displaymath}
The identity
\begin{equation} \label{eqn:Jxy}
   W = ||\xx||^2  ||\yy||^2 - \scap{\xx}{\yy}^2 = (||\yy||^2 - \eta^2) \xi^2
\end{equation}
allows to eliminate $||\yy||^2$ from $\hat{H}$ and drop it to $\tilde{H}$
on~$P_{\mu}$.
The fibre of the reduction mapping from the submanifold of constant total
angular momentum~$j$,
\begin{displaymath}
   J^{-1}(j) = \left\{ (\xx, \yy) \in T^*\R^m : J(\xx, \yy) = j \right\} 
\end{displaymath}
to $(\xi, \eta)$ is the Stiefel manifold $V_{m,2}$ found before. 
\end{proof}

\noindent
For $m=2$ the above reduction is equivalent to introducing the radius of
polar co-ordinates in the plane. For fixed nonzero angular momentum 
radius zero is impossible, the line $\xi = 0$ is not part of the 
reduced space, and the reduction is regular.

\begin{remark}  \label{rem:polynomial}
The function $W$ is a polynomial in the basic invariants, as opposed to
$J = \sqrt{W}$.
The square root is not differentiable at $0$ and this difference between 
$J$ and $W$ becomes important in the singular reduction later on.
From an algebraic point of view $W$ is more fundamental, but
from the symplectic point of view $J$ is more fundamental, since $J$ defines
an action outside $\mu = 0$, i.e.\ $J$ is a Hamiltonian with
$2 \pi$--periodic flow when $\mu \neq 0$.
In case $m=2$ one may choose to reduce only the $SO(2)$--action instead
of the full $O(2)$--action.
This yields a fourth basic invariant~$L_{12}$ and the Hilbert basis
ceases to be free as the basic invariants become related by the
syzygy $L_{12}^2 + \scap{\xx}{\yy}^2 = ||\xx||^2  ||\yy||^2$, in
particular $W = L_{12}^2$.
The reason $m=2$ is special  is that $J$ can be redefined as $J = \J = L_{12}$,
which {\em is} polynomial and provides a smooth passage through the
critical value $\mu = 0$.
\end{remark}

\noindent
Since we have many more integrals (namely $m(m-1)/2$, of which $2m-3$
are independent) the Hamiltonian $H$ is superintegrable for $m \geq 3$.
For a weakly regular value of the Energy--Momentum mapping 
\begin{equation} \label{eqn:EM}
   (H, \J) : T^*\R^m \longrightarrow \R \times \oo(m)^*
\end{equation}
the pre-image is a two-dimensional set.
It is a two-torus when compact, and there are two actions associated to it.
One action is the total angular momentum~$J$, the other comes from the
reduced Hamiltonian.
All the other $2m-4$ dimensions do not take part in the dynamics.
The semi-local situation is described by the following result.

\begin{lemma} {\em (Generalized action-angle co-ordinates)} \label{lem:nekho}
   The symplectic form on $T^*\R^m$ locally near a weakly regular
   value of the Energy--Momentum mapping \eqref{eqn:EM} decomposes into 
   \begin{displaymath}
      \Omega = \Omega_{\mu} + \dee \psi \wedge\dee J + \Omega_{G \cdot \mu}
   \end{displaymath}
   where $\Omega_{\mu} = \dee \xi \wedge \dee \eta$ is the symplectic
   structure on the reduced space $P_{\mu}$, $\psi$ is conjugate to the
   action $J$, and $ \Omega_{G \cdot \mu}$ is the symplectic
   structure on the coadjoint orbit space~$G_{m,2}$.
   The reduced Hamiltonian \eqref{eqn:hatH} only depends on the
   co-ordinates $(\xi, \eta, J)$.
   For a generic invariant Hamiltonian a system of generalized action-angle
   co-ordinates in the sense of~\cite{nekhoroshev72} is obtained by
   the action-angle co-ordinates of the reduced one-degree-of-freedom
   system in $(\xi,\eta)$ and by $(\psi, J)$ together with any set of
   symplectic co-ordinates on~$G_{m,2}$.
\end{lemma}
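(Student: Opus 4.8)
The plan is to recognise this statement as the specialisation to our $O(m)$--symmetric system of Nekhoroshev's generalised action--angle theorem for non--commutatively (super)integrable systems~\cite{nekhoroshev72,fomenko78,Fasso04}. Near a weakly regular value of~\eqref{eqn:EM} the common level set of $(H,\J)$ is an isotropic torus of dimension two, the generic compact fibre~$\T^2$; the two periodic flows spanning it are produced by the reduced one--degree--of--freedom Hamiltonian~\eqref{eqn:hatH} in $(\xi,\eta)$ and by the total angular momentum~$J$. Accordingly I would split the $2m$ dimensions of $T^*\R^m$ into the two--dimensional reduced space $P_\mu$, the two--dimensional pair $(\psi,J)$, and the remaining $2m-4$ dimensions, which I claim realise the coadjoint orbit $G_{m,2}$ of Lemma~\ref{lem:Grassmann}, whose dimension is indeed $2m-4$, so that the three blocks add up to $2m$.

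First I would assemble the three symplectic blocks from the results already established. Lemma~\ref{lem:RegRedOm} provides the conjugate pair $(\xi,\eta)$ with $\Omega_\mu=\dee\xi\wedge\dee\eta$ and exhibits the reduced Hamiltonian~\eqref{eqn:hatH} as a function of $(\xi,\eta,J)$ alone; the Arnold--Liouville theorem applied to this one--degree--of--freedom system furnishes the action--angle pair of the reduced motion. By Remark~\ref{rem:polynomial} the function $J=\|\J\|_{\gg^*}$ generates a $2\pi$--periodic flow away from $\mu=0$, hence is a genuine action with a conjugate angle $\psi$ satisfying $[\psi,J]=1$ and contributing $\dee\psi\wedge\dee J$. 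Finally, since~\eqref{eqn:hatH} depends only on $(\xi,\eta,J)$ and not on the direction $\hat\mu=\mu/\|\mu\|$, moving $\hat\mu$ over the coadjoint orbit $G\cdot\mu\cong G_{m,2}$ sweeps out $2m-4$ directions that are transverse to the invariant~$\T^2$ and non--dynamical for~$H$; these carry the Kirillov--Kostant--Souriau form $\Omega_{G\cdot\mu}$, the restriction to the orbit of the Lie--Poisson structure on $\oo(m)^*$ pulled back by~$\J$.

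Next I would verify that the three blocks are $\Omega$--orthogonal by computing the mixed brackets. The coordinates $\xi$ and $\eta$ are $O(m)$--invariant, so they Poisson--commute with every component $L_{ik}$ of~$\J$ (the flow of $L_{ik}$ is exactly the symmetry under which they are invariant); consequently $[\xi,J]=[\eta,J]=0$ and $\xi,\eta$ commute with every coordinate on $G_{m,2}$, which decouples the reduced block. For the pair $(\psi,J)$ I would use that $J=\|\J\|$ is the pull--back under~$\J$ of a Casimir of the Lie--Poisson bracket, whence it Poisson--commutes with every function of~$\J$; concretely its Hamiltonian vector field is the infinitesimal generator of $\mu/\|\mu\|\in\gg_\mu$, the rotation in the oriented plane represented by~$\mu$, which fixes~$\mu$ itself. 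Thus $J$ commutes with all orbit coordinates and its flow is transverse to $G_{m,2}$, leaving the coadjoint directions to carry precisely $\Omega_{G\cdot\mu}$.

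The hard part is the construction of the angle $\psi$ and the proof that it can be chosen to decouple cleanly from the coadjoint block, i.e.\ that $[\psi,\,\cdot\,]$ annihilates the orbit coordinates. The subtlety is the familiar reconstruction (connection) term: the Kirillov--Kostant--Souriau form on $G\cdot\mu$ scales with $\|\mu\|=J$, so the naive flow--time angle does not by itself produce a closed, block--diagonal form. The resolution, which is precisely the content of Nekhoroshev's theorem, is to pick a transversal to the $J$--orbits that is flat for the associated $S^1$--connection; the two structural facts established above --- that $\xi,\eta$ are $G$--invariant and that the $J$--flow fixes~$\mu$ pointwise rather than merely preserving its coadjoint orbit --- are exactly what makes the $(\psi,J)$--plane choosable $\Omega$--orthogonally to both the reduced space and the orbit, so that the normal form $\Omega=\Omega_\mu+\dee\psi\wedge\dee J+\Omega_{G\cdot\mu}$ holds on a neighbourhood of the fibre. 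Replacing $(\xi,\eta)$ by the action--angle variables of~\eqref{eqn:hatH} and adjoining any Darboux chart on $G_{m,2}$ then yields the asserted generalised action--angle coordinates.
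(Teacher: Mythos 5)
Your architecture is sound, but your route is genuinely different from the paper's. The paper proves the lemma by direct computation: it writes Hamilton's equations for $J$ in each $(x_i,y_i)$--plane as a linear system with matrix $SM$, checks $(SM)^2 = -\id$ using $\det M = 1$ (i.e.\ \eqref{eqn:Jxy}), and concludes that the flow of $J$ is the rigid rotation $\id\cos t + SM\sin t$, hence $2\pi$--periodic; the angle $\psi$ is the flow time, and the ignorable coordinates are then identified with $V_{m,2}/S^1 = G_{m,2}$, the coadjoint orbit carrying its symplectic structure (cited from \cite{MR}). You instead argue structurally: invariance of $\xi,\eta$ gives commutation with all $L_{ik}$, the Casimir property of $W$ gives commutation of $J$ with all collective functions, and the final normal form is delegated to Nekhoroshev's theorem once the bi-fibration data are in place. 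Your route is more conceptual --- in particular you correctly flag the connection-term subtlety (the orbit symplectic form scales with $J$), which the paper passes over in silence --- while the paper's computation is self-contained and yields the concrete description of the $J$--orbits (circles in the plane spanned by $\xx$ and $\yy$, of radius $\|\xx\|$) that is reused later, e.g.\ in Theorem \ref{thm:DiracSn} and in the Proposition on generalized action-angle co-ordinates for the reduced Neumann system.

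One joint needs repair. You justify the $2\pi$--periodicity of the flow of $J$ by citing Remark \ref{rem:polynomial}. That remark merely \emph{asserts} periodicity; in the paper's logical order its only substantiation is the computation inside the proof of this very lemma, so as written your citation is circular. Your own third paragraph already contains the correct replacement: the Hamiltonian vector field of $J = \J^*\|\cdot\|_{\gg^*}$ at a point with momentum $\mu\neq 0$ is the infinitesimal generator of the Lie algebra element $\mu/\|\mu\| \in \gg_{\mu}$ (the standard collective-Hamiltonian fact), and since $\mu/\|\mu\|$ is conjugate to $\mathrm{diag}(e,0)$ in the notation of Lemma \ref{lem:Grassmann}, the one-parameter group $\exp(t\mu/\|\mu\|)$ consists of rotations of a fixed oriented $2$--plane, which is manifestly $2\pi$--periodic. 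Promote that observation from a side remark to the primary argument (citing or proving the collective-Hamiltonian lemma) and the circularity disappears; that step is then exactly the abstract counterpart of the paper's $SM$--computation.
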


\begin{proof}
Let us first of all check that $J$ is an action, i.e.\ that its flow 
is $2\pi$--periodic. 
Note that using \eqref{eqn:Jxy}
\begin{displaymath}
   \frac{1}{2} \nabla_{\yy} J^2 =  
   {||\xx||^2} \yy -  \scap{\xx}{\yy} \xx
   \quad \mbox{and} \quad
   \frac{1}{2} \nabla_{\xx} J^2 =  
   {||\yy||^2} \xx  - \scap{\xx}{\yy} \yy \,.
\end{displaymath}
For each component Hamilton's equations for $J$ therefore are
\begin{displaymath}
   \begin{pmatrix} \dot x_i \\ \dot y_i \end{pmatrix}
   = S M 
   \begin{pmatrix}  x_i \\  y_i \end{pmatrix}, \qquad
   S = \begin{pmatrix} 0 & 1 \\ -1 & 0 \end{pmatrix}, \quad
   M = \frac{1}{J}
   \begin{pmatrix}
      ||\yy||^2 & -\scap{\xx}{\yy} \\
      - \scap{\xx}{\yy} & ||\xx||^2
   \end{pmatrix} \,.
\end{displaymath}
Note that $SM$ is an involution, $(SM)^2 = -\id$, which follows from 
$\det M = 1$, see \eqref{eqn:Jxy}.
Hence the eigenvalues of $SM$ are $\pm {\rm i}$, and the
$2\pi$--periodic flow is 
\begin{displaymath}
   \Phi_J^t (x_i, y_i) =
   \left( \id \cos t + SM  \sin t \right ) 
   \begin{pmatrix} x_i \\ y_i \end{pmatrix} \,,
\end{displaymath}
for every $i = 1, \ldots, m$.
The projection of the orbit to configuration space (or momentum space)
is a circle in the plane spanned by the initial $\xx$ and $\yy$, with
radius $||\xx||$ (or $||\yy||$). 
This follows because $(-\scap{\xx}{\yy} \xx + ||\xx||^2 \yy)/J $ is
orthogonal to $\xx$ and with length $||\xx||$, similarly for the second
row of $SM$ and $\yy$.
The flow of $J$ yields the angle $\psi$ conjugate to~$J$.
The computation of the 2nd action depends on the Hamiltonian.

Since the reduced Hamiltonian has one degree of freedom and depends on
the action $J$ all the other co-ordinates belong to the symplectic
manifold of co-ordinates that do not appear at all in the Hamiltonian.
This symplectic manifold of ignorable symplectic co-ordinates is given by
removing the periodic flow of $J$ from the fibre of the reduction mapping,
$V_{m,2}/S^1  = G_{m,2}$.
Hence, the coadjoint orbit $G \cdot \mu \cong G_{m,2}$ with its symplectic
structure (see, e.g., \cite{MR}) is this symplectic manifold.
\end{proof}

\noindent
Generalized action-angle co-ordinates in the sense of~\cite{nekhoroshev72}
are obtained from this construction when $H$ is a generic $O(m)$--invariant
function, so that the generic motion has two frequencies.
For special $H$ almost every orbit is periodic, and in this case the
construction needs to be modified. 

In the picure developed in~\cite{Fasso96, Fasso04} there is a two-dimensional 
meadow of actions (which globally can be replaced by the image of the
Energy--Casimir mapping, see below), on which there are flowers whose petals
are two-tori parametrised by the angles conjugate to the actions, and the
centre of the flower carrying the petals is the coadjoint orbit $G \cdot \mu$.
More precisely, the pre-image
\begin{displaymath}
   {\cal F}_j  = J^{-1}(j) = 
   \bigcup_{\tilde \mu \in G \cdot \mu} \J^{-1} (\tilde \mu)
   \,, \quad j = || \mu ||
\end{displaymath}
is mapped onto the centre of the flower, $\J({\cal F}_j) = G_{m,2}$.
Therefore the flower is a bundle over~$G_{m,2}$, and following
Nekhoroshev~\cite{nekhoroshev72} the fibre of this mapping is~$\T^2$.
Since the Nekhoroshev actions are only locally defined, some
flowers are still missing in the global picture.

\subsection{Singular reduction}
\label{sub:singularreduction}

All we have said so far is only valid for weakly regular values.
To understand what happens near $\mu = 0$ we need to use singular reduction. 
Note that setting $J = 0$ in the reduced Hamiltonian $\hat{H}$~\eqref{eqn:hatH}
does correctly describe reduced motion on a line in configuration space. 
However, if fails to describe how the neighbouring reduced systems with two
degrees of freedom and $\mu \neq 0$ limit to this special case.

\begin{lemma}  \label{lem:SingRedOm}
   The reduced phase space $P_{\mu}$ is the image of the momentum level
   set $\J^{-1}(\mu)$ under the Hilbert mapping
   \begin{displaymath}
   \begin{array}{cccc}
      {\chi} : & T^*\R^m & \longrightarrow & \R^3  \\
      & (\xx, \yy) & \mapsto & (V, T, S)
   \end{array}
   \end{displaymath}
   where we put $V = \frac{1}{2} ||\xx||^2$, $T = \frac{1}{2}||\yy||^2$
   and $S = \scap{\xx}{\yy}$.
   When $\mu \neq 0$ then $P_{\mu} \simeq \R^2$, given by the sheet
   $V \geq 0$, $T \geq 0$ of the two-sheeted hyperboloid
   \begin{equation} \label{eqn:JVTS}
      2 VT - \frac{1}{2} S^2 = \frac{1}{2} j^2,
   \end{equation}
   where $j = ||\mu||_{\gg^*}$
   For $\mu = 0$ it is the half cone $2VT = \frac{1}{2}S^2$,
   with $V \geq 0, T \geq 0$.
   The fibres of $\chi$ are the Stiefel manifolds $V_{m,d}$ with $d=2$
   in the regular case and $d=1$ or $d=0$ for $\mu = 0$.
   The reduced phase space $P_{\mu}$, $\mu \neq 0$ fixed, is the
   symplectic leaf of the Poisson structure on $\R^3$ with
   co-ordinates $(V,T,S)$ satisfying the bracket relations
   \begin{displaymath}
      [ V, T ] = S , \quad 
      [ V, S ] = 2 V , \quad
      [ T, S ] = -2 T
   \end{displaymath}
   of $\mathfrak{sl}(2, \R)$.
   The function $W=J^2$ on~$T^*\R^m$ is a Casimir of this bracket.
   The reduced Hamiltonian $\hat{H}$ is a function of $V$, $T$ and $S$ only,
   reading $\hat{H} = T + U(V)$.
\end{lemma}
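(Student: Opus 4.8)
The plan is to obtain the reduced space not from an explicit local slice but from invariant theory, in the spirit of singular (orbit-space) reduction. The starting observation is that the three functions $V=\tfrac12\scap{\xx}{\xx}$, $T=\tfrac12\scap{\yy}{\yy}$, $S=\scap{\xx}{\yy}$ form a free Hilbert basis for the $O(m)$-action — this is precisely the content of \eqref{eqn:HamCF}. Hence the Hilbert map $\chi$ separates $G$-orbits and realizes the full orbit space $T^*\R^m/O(m)$ as its semialgebraic image $\chi(T^*\R^m)\subset\R^3$, and every $\Phi$-invariant object descends along $\chi$. The reduced spaces will then appear as the symplectic leaves of the induced Poisson structure, cut out by the momentum.

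Next I would compute that induced structure. Since $\chi$ is a Poisson map onto $\R^3$ equipped with the pushed-forward bracket, it suffices to evaluate the canonical brackets of the generators, a one-line calculation yielding $[V,T]=S$, $[V,S]=2V$, $[T,S]=-2T$, the $\mathfrak{sl}(2,\R)$ relations. Its quadratic Casimir is $W=4VT-S^2$, which one checks directly Poisson-commutes with $V,T,S$; by \eqref{eqn:Jxy} this $W$ equals $J^2$, establishing the asserted Casimir. To cut out the image I would use that $V,T\geq0$ automatically, while Cauchy–Schwarz $\scap{\xx}{\yy}^2\leq\scap{\xx}{\xx}\scap{\yy}{\yy}$ reads $W\geq0$; conversely for $m\geq2$ a short explicit frame realizes every triple with $V,T\geq0$ and $W\geq0$, so $\chi(T^*\R^m)=\{W\geq0,\ V,T\geq0\}$. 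Intersecting with $W=j^2$ gives, for $j\neq0$, the sheet $V,T>0$ of the two-sheeted hyperboloid \eqref{eqn:JVTS}, and for $j=0$ the half-cone $4VT=S^2$.

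The key step is identifying the abstract quotient $P_\mu=\J^{-1}(\mu)/G_\mu$ with $\chi(\J^{-1}(\mu))$. Being $G$-invariant, $\chi$ is $G_\mu$-invariant and descends to $P_\mu$; injectivity is what must be argued. If $(\xx,\yy),(\xx',\yy')\in\J^{-1}(\mu)$ share the same $(V,T,S)$ they have equal Gram matrices, so $(\xx',\yy')=(g\xx,g\yy)$ for some $g\in O(m)$, and the equivariance \eqref{eqn:preAd} forces $\Ad^*_{g^{-1}}\mu=\mu$, i.e.\ $g\in G_\mu$. Surjectivity onto the entire leaf follows because every antisymmetric rank-$2$ tensor of norm $j$ lies on the coadjoint orbit of $\mu$ (Lemma~\ref{lem:Grassmann}), so any admissible $(\xx,\yy)$ can be rotated into $\J^{-1}(\mu)$. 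The fibres of $\chi$ are then single $O(m)$-orbits, which by Lemma~\ref{lem:Stiefel} are the Stiefel manifolds $V_{m,d}$ with $d=\dim\mathrm{span}\{\xx,\yy\}$: $d=2$ on the regular leaf and $d=1$ or $0$ on the cone. For the symplectic and Hamiltonian claims: on $W=j^2>0$ the induced bracket matrix has rank $2$, hence is nondegenerate, so each sheet is a symplectic leaf carrying the reduced form (consistent with weak regularity of $\mu\neq0$, Lemma~\ref{lem:momOm}); and for the mechanical Hamiltonian $H=\tfrac12\scap{\yy}{\yy}+V(\xx)$ the kinetic term is already $T$ while an invariant potential depends on $\scap{\xx}{\xx}$ alone, giving $\hat H=T+U(V)$ with no $S$-dependence.

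The step I expect to be the main obstacle is the singular value $\mu=0$. There $\J^{-1}(0)$ is a cone rather than a manifold, the orbit type jumps across the fibres $V_{m,2}$, $V_{m,1}$ and the point $V_{m,0}$, and the image degenerates to the half-cone whose vertex is a $0$-dimensional leaf. Making rigorous that the orbit-space Poisson structure stratifies into symplectic leaves exactly as stated — and in particular that the cone vertex is the reduced space over the zero-momentum line — is where I would rely on the general singular-reduction theory (the Sjamaar–Lerman stratification, or Arms–Cushman–Gotay) rather than on the regular Marsden–Weinstein theorem used for $\mu\neq0$.
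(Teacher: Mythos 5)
Your proposal is correct, and its skeleton is the same as the paper's: compute the canonical brackets of the invariants $V,T,S$ to obtain the $\mathfrak{sl}(2,\R)$ relations, identify $W = 4VT - S^2$ as the Casimir, and read off the symplectic leaves as the sheets of the quadrics $W = j^2$. Where you differ is in completeness: the paper's proof is essentially just the bracket computation, the remark that the Jacobi identity is inherited, and the assertion that the leaves given by \eqref{eqn:JVTS} are the reduced spaces; it also offers a slicker route to the Casimir property than your direct check, namely that $W$ factors through $\oo(m)^*$ as the $\Ad^*$--invariant $\sum L_{ik}^2$. Your additional arguments --- the Gram-matrix step (equal invariants imply $(\xx',\yy') = (g\xx, g\yy)$ for some $g \in O(m)$, and $\Ad^*$--equivariance then forces $g \in G_{\mu}$, so the fibres of $\chi$ restricted to $\J^{-1}(\mu)$ are exactly $G_{\mu}$--orbits), the coadjoint-orbit argument for surjectivity, and the Cauchy--Schwarz/realizability description of the image of $\chi$ --- constitute the honest proof that $\chi$ induces a bijection from $P_{\mu} = \J^{-1}(\mu)/G_{\mu}$ onto the stated sheet, which is precisely the first assertion of the lemma and is taken for granted in the paper. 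So your proof buys rigour on the identification of abstract quotient with image, at the cost of length; the paper buys brevity by leaving that identification implicit.

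One correction to your final paragraph: the obstacle you anticipate at $\mu = 0$ is not actually there, and no appeal to Sjamaar--Lerman or Arms--Cushman--Gotay is needed. Your own argument covers this case verbatim: since the coadjoint action fixes $0$ we have $G_0 = G$, the Gram-matrix step still separates orbits inside $\J^{-1}(0)$, and your realizability argument restricted to $W = 0$ shows the image is exactly the half cone; hence $P_0 = \J^{-1}(0)/G$ is identified with the half cone as a semialgebraic variety, which is all the lemma claims (the paper likewise treats $W = 0$ directly, noting only that the reduced space is ``half of a cone'', a semi-algebraic variety). That the vertex is a separate zero-dimensional leaf while the punctured cone is a single two-dimensional leaf is immediate from your own rank computation, since the structure matrix of the $\mathfrak{sl}(2,\R)$ bracket vanishes exactly at $V = T = S = 0$ and has rank $2$ elsewhere.
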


\begin{proof}
For the computation of the brackets it is enough to compute e.g.,
$[ x_i^2, x_i y_i ] = 2 x_i^2$, etc.
The induced bracket on $\R^3$ with co-ordinates $(S,T,V)$ (for which
we use the same notation $[ .. \, , .. ]$) inherits the Jacobi identity.
While one may straightforwardly compute as well that $W$ is a Casimir
with respect to this bracket, this also follows from
$W : T^*\R^m \longrightarrow \R$ factoring through~$\oo(m)^*$ by means of
the $\Ad^*$--invariant expression $W = \sum L_{ik}^2$.
The symplectic leaves are given by the quadratic form \eqref{eqn:JVTS}, 
with the given relations.
For $W=0$ the reduced space is not a smooth manifold, but half of a cone
(in general such a reduced phase space is a semi-algebraic variety).
\end{proof}

\begin{remark}  \label{rem:SingRedByQuotient}
The singular reduction for $\mu = 0$ leads to a reduced system on a non-smooth
manifold, with a singular point at the tip of the cone $4 VT = S^2$.
An alternative description of this singular reduced phase space is
obtained by first restricting to any two-dimensional subspace that is
invariant under the dynamics of~$H$, e.g.\ the $(x_1, y_1)$--plane defined by
$x_i=y_i=0$ for $i \ge 2$. 
The $O(m)$--action $\Phi$ has a residual $\Z_2$--action on this plane given by
$\Phi(\pi) : (x_1,y_1) \mapsto (-x_1,-y_1)$.
Therefore the singular reduced phase space can also be viewed as $\R^2 / \Z_2$.
This is e.g.\ the half-plane $y_1 \ge 0$ with the boundary $y_1=0$ identified
with itself by means of $(x_1,0) \sim (-x_1,0)$, which again gives a half cone.
The fixed point of the residual $\Z_2$--action generated by $\Phi(\pi)$ is the
origin $x_1=y_1=0$ and projects to the singular point of the reduced phase
space.
\end{remark}

\noindent
The relation to the regular reduction in Lemma~\ref{lem:RegRedOm} is that 
$(\xi, \eta) = (\sqrt{2V}, S/\sqrt{2V})$ are symplectic co-ordinates on
any regular symplectic leaf.
It is, however, not a co-ordinate system for $W = 0$.
When $\mu = 0$ then $W=0$ and $\xx \parallel \yy$ for every
$p \in \J^{-1}(0)$.
Hence the fibre over the cone $P_0$ is $V_{m,1} = S^{m-1}$.
There is no dynamics on this sphere.
When $m = 2$ this $S^1$ is part of a regular invariant two-dimensional torus 
for almost all values of the energy.
For $m \geq 3$ this is not possible since in this case no $S^{m-1}$--bundle
over~$S^1$ is homeomorphic to an $m$--torus.
The upshot is that monodromy can be defined when $m=2$, but not when
$m \geq 3$, see below for more details.

Combining the previous results we can now describe the global meadow 
of actions, together with its maximal dynamical tori and symplectic manifolds
in which no dynamics takes place in the regular and singular cases.

\begin{lemma} {\em (Energy--Casimir Mapping)}
   The Energy--Casimir mapping
   \begin{displaymath}
      \EC = (H, W) : T^*\R^m \longrightarrow \R^2
   \end{displaymath}
   classifies the dynamics. 
   When it has full rank the fibre is a $\T^2$--bundle over $G_{m,2}$
   (or a $\T^1$--bundle over $V_{m,2}$).
   Where the rank is $1$ we have a relative equilibrium and the fibre
   over this point is $V_{m,2}$, which is a $\T^1$--bundle over $G_{m,2}$.
   In both cases the periodic flow of~$J$ gives an $S^1$--action such that
   the centre of the flower is $V_{m,2} / S^1 = G_{m,2}$.
   For $\mu=0$ the fibre of $\EC^{-1}(h, 0)$ is itself a ramified
   sphere bundle over the base space
   \begin{displaymath}
      \left\{ (V, T, S) \in \R^3 : \hat{H}(V, T, S) = h,
      2VT = {\textstyle \frac{1}{2}}S^2, V \geq 0, T \geq 0
      \right\}
   \end{displaymath}
   with regular fibres $V_{m,1} = S^{m-1}$ and singular fibre
   $V_{m,0}$ (a single point) over $(V, T, S) = 0$.
\end{lemma}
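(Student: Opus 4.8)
\noindent
The plan is to push the entire analysis down to the $O(m)$--reduced Poisson space of Lemma~\ref{lem:SingRedOm} and then rebuild the fibres of $\EC$ by re--attaching the Stiefel--manifold orbits of the Hilbert map~$\chi$. Since both $H$ and $W$ are $O(m)$--invariant, $\EC$ factors as $\EC = (\hat H, W)\circ\chi$, where $(\hat H, W)\colon \mathcal R \to \R^2$ is defined on the solid region $\mathcal R = \chi(T^*\R^m) = \{\,V\ge 0,\ T\ge 0,\ W = 4VT - S^2 \ge 0\,\}\subset\R^3$ and $\hat H = T + U(V)$. Consequently $\EC^{-1}(h,w) = \chi^{-1}(\gamma_{h,w})$ with base curve $\gamma_{h,w} = \{(V,T,S)\in\mathcal R : \hat H = h,\ W = w\}$, a level set of the reduced one--degree--of--freedom Hamiltonian on the symplectic leaf $\{W=w\}$. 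The whole statement then splits into identifying $\gamma_{h,w}$ and threading over it the $\chi$--fibres $V_{m,d}$ supplied by Lemma~\ref{lem:SingRedOm}, with $\dim\EC^{-1}=2m-2$ in the regular case as a dimension check.

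For $w>0$ the leaf is the hyperboloid sheet $P_\mu\cong\R^2$ of Lemma~\ref{lem:RegRedOm}, and since $\chi$ is a submersion there, full rank of $\EC$ corresponds to $\dee\hat H$ being nonzero along the leaf. In that case $\gamma_{h,w}$ is a regular level curve of the reduced system, a circle $\T^1$ when compact; where the rank drops to~$1$ we sit at a critical point of $\hat H$ on the leaf, i.e.\ a relative equilibrium, and $\gamma_{h,w}$ collapses to a point. As every point of the leaf has $W>0$, the $\chi$--fibre is everywhere $V_{m,2}$, so $\EC^{-1}(h,w)$ is a $V_{m,2}$--bundle over $\T^1$ (full rank) or equals $V_{m,2}$ itself (rank~$1$). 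To recover the ``flower'' description I would invoke the semi--local splitting of Lemma~\ref{lem:nekho}: the flow of $J=\sqrt W$ is a free $S^1$--action with $V_{m,2}/S^1 = G_{m,2}$ by Lemmas~\ref{lem:Stiefel} and~\ref{lem:Grassmann}, so over a relative equilibrium the single $V_{m,2}$ is a $\T^1$--bundle over $G_{m,2}$, while in the full--rank case the reduced periodic orbit $\gamma_{h,w}=\T^1$ and the $J$--angle $\psi$ assemble into the dynamical $\T^2$ fibred over the dynamics--free coadjoint orbit $G\cdot\mu\cong G_{m,2}$. The alternative description as a $\T^1$--bundle over $V_{m,2}$ is the same space regrouped through the $S^1$--bundle $V_{m,2}\to G_{m,2}$.

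For $w=0$ the leaf is the cone $\{4VT = S^2\}$, and $\gamma_{h,0}$ is exactly the base $\{\hat H = h,\ 2VT = \frac{1}{2}S^2,\ V\ge 0,\ T\ge 0\}$ of the statement. Here $J=\sqrt W$ is no longer smooth (Remark~\ref{rem:polynomial}) and the $S^1$--action degenerates, so I would argue fibrewise: by Lemma~\ref{lem:SingRedOm} the $\chi$--fibre over a point of $\gamma_{h,0}$ distinct from the cone tip has $d=1$ and equals $V_{m,1}=S^{m-1}$, whereas over the tip $(V,T,S)=0$ one has $\xx=\yy=0$, $d=0$, and the fibre degenerates to the single point $V_{m,0}$. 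The tip lies on the energy level precisely when $h=U(0)$, and threading these fibres over $\gamma_{h,0}$ produces the asserted ramified sphere bundle, with ramification confined to the tip.

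I expect the main obstacle to be the passage through $w=0$ and the global coherence of the two bundle pictures. The delicate point is that the $V_{m,2}$--over--$\T^1$ description for $w>0$ and the $S^{m-1}$--over--$\gamma_{h,0}$ description for $w=0$ must be reconciled as limits of one another even though the Stiefel fibre drops from $V_{m,2}$ to $V_{m,1}$ and the $S^1$--action ceases to be free; this is exactly where the non--smoothness of $J=\sqrt W$ versus the polynomial $W$ (Remark~\ref{rem:polynomial}) enters, and where Lemma~\ref{lem:nekho}, valid only for weakly regular values, cannot be applied directly. Establishing that the regular fibres $S^{m-1}$ over the smooth part of $\gamma_{h,0}$ form an honest bundle ramifying only over the tip --- and that for $m\ge 3$ this ramified sphere bundle cannot be completed to a torus, so that no monodromy is available there --- is the step requiring the most care.
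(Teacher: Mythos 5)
Your proposal is correct and is essentially the paper's own argument written out in full: the paper's entire proof is the single sentence that the lemma follows from Lemmas~\ref{lem:Stiefel} and~\ref{lem:Grassmann}, i.e.\ precisely the assembly you perform --- factoring $\EC$ through the Hilbert mapping of Lemma~\ref{lem:SingRedOm}, identifying the reduced level sets on the symplectic leaves, and re-attaching the group orbits $V_{m,d}$ and coadjoint orbits $G_{m,2}$ over them. Your closing caveats (the $w = 0$ limit, the failure of the torus completion for $m \geq 3$) are not required for the statement itself and are treated by the paper in the surrounding text (Remark~\ref{rem:polynomial} and the discussion following Lemma~\ref{lem:SingRedOm}) rather than in the proof.
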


\begin{proof}
This follows from Lemmas~\ref{lem:Stiefel} and~\ref{lem:Grassmann}.
\end{proof}

\noindent
Note that we cannot say that the fibre over a regular point is
$\T^2 \times G_{m,2}$, since the $S^1$--bundle $V_{m,2}$ does not
posses a global section. 
In fact, for $m=3$ it is the Hopf fibration of $V_{3,2} = SO(3)$.

One way to view this classification result is to consider the two stage
process of first considering the Energy--Momentum mapping and then 
the mapping of sums of squares from momentum space to $\R$ that defines
the dynamically relevant Casimir. 

In $T^*\R^3$ all this comes down to a familiar picture.
The situation is simplified by identifying $\oo(3)^*$ with $\R^3$.
The orbit $G \cdot \mu \cong G/G_{\mu} = G_{3,2} = S^2$ is the
sphere with radius $j = || \mu ||$ in momentum space.
Already most of the group is involved in generating the coadjoint orbit, while
only the $S^1$ of rotations about the axis $\mu$ actually generates dynamics.
The motion in the configuration space~$\R^3$ takes place in the plane
orthogonal to $\mu$, which is the projection of the set $\J^{-1}(\mu)$.
The flow of $J$ generates an $S^1$ in this plane, and factoring this out
of $\J^{-1}(\mu)$ gives the reduced space $\R^2$. 
The set of all such planes is the coadjoint orbit. 

For the critical value $0 \in \oo(3)^* \cong \R^3$ the sphere shrinks to
the point~$G \cdot 0$.
The group orbit $G \cdot p \subseteq T^*\R^3$ turns into the sphere
$V_{3,1} = S^2$ encoding the common direction of $\xx \parallel \yy$.
For $(\xx, \yy) = 0$ this sphere shrinks to a point as well.

In higher dimension the regular situation is similar, but more complicated
by the fact that there are additional relations between the angular momenta
that define the coadjoint orbit.
These syzygies, together with $J = \const$ define the coadjoint
orbit $G \cdot \mu \cong G_{m,2}$ as a submanifold of momentum space.
E.g.\ for $m=4$ the single additional syzygy is the Pl\"ucker
relation $L_{12} L_{34} + L_{14}L_{23} + L_{13} L_{24} = 0$.
Such identities hold for any 4 indices when $m \geq 5$, but all
such identities with terms $L_{ij} L_{kl}$, $i<j, k<l$ 
are not independent of each other on their common zero level set.
As before $G_{m,2}$ is the space of all $2$--dimensional oriented planes
in $\R^m$.
No dynamics takes place in this space.
When $J>0$ the dynamics in the invariant plane is described by $(\xi,\eta)$
and the angle $\psi$ conjugate to $J$.

\section{Reduction of the degenerate Neumann System}
\label{reduce}

The results of the last section are now applied to the action of a direct 
product of orthogonal groups.
Many statements just go through because they hold separately for each
factor of the group. 
The essential difference is that we are not starting from the symplectic
manifold $T^*\R^n$ with the standard symplectic structure $[ .. \, , .. ]$,
but from the Poisson manifold $T^*\R^{n+1}$ with the Poisson structure
$\{ .. \, , .. \}$ that has $T^*S^n$ among its symplectic leaves.
The most degenerate case of the Neumann system appears when all spring
constants in the potential are equal.
This is the case treated first.

\subsection{$O(n+1)$ symmetry on $S^{n}$}
\label{sec:GeoSn}

When all coefficients in the Neumann system are equal the potential
is constant because it is proportional to the Casimir $C_1$ of the
Poisson structure \eqref{eqn:Dirac}.
Hence, the Hamiltonian describes the geodesic flow on $S^n$ with the
induced metric from $\R^{n+1}$.

Surprisingly little changes as compared to
section~\ref{orthogonalgroupactions} when 
systems on $S^n$ embedded in $\R^{n+1}$ are considered.
The reason is that the constraining Casimirs are both invariants of the
group action, in fact they are given by $C_1 = 2 V$ and $C_2 = S$.

Actually things are much nicer for this action on~$T^*S^n$, since it is
``almost'' free.
The reason is that the origin $\xx=0$ is no longer part of the configuration
space.
Moreover, since $\xx \perp \yy$ on $T^*S^n \subseteq T^*\R^{n+1}$
the only way to achieve $\xx \parallel \yy$ is to have $\yy = 0$,
i.e.\ no momentum at all.
If we consider sufficiently high energy (or geodesic flows with nonzero
energy from the start) this is impossible. 
The following theorem is well known, since it describes the geodesic
flow on the sphere; we have merely formulated it in the general framework
developed here for the degenerate Neumann system.

\begin{theorem} {\em (Dirac Bracket and $S^n$)} \label{thm:DiracSn}
   Endowing $T^*\R^{n+1}$ with the Dirac bracket \eqref{eqn:Dirac}
   the additional Casimirs $V = \frac{1}{2} C_1$ and $S = C_2$ are
   among the three basic invariants $V$, $T$, $S$ and the reduced
   brackets vanish identically.
   The reduced space merely is the point $H = T = J^2/2$. 
   The only dynamics in phase space is that generated by $J$, it is
   motion along the great circle defined by the intersection of $S^n$
   and the plane spanned by $\xx$ and $\yy$.
   The quotient of the energy surface $\{ (\xx, \yy) \in T^*S^n: H= h > 0\}$ 
   by the flow of $H$  is $J^{-1}(\sqrt{2h})/S^1 = G_{n+1,2}$.
\end{theorem}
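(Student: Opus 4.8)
The plan is to exploit the single structural fact that distinguishes the constrained setting from the unconstrained one treated in Section~\ref{orthogonalgroupactions}: with respect to the Dirac bracket~\eqref{eqn:Dirac} two of the three basic invariants are \emph{themselves} Casimirs. Concretely $V = \frac{1}{2}\scap{\xx}{\xx} = \frac{1}{2}C_1$ and $S = \scap{\xx}{\yy} = C_2$, and the Poisson structure~\eqref{eqn:Dirac} was built precisely so that $C_1$ and $C_2$ are Casimirs. Hence $\{V,\cdot\} = \frac{1}{2}\{C_1,\cdot\} = 0$ and $\{S,\cdot\} = \{C_2,\cdot\} = 0$ identically, so in particular the three brackets $\{V,T\}$, $\{V,S\}$, $\{T,S\}$ all vanish. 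This is the decisive contrast with Lemma~\ref{lem:SingRedOm}, where the same three functions close into $\mathfrak{sl}(2,\R)$ under the standard bracket $[\cdot,\cdot]$: the Dirac correction terms in~\eqref{eqn:genDirac} kill exactly those relations. I would record this first, since it makes the reduced Poisson structure on the $(V,T,S)$--space trivial.

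Next I would identify the reduced space. Restricting to $T^*S^n$ fixes $C_1 = 1$ and $C_2 = 0$, i.e.\ $V = \frac{1}{2}$ and $S = 0$. Feeding this into the identity~\eqref{eqn:Jxy}, which in the present variables reads $W = 4VT - S^2$, gives $W = 2T$, hence $T = \frac{1}{2}W = \frac{1}{2}J^2$. Since all coefficients are equal the potential is proportional to $C_1$ and thus constant on $S^n$, so the Hamiltonian~\eqref{eqn:Hamiltonian} collapses to $H = T$ up to an additive constant. Therefore on each common level of the Casimirs the values of $V$, $S$, $T$ and $H$ are all determined, and the reduced space is the single point $H = T = \frac{1}{2}J^2$.

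For the dynamics I would use that $H = T = \frac{1}{2}J^2$ is a function of the Casimir $W = J^2$ alone, so that $X_H = J\,X_J$ and the flow of $H$ is merely a reparametrisation of the flow of $J$. Before invoking Lemma~\ref{lem:nekho} I would verify that the $J$--flow for the Dirac bracket agrees with the canonical one: since $C_2$ generates the dilation $(\xx,\yy)\mapsto(e^t\xx,e^{-t}\yy)$ under which each $L_{ik}$ is invariant, one has $[C_1,J] = [C_2,J] = 0$, and then both correction terms in~\eqref{eqn:genDirac} drop out, giving $\{f,J\} = [f,J]$ for all $f$. Lemma~\ref{lem:nekho} then shows that the $2\pi$--periodic flow of $J$ projects to a circle of radius $\|\xx\|$ in the plane spanned by $\xx$ and $\yy$; on $S^n$ one has $\|\xx\| = 1$, so this circle is exactly the great circle cut out by intersecting $S^n$ with that $2$--plane.

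Finally, for the quotient I would describe the energy surface explicitly. The set $\{H = h > 0\}$ in $T^*S^n$ consists of pairs $(\xx,\yy)$ with $\|\xx\| = 1$, $\xx\perp\yy$ and $\frac{1}{2}\|\yy\|^2 = h$, equivalently $J = \sqrt{2h}$; hence it equals $J^{-1}(\sqrt{2h})$. Sending $(\xx,\yy)\mapsto(\xx,\yy/\|\yy\|)$ identifies it with the Stiefel manifold $V_{n+1,2}$ of orthonormal $2$--frames in $\R^{n+1}$, and the $S^1$ generated by $J$ rotates such a frame inside the oriented $2$--plane it spans; quotienting out this rotation leaves exactly the oriented $2$--plane, so $J^{-1}(\sqrt{2h})/S^1 = V_{n+1,2}/S^1 = G_{n+1,2}$ by Lemma~\ref{lem:Grassmann}. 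The proof is essentially bookkeeping once the first observation is in place; I expect the only genuine care to be required in that first step, namely in verifying cleanly that the $\mathfrak{sl}(2,\R)$ brackets of the unconstrained reduction degenerate to zero under the Dirac bracket, so that the entire $(V,T,S)$ reduced space shrinks to a point.
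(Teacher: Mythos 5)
Your proposal is correct and follows essentially the same route as the paper's own proof: Casimirs kill the reduced brackets so the reduced space is a point, the flow of $J$ (which coincides with the canonical one since $[J,C_1]=[J,C_2]=0$) traces great circles, and the energy surface modulo this $S^1$ is the oriented Gra{\ss}mannian $G_{n+1,2}$. Your explicit verification via the dilation flow of $C_2$ and the normalisation $(\xx,\yy)\mapsto(\xx,\yy/\|\yy\|)$ onto $V_{n+1,2}$ only spells out steps the paper leaves implicit.
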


\begin{proof}
Brackets involving Casimirs are always zero, and $\{ T, T \} = 0$ as well.
When $H$ is fixed then $J$ is also fixed and if $H > 0$ then $J$ assumes
regular values.
So the meadow for the flowers is just a single point when we fix the energy.
For a geodesic flow a change of the energy merely re-parametrises the orbits.
The petals of the flower are the one-dimensional tori parametrised by
the angle conjugate to $J$.
The symplectic leaves are given by the quotient of the energy surface
by the action of $J$. 
For $H = 1/2$ the energy surface is the unit sphere bundle
\begin{displaymath}
   T_1^*S^n = \left\{ (\xx,\yy) \in T^*\R^{n+1} :
   ||\xx|| = 1, \scap{\xx}{\yy} = 0, ||\yy|| = 1 \right\} \,.
\end{displaymath}
The flow generated by $J$ with respect to the Dirac bracket is a rotation
in the plane spanned by $\xx$ and $\yy$, as before (the original symplectic
bracket $[J, C_i]$ with the Casimirs vanishes).
Hence, the orbit is an (oriented) great circle.
The set of all oriented great circles is the Gra{\ss}mann manifold
$G_{n+1,2}$, the set of oriented $2$--planes in~$\R^{n+1}$, whence 
\begin{displaymath}
   T_1^*S^n / S^1 = G_{n+1,2} \,.
\end{displaymath}
The reversing symmetry $(\xx, \yy) \mapsto (\xx, -\yy)$ gives another orbit.
These are two orbits corresponding to each $2$--plane, 
one for each orientation.
\end{proof}

\noindent
The flow of $H$ is the flow of $J$, up to scaling time by a constant.
This is why the above result can also be obtained from
Lemma~\ref{lem:Grassmann}.

\subsection{The general degenerate case}
\label{thegeneraldegeneratecase}

Our main result is concerned with the more complicated case in which neither
all coefficients of the potential are the same, nor all of them are different. 
For each group of $m_{\sigma}$ equal coefficients there is an $O(m_{\sigma})$
symmetry group acting on the space spanned by the axes of the equal
coefficients.
In fact the action is an $O(m_{\sigma})$--action on $\R^{m_{\sigma}}$, as
described in Section~\ref{orthogonalgroupactions}.
Since the axes of the groups of equal coefficients are different, the
group action of the direct product of the $O(m_{\sigma})$ is simply
given by the direct product of their actions.
The difference to Section~\ref{orthogonalgroupactions} (besides the fact
that now we have a direct product of orthogonal groups) is that we need
to consider the Dirac bracket $\{ ..\, , .. \}$ on $\R^{n+1}$ instead of
the standard symplectic structure $[..\, , ..]$.

The reduction with respect to the action of the joint symmetry group 
is regular when the momentum of each component is nonzero.
The total angular momentum for each group of equal coefficients is 
an $\Ad^*$--invariant function.
Such functions become Casimirs after dividing out the group.
In addition there is a relation between these Casimirs that comes
from the Casimirs $C_1$ and~$C_2$ resulting from the original
embedding of the sphere in $\R^{n+1}$.
Hence, we get a reduced system that has $\ell$ degrees of freedom:
one for each group of equal coefficients minus one for the constraint
to be on the sphere.
The reduction is regular when all the fixed momenta are nonzero.
In this case the reduction leads to a simple system with an effective
potential. 
When some or all of the momenta are zero singular reduction needs
to be used.
In this case the reduced system is embedded in a higher dimensional
Euclidean space, and the reduced phase space is no longer a smooth
manifold, though still a semi-algebraic variety.

\begin{proposition} {\em (Symmetry Group Action)}
   The Neumann system with $\ell + 1$ groups of equal coefficients with 
   multiplicity $m_{\sigma} \geq 1$, $\sigma = 0, \dots, \ell$ is invariant
   under the symmetry group
   \begin{displaymath}
      G = O(m_0) \times O(m_1) \times \dots \times O(m_{\ell})
   \end{displaymath}
   with action $\Phi$ on $T^*\R^{n+1}$, $\sum m_{\sigma} = n+1$,
   \begin{displaymath}
   \begin{array}{cccc}
     \Phi : & G \times T^*\R^{n+1} &
     \longrightarrow & T^*\R^{n+1} \\
     & ( g, (\xx,\yy) ) & \mapsto & (g\xx, g\yy) \,.
   \end{array}
   \end{displaymath}
   Here $g$ is a block diagonal matrix from $G \subseteq O(n+1)$.
\end{proposition}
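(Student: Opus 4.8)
The plan is to verify the three ingredients hidden in the phrase ``is invariant under the symmetry group'': that $G$ genuinely embeds into $O(n+1)$ as the group of block-diagonal matrices described, that the prescribed map $\Phi$ is a left group action, and that both pieces of the dynamics --- the Hamiltonian $H$ of \eqref{eqn:Hamiltonian} and the Dirac bracket $\{\,\cdot\,,\,\cdot\,\}$ of \eqref{eqn:genDirac} --- are preserved by every $\Phi_g$.

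First I would fix the block structure. Partition $\{0,\dots,n\}$ into the index sets $I_0,\dots,I_\ell$ with $|I_\sigma| = m_\sigma$; since $\sum_\sigma m_\sigma = n+1$ these fill the whole range. An element $g = (g_0,\dots,g_\ell)\in G$ is sent to the block-diagonal matrix that acts on the coordinates indexed by $I_\sigma$ through $g_\sigma\in O(m_\sigma)$. A block-diagonal matrix with orthogonal blocks is orthogonal, so this realises $G$ as a closed subgroup of $O(n+1)$; because the $I_\sigma$ are disjoint the factors commute and $\Phi$ is exactly the direct product of the $\ell+1$ cotangent-lifted $O(m_\sigma)$--actions of \eqref{eqn:Oaction}. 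That $\Phi$ is a left action, $\Phi_{gh}=\Phi_g\circ\Phi_h$ and $\Phi_e=\id$, is immediate from matrix multiplication. This is routine bookkeeping.

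The substantive step is invariance. For the Hamiltonian, orthogonality of $g$ gives $\scap{g\yy}{g\yy}=\scap{\yy}{\yy}$, so the kinetic term survives; each $V_\sigma=\frac{1}{2}\sum_{i\in I_\sigma}x_i^2$ is the squared norm of the $I_\sigma$--block of $\xx$, which $g_\sigma\in O(m_\sigma)$ preserves while the other factors leave it untouched, so $V=\sum_\sigma b_\sigma V_\sigma$ and hence $H$ are $\Phi_g$--invariant. For the bracket I would show that $\Phi_g$ is a Poisson map for the Dirac bracket. The canonical bracket $[\,\cdot\,,\,\cdot\,]$ is invariant because $(\xx,\yy)\mapsto(g\xx,g\yy)$ is the cotangent lift of an orthogonal, hence symplectic, linear map, and so is a linear symplectomorphism of the standard structure on $T^*\R^{n+1}$. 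The constraints $C_1=\scap{\xx}{\xx}$ and $C_2=\scap{\xx}{\yy}$ are invariant by the same orthogonality computation. Since \eqref{eqn:genDirac} builds $\{\,\cdot\,,\,\cdot\,\}$ solely out of $[\,\cdot\,,\,\cdot\,]$, $C_1$ and $C_2$, substituting $f\circ\Phi_g$ and $h\circ\Phi_g$ into that formula and using the chain rule together with these invariances gives $\{f,h\}\circ\Phi_g=\{f\circ\Phi_g,\,h\circ\Phi_g\}$; in particular $\Phi_g$ preserves the symplectic leaf $T^*S^n$.

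I do not expect a real obstacle, since the proposition amounts to the assertion that the whole construction respects the block decomposition. The only point deserving care is that it is the \emph{Dirac} bracket, not merely the canonical one, that must be shown invariant; but this is automatic once one notices that the correction terms in \eqref{eqn:genDirac} involve only the $\Phi_g$--invariant data $C_1$, $C_2$ and $[\,\cdot\,,\,\cdot\,]$. If a shorter argument is preferred, the explicit substitution can be replaced by the coordinate-free remark that $\Phi_g$ maps $T^*S^n$ to itself while preserving its induced symplectic form, and therefore automatically preserves the intrinsic bracket.
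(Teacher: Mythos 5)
Your proof is correct and follows essentially the same route as the paper, which simply observes that each group of equal coefficients admits an $O(m_\sigma)$ symmetry and that the disjointness of the index sets $I_\sigma$ makes the combined action a direct product of block-diagonal orthogonal transformations. You additionally spell out the invariance of the Dirac bracket \eqref{eqn:genDirac} via the invariance of $[\,\cdot\,,\,\cdot\,]$, $C_1$ and $C_2$, a point the paper's one-line proof leaves implicit; this is a worthwhile detail but not a different method.
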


\begin{proof}
This is clear from the structure of the degenerate Neumann system. 
Each group of $m_{\sigma}$ equal coefficients $b_{\sigma} = a_i$,
$i \in I_{\sigma}$ admits an $O(m_{\sigma})$ symmetry acting on
the co-ordinates $x_i$ with $i \in I_{\sigma}$ and the 
corresponding momenta $y_i$.
The sets $I_\sigma$ are disjoint, hence the action is a direct product. 
\end{proof}

\noindent
When the potential of the Neumann system is written in
general form as $\frac{1}{2} \scap{\xx}{{\bf A} \xx}$ we see that the symmetry
group is in fact the group that describes the ambiguity of the choice
of a diagonalizing co-ordinate system when ${\bf A}$ has multiple
eigenvalues.

\begin{theorem} {\em (Momentum Mapping)}
   Denote by $\widetilde{G}$ the subgroup of $G$ containing
   the factors~$O(m_{\sigma})$ with $m_{\sigma} \geq 2$ and denote the number
   of such factors by $\tilde\ell + 1$.
   The Momentum mapping $\J$ of the $\Phi$--action of
   $\widetilde{G} \subset G$ is the direct product of the
   Momentum mappings of the individual factors for which $m_{\sigma} \geq 2$.
   It is $\Ad^*$--equivariant with momentum
   $\mu \in \widetilde{\gg}$.
   The isotropy groups are the direct products of the isotropy groups
   of Lemma~\ref{lem:momOm}.
   To each partial mapping corresponding to indices $I_{\sigma}$ with
   $m_{\sigma} \geq 2$ there is a Casimir $W_{\sigma}$ given by the
   $\Ad^*$--invariant function
   \begin{displaymath}
      W_{\sigma}(\xx, \yy) = \sum_{i<k \in I_{\sigma}} L_{ik}^2(\xx,\yy) \,.
   \end{displaymath}
\end{theorem}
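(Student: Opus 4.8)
The plan is to deduce everything from Lemma~\ref{lem:momOm} applied separately to each factor $O(m_\sigma)$ with $m_\sigma \geq 2$, exploiting that $\widetilde{G}$ acts as a direct product on the pairwise disjoint coordinate blocks $I_\sigma$. Write $\widetilde{\gg} = \bigoplus_{m_\sigma \geq 2} \oo(m_\sigma)$, so that an element is a tuple $\xi = (\xi_\sigma)$ with $\xi_\sigma \in \oo(m_\sigma)$ acting only on the variables $(x_i, y_i)_{i \in I_\sigma}$. First I would note that the infinitesimal generator of $\Phi$ is block diagonal, $X_\xi = (\xi_\sigma \xx_{I_\sigma}, \xi_\sigma \yy_{I_\sigma})_\sigma$, whence its Hamiltonian splits as $J_\xi = \sum_\sigma \scap{\xi_\sigma \xx_{I_\sigma}}{\yy_{I_\sigma}}$. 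Pairing against $\xi$ then shows at once that $\J = (\J_\sigma)_\sigma$ with each $\J_\sigma(\xx, \yy) = \xx_{I_\sigma} \otimes \yy_{I_\sigma} - \yy_{I_\sigma} \otimes \xx_{I_\sigma}$ the single-factor momentum mapping \eqref{eqn:MomMapOm}. The $O(1)$ factors carry the trivial algebra $\oo(1) = 0$ and hence contribute nothing, which is precisely why one passes to $\widetilde{G}$.

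The one genuinely new point compared with Section~\ref{orthogonalgroupactions} is that here the bracket on $T^*\R^{n+1}$ is the Dirac bracket \eqref{eqn:Dirac} rather than the standard symplectic one. I would dispose of this at the outset by observing that every generator $J_\xi$ and every component $L_{ik}$ with $i, k \in I_\sigma$ is invariant under the constraint Casimirs, that is $[J_\xi, C_1] = [J_\xi, C_2] = 0$ and $[L_{ik}, C_1] = [L_{ik}, C_2] = 0$, since rotations within a block preserve both $\scap{\xx}{\xx}$ and $\scap{\xx}{\yy}$. Feeding this into \eqref{eqn:genDirac} collapses the Dirac bracket to the canonical one on these functions, so that $\{ J_\xi , f \} = [ J_\xi , f ]$ for all $f$. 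Hence the Dirac-Hamiltonian and the standard-Hamiltonian vector fields of $J_\xi$ coincide, confirming that $J_\xi$ is indeed the momentum of $X_\xi$ on the leaf $T^*S^n$ and that the whole momentum-map computation of Lemma~\ref{lem:momOm} transfers verbatim.

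With these two observations in place the remaining assertions are bookkeeping on the product. $\Ad^*$--equivariance follows factorwise: since $\Phi_g$ acts block-diagonally and each $\J_\sigma$ satisfies \eqref{eqn:preAd}, one has $\J \circ \Phi_g = (\Ad^*_{g_\sigma^{-1}} \J_\sigma)_\sigma = \Ad^*_{g^{-1}} \circ \J$ for the block-diagonal coadjoint action. Because the blocks are disjoint, the isotropy group of a point and of a momentum value are the direct products over $\sigma$ of the single-factor isotropy groups of Lemmas~\ref{lem:Stiefel} and~\ref{lem:Grassmann}. Finally, each $W_\sigma = \sum_{i < k \in I_\sigma} L_{ik}^2 = \frac{1}{2}\trace \J_\sigma \J_\sigma^t$ is the squared norm on $\oo(m_\sigma)^*$ and hence $\Ad^*$--invariant; it therefore factors through $\J_\sigma$ and, exactly as in Lemma~\ref{lem:SingRedOm}, descends to a Casimir of the reduced Poisson structure.

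I expect the product structure itself to be entirely routine; the only step requiring genuine care is the Dirac-bracket reduction of the second paragraph, and even this is short once one records that the generators commute with $C_1$ and $C_2$. A minor point to keep straight is the indexing: the momentum takes values in $\widetilde{\gg}^*$, so only the $\tilde\ell + 1$ factors with $m_\sigma \geq 2$ appear, while the discrete $O(1)$ factors are handled separately by residual $\Z_2$--symmetries rather than by a momentum map.
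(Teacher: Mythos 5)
Your proposal is correct and takes essentially the same approach as the paper: apply Lemma~\ref{lem:momOm} factor by factor on the disjoint coordinate blocks $I_\sigma$, and observe that the discrete $O(1)$ factors contribute nothing to the momentum, which is why only $\widetilde{G}$ appears. Your second paragraph, checking that the generators $J_\xi$ and the $L_{ik}$ commute with $C_1$ and $C_2$ so that the Dirac bracket \eqref{eqn:genDirac} collapses to the canonical one on them, spells out a point the paper's proof leaves implicit (it is only noted in passing in the proof of Theorem~\ref{thm:DiracSn}), but it is a refinement of the same argument rather than a different route.
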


\begin{proof}
Since each partial action of $O(m_{\sigma})$ is as described in
Lemma~\ref{lem:momOm} this result immediately follows.
When $m_{\sigma} = 1$ the corresponding group only has a discrete factor
$O(1)$, which acts by reflection in configuration space leading to the
$\pi$--rotation $(x_{\sigma}, y_{\sigma}) \mapsto (-x_{\sigma}, -y_{\sigma})$. 
This discrete symmetry does not appear in the Momentum mapping. 
Hence, the momentum $\mu$ only contains the components for 
each group with $m_{\sigma} \geq 2$.
\end{proof}

\begin{remark}  \label{rem:signed}
When $m_\sigma = 2$ it is also useful to consider $SO(2)$ reduction
instead of the $O(2)$ reduction.
The reason is that then $L_{ik}$, $i < k \in I_\sigma$ is among the
invariants and $W_{\sigma} = L_{ik}^2$ can be replaced by
$J_{\sigma} = L_{ik}$.
In this way the action $J_{\sigma}$ becomes a signed quantity that is
differentiable also at $||\mu_\sigma|| = 0$, because no square root
needs to be extracted. 
This is particularly important at $||\mu_\sigma|| = 0$ because then the
fixed set of the discrete symmetry is accessible for the dynamics.
It is not possible to define such signed~$J_\sigma$ when $m_\sigma \ge 3$
because then the plane of rotation cannot be defined when
$||\mu_\sigma || = 0$, while for $m_\sigma = 2$ there only is one plane.
\end{remark}

\noindent
Denote by $\Pro_{\sigma}$ the projection onto the co-ordinates with
indices $I_{\sigma}$, so that $\Pro_{\sigma} \xx$ gives all the
co-ordinates $x_i$ with $i \in I_{\sigma}$, similarly
$\Pro_{\sigma} \yy$ gives the corresponding momenta.
In each such subspace we have the standard
Eulidean scalar product and norm.

\begin{theorem} {\em (Regular Reduction)} \label{thm:RegRed}
   Let each component of $\mu \in \widetilde{\gg}$ be nonzero, hence
   the Casimir $W_{\sigma} \neq 0$ for each $m_{\sigma} \geq 2$.
   For each such $\sigma$ define reduced co-ordinates by
   \begin{displaymath}
      \xi_{\sigma} = || \Pro_{\sigma} \xx ||, \quad
      \eta_{\sigma} = \frac{ \scap{\Pro_{\sigma} \xx}{\Pro_{\sigma} \yy} }
                           {|| \Pro_{\sigma} \xx||} \,.
   \end{displaymath}
   When $m_{\sigma} = 1$ define $\xi_{\sigma} = x_{\sigma}$ and
   $\eta_{\sigma} = y_{\sigma}$.
   The reduction  mapping $R$ from $(\xx, \yy)$ to  $(\xib, \etab)$ is a
   Poisson mapping, i.e.\ it preserves the Dirac bracket $\{ .. \, , .. \}$.
   The $G$--reduced phase space is an open subset of $T^*S^{\ell}$.
   The reduced Hamiltonian reads
   \begin{equation} \label{eqn:HamRed}
      H =  \frac{1}{2} \sum_{\sigma=0}^{\ell} \eta_{\sigma}^2 + V_{\mu}, \qquad
      V_{\mu} = \frac{1}{2} \sum_{\sigma = 0}^{\ell} b_{\sigma} \xi_{\sigma}^2
      + \frac{W_{\sigma}}{ \xi_{\sigma}^2} \,.
   \end{equation}
   where formally $W_{\sigma} \equiv 0$ when $m_{\sigma} = 1$.
\end{theorem}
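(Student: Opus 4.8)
The plan is to establish the three claims in turn, exploiting that each reduced pair $(\xi_\sigma,\eta_\sigma)$ is built solely from the variables of the block~$I_\sigma$, so that the blocks are canonically decoupled and communicate only through the two Dirac constraints $C_1,C_2$. First I would identify the image of~$R$. Setting $S_\sigma=\scap{\Pro_\sigma\xx}{\Pro_\sigma\yy}$, so that $\eta_\sigma=S_\sigma/\xi_\sigma$ and hence $S_\sigma=\xi_\sigma\eta_\sigma$, summation over~$\sigma$ gives $\sum_\sigma\xi_\sigma^2=\scap{\xx}{\xx}=C_1=1$ and $\sum_\sigma\xi_\sigma\eta_\sigma=\scap{\xx}{\yy}=C_2=0$. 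Thus $R$ maps into the locus cut out by these two relations, namely $T^*S^\ell$ with ambient coordinates $(\xib,\etab)$. The regularity hypothesis $W_\sigma\neq0$ forces $\Pro_\sigma\xx\neq0$ and hence $\xi_\sigma>0$ whenever $m_\sigma\geq2$; being an open condition, this exhibits the reduced space as an open subset of $T^*S^\ell$, on which $(\xib,\etab)$ is a complete set of invariants by the blockwise application of Lemma~\ref{lem:RegRedOm}.

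Next I would show that $R$ is Poisson by checking it on the ambient coordinate functions, i.e.\ that the Dirac brackets of $\xi_\sigma,\eta_\sigma$ computed on $T^*\R^{n+1}$ reproduce the Dirac bracket~\eqref{eqn:Dirac} of $T^*S^\ell$. The canonical part is immediate: distinct index sets $I_\sigma$ are disjoint, so cross-block canonical brackets vanish, while within a block Lemma~\ref{lem:RegRedOm} gives $[\xi_\sigma,\eta_\sigma]=1$; hence $[\xi_\sigma,\eta_\tau]=\delta_{\sigma\tau}$ and $[\xi_\sigma,\xi_\tau]=[\eta_\sigma,\eta_\tau]=0$. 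For the correction term in~\eqref{eqn:genDirac} I would record the four brackets with the constraints, namely $[\xi_\sigma,C_1]=0$, $[\xi_\sigma,C_2]=\xi_\sigma$, $[\eta_\sigma,C_1]=-2\xi_\sigma$ and $[\eta_\sigma,C_2]=-\eta_\sigma$, the last because $C_2=\sum_\tau S_\tau$ and only $S_\sigma=\xi_\sigma\eta_\sigma$ survives the bracket with~$\eta_\sigma$. Substituting into~\eqref{eqn:genDirac} with $C_1=1$ then yields $\{\xi_\sigma,\xi_\tau\}=0$, $\{\xi_\sigma,\eta_\tau\}=\delta_{\sigma\tau}-\xi_\sigma\xi_\tau$ and $\{\eta_\sigma,\eta_\tau\}=\xi_\tau\eta_\sigma-\xi_\sigma\eta_\tau$, which is exactly~\eqref{eqn:Dirac} written in the variables $(\xib,\etab)$. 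Since each answer is again a function of $(\xib,\etab)$ alone, the bracket descends and $R$ intertwines the two Dirac structures.

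Finally I would compute the reduced Hamiltonian. The potential is block-diagonal, $V=\sum_\sigma b_\sigma V_\sigma=\tfrac12\sum_\sigma b_\sigma\xi_\sigma^2$. For the kinetic term $T=\tfrac12\sum_\sigma\|\Pro_\sigma\yy\|^2$ I would apply~\eqref{eqn:Jxy} blockwise, $W_\sigma=\xi_\sigma^2\|\Pro_\sigma\yy\|^2-S_\sigma^2=\xi_\sigma^2\bigl(\|\Pro_\sigma\yy\|^2-\eta_\sigma^2\bigr)$, so that $\|\Pro_\sigma\yy\|^2=\eta_\sigma^2+W_\sigma/\xi_\sigma^2$; for $m_\sigma=1$ one has $W_\sigma\equiv0$ and the same formula holds trivially. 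Summation gives $H=\tfrac12\sum_\sigma\eta_\sigma^2+\tfrac12\sum_\sigma\bigl(b_\sigma\xi_\sigma^2+W_\sigma/\xi_\sigma^2\bigr)$, which is~\eqref{eqn:HamRed}; the centrifugal term $W_\sigma/\xi_\sigma^2$ of the effective potential~$V_\mu$ arises entirely from the momentum components that have been reduced away.

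The genuinely routine parts are the bracket evaluations; the one step to get right is the Dirac correction, since it is the sole source of coupling between the otherwise independent blocks and must conspire to reproduce the full sphere bracket~\eqref{eqn:Dirac} rather than merely a block-diagonal symplectic form. I expect the only real care to lie in treating the $m_\sigma=1$ factors on the same footing as the $m_\sigma\geq2$ factors and in verifying that the brackets close on $(\xib,\etab)$, which is precisely what upgrades $R$ from a map into $T^*S^\ell$ to a genuine Poisson reduction.
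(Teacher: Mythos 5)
Your proposal is correct and follows essentially the same route as the paper: blockwise application of Lemma~\ref{lem:RegRedOm}, identification of the image via the Casimirs $C_1=\sum\xi_\sigma^2=1$, $C_2=\sum\xi_\sigma\eta_\sigma=0$, and elimination of $\|\Pro_\sigma\yy\|^2$ through the blockwise identity~\eqref{eqn:Jxy} to obtain~\eqref{eqn:HamRed}. The only difference is that you explicitly carry out the Dirac-bracket verification (the constraint brackets $[\xi_\sigma,C_2]=\xi_\sigma$, $[\eta_\sigma,C_1]=-2\xi_\sigma$, $[\eta_\sigma,C_2]=-\eta_\sigma$ and their substitution into~\eqref{eqn:genDirac}) that the paper dismisses as ``direct computation,'' and your results agree with the bracket relations stated in the subsequent Proposition on generalized action-angle co-ordinates.
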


\begin{proof}
The verification that this is a Poisson mapping can be done by direct
computation.
The reduction is as described in Lemma~\ref{lem:RegRedOm}, now applied
separately to each group of indices $I_{\sigma}$ with $m_{\sigma} \geq 2$.
The fact that the reduction does not lead to all of $S^{\ell}$ comes from
the fact that $\xi_{\sigma} = 0$ is impossible when
$||\mu_{\sigma}|| \neq 0$.
Nevertheless, the global Casimirs $C_1 = \sum \xi_{\sigma}^2 = 1$ 
and $C_2 = \sum \xi_\sigma \eta_\sigma = 0$
restrict the motion to the reduced phase space~$T^*S^\ell$.
\end{proof}

\noindent
Clearly, the reduced Hamiltonian \eqref{eqn:HamRed} defines an
integrable system on $S^{\ell}$.
This system was first studied by Rosochatius~\cite{rosochatius77}
who separated it in elliptical-spherical co-ordinates.

\begin{proposition} {\em (Generalized action-angle co-ordinates)}
   The Poisson structure~\eqref{eqn:Dirac} on $T^*\R^{n+1}$ locally
   near a weakly regular value of the Energy--Momentum
   mapping~$(H, \J)$ has the bracket relations
   \begin{displaymath}
   \begin{array}{rclcl}
      \{ \xi_{\sigma}, \eta_{\tau} \} & = &
      \delta_{\sigma \tau} - \frac{\xi_{\nu} \xi_{\tau}}{C_1} \, ,
      & \qquad & \sigma, \tau = 0, \ldots, \ell  \\
      \{ \eta_{\sigma}, \eta_{\tau} \} & = &
      \frac{\xi_\tau \eta_\sigma - \xi_\sigma \eta_\tau}{C_1} \,,
      & \qquad & \sigma, \tau = 0, \ldots, \ell  \\
      \{ \psi_{\varsigma}, J_{\tau} \} & = & \delta_{\varsigma \tau} \, ,
      & \qquad & \varsigma, \tau = 0, \ldots, \tilde{\ell}  \\
      \{  L_{ik}, L_{kl} \} & = & L_{li} \, ,
      & \qquad & i, k, l \in I_{\sigma} \, , \, \sigma = 0, \ldots, \ell
   \end{array}
   \end{displaymath}
   with all further bracket relations being zero. 
   The reduced Hamiltonian \eqref{eqn:hatH} only depends on the
   co-ordinates $(\xi, \eta, J)$.
   A system of generalized action-angle co-ordinates in the sense
   of Nekhoroshev~\cite{nekhoroshev72} is obtained by action-angle co-ordinates
   $(\phi_1, \ldots, \phi_{\ell}, I_1, \ldots, I_{\ell})$
   of the reduced system and by
   $(\psi_0, \ldots, \psi_{\tilde{\ell}}, J_0, \ldots, J_{\tilde{\ell}})$
   together with any sets of $\sum 2 m_{\sigma} - 4$ symplectic
   co-ordinates on the Gra{\ss}mannians $G_{m_{\sigma},2}$
   with $m_{\sigma} \geq 3$.
\end{proposition}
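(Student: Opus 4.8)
The plan is to assemble the single--factor statement of Lemma~\ref{lem:nekho} over the $\tilde\ell+1$ groups with $m_\sigma\ge 2$, to glue in the reduced Rosochatius system from Theorem~\ref{thm:RegRed}, and to check that the Dirac correction coming from the sphere constraint stays confined to the radial block. First I would verify the four families of bracket relations directly from the Dirac bracket~\eqref{eqn:genDirac}. The decisive observation is that both constraints are $G$--invariant functions of the radial data alone, $C_1=\sum_\sigma\xi_\sigma^2$ and $C_2=\sum_\sigma\xi_\sigma\eta_\sigma$, whereas the angular quantities Poisson--commute with them: since every rotation preserves $\scap{\xx}{\xx}$ and $\scap{\xx}{\yy}$ one has $[L_{ik},C_1]=[L_{ik},C_2]=0$, hence also $[W_\sigma,C_i]=0$ and $[J_\sigma,C_i]=0$. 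Thus the correction in~\eqref{eqn:genDirac} is absent from every bracket that involves a $J_\sigma$ or an $L_{ik}$, so these keep their canonical values: $\{\psi_\varsigma,J_\tau\}=\delta_{\varsigma\tau}$ because each $J_\sigma$ is an action with conjugate angle $\psi_\sigma$ (Lemma~\ref{lem:nekho} applied factorwise), and $\{L_{ik},L_{kl}\}=L_{li}$ is the $\oo(m_\sigma)$--bracket already recorded in Section~\ref{theneumannsystem}.

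For the radial block I would either invoke that the reduction map of Theorem~\ref{thm:RegRed} is Poisson or compute directly: using $[\xi_\sigma,\eta_\tau]=\delta_{\sigma\tau}$, $[\xi_\sigma,C_1]=0$, $[\xi_\sigma,C_2]=\xi_\sigma$ and $[C_1,\eta_\tau]=2\xi_\tau$ one finds $\{\xi_\sigma,\eta_\tau\}=\delta_{\sigma\tau}-\xi_\sigma\xi_\tau/C_1$, and the same mechanism yields the stated $\{\eta_\sigma,\eta_\tau\}$. This is precisely the Dirac bracket~\eqref{eqn:Dirac} with $n+1$ replaced by $\ell+1$, confirming that the radial variables carry the reduced structure of $T^*S^\ell$. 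The vanishing of all remaining brackets encodes the block structure: cross--factor canonical brackets vanish because the index sets $I_\sigma$ are disjoint, while the mixed brackets between a radial pair $(\xi_\tau,\eta_\tau)$ and the symmetry data $(\psi_\sigma,J_\sigma,L_{ik})$ vanish because the flow of each $J_\sigma$ is a rotation inside the plane spanned by $\Pro_\sigma\xx$ and $\Pro_\sigma\yy$ and therefore leaves every $G$--invariant, in particular each $\xi_\tau,\eta_\tau$, fixed.

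That $H$ depends only on $(\xi,\eta,J)$ is then immediate: $G$--invariance forces $H$ to be a function of the basic invariants of each factor, and Lemma~\ref{lem:RegRedOm} rewrites these as $(\xi_\sigma^2,\eta_\sigma^2+J_\sigma^2/\xi_\sigma^2,\xi_\sigma\eta_\sigma)$, reproducing~\eqref{eqn:HamRed}. To finish I would assemble the coordinates. Near a weakly regular value the reduced system of Theorem~\ref{thm:RegRed} is the integrable Rosochatius system on $S^\ell$, so the Liouville--Arnold theorem supplies local action--angle coordinates $(\phi_1,\dots,\phi_\ell,I_1,\dots,I_\ell)$; the globally defined pairs $(\psi_\sigma,J_\sigma)$, $\sigma=0,\dots,\tilde\ell$, adjoin one action--angle pair per group with $m_\sigma\ge2$; and for each $m_\sigma\ge3$ any symplectic chart on the coadjoint orbit $G_{m_\sigma,2}$, of dimension $2m_\sigma-4$ by Lemma~\ref{lem:Grassmann}, coordinatizes the ignorable directions on which, by Lemma~\ref{lem:nekho}, no dynamics takes place. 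Since the three blocks Poisson--commute, these constitute generalized action--angle coordinates in the sense of Nekhoroshev~\cite{nekhoroshev72}.

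I expect the main obstacle to be the bookkeeping that keeps the sphere--constraint coupling inside the radial block while preserving both the product structure across factors and the decoupling of the symmetry directions; concretely, one must confirm that $2\ell+2(\tilde\ell+1)+\sum_{m_\sigma\ge3}(2m_\sigma-4)$ equals $\dim T^*S^n=2n$, so that the listed coordinates are complete and the ignorable factor is exactly $\prod_{m_\sigma\ge3}G_{m_\sigma,2}$.
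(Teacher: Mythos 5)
Your proposal is correct and follows essentially the same route as the paper's (much terser) proof: the paper likewise reduces everything to a factorwise application of Lemma~\ref{lem:nekho}, notes that the only change is working with the Dirac/Poisson structure rather than the symplectic one, and proves the periodicity of the $J_\sigma$--flow exactly via your observation that $[J_\sigma,C_1]=[J_\sigma,C_2]=0$ (citing Theorem~\ref{thm:DiracSn}). Your explicit Dirac-bracket computations for the radial block and the dimension count $2\ell+2(\tilde\ell+1)+\sum_{m_\sigma\ge3}(2m_\sigma-4)=2n$ (which indeed checks out) merely flesh out details the paper leaves implicit; the only item the paper adds that you omit is the remark that for $m_\sigma\ge4$ the $L_{ik}$ are constrained by syzygies beyond $J_\sigma^2=\sum L_{ik}^2$, so they are coordinates on $G_{m_\sigma,2}$ only after accounting for these relations.
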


\begin{proof}
The proof of this is similar to that of Lemma~\ref{lem:nekho}. 
The difference is that here we work in the Poisson structure instead of 
in the symplectic structure.
While this saves us the work of specifying a local symplectic structure
on~$T^*S^\ell$, the number $\frac{1}{2} m_{\sigma} (m_{\sigma} - 1)$ of
global co-ordinates~$L_{ik}$ exceeds the dimension $2 m_{\sigma} - 4$
of~$G_{m_{\sigma},2}$ and the syzygy
\begin{displaymath}
   J_{\sigma}^2 = \sum_{i < k \in I_{\sigma}} L_{ik}^2
\end{displaymath}
for $m_{\sigma} \geq 4$ is accompanied by increasingly more syzygies.
The fact that $J_{\sigma}$ has periodic flow with respect to the
Dirac bracket can be proved as in Theorem~\ref{thm:DiracSn}.
\end{proof}

\noindent
By the separation of variables in the next section  we give a local
symplectic co-ordinate system on $T^*S^\ell$ and are then able to compute
$I_1, \ldots, I_{\ell}$ explicitly.

\begin{theorem} {\em (Energy--Casimir mapping, regular part)}
   The fibre over each weakly regular point of the Energy--Casimir mapping 
   \begin{displaymath}
   \begin{array}{cccc}
      \EC : & T^*\R^{n+1} & \longrightarrow & \R^{\tilde\ell+2} \\
      & (\xx,\yy) & \mapsto &  (H, J_0, \dots, J_{\tilde\ell})
   \end{array}
   \end{displaymath}
   is given by
   \begin{displaymath}
     {\cal F}_{h,j} = \EC^{-1} ( h,j ) = H^{-1}(h) \cap 
          \bigcup_{\tilde \mu \in G \cdot \mu} \J^{-1}(\tilde \mu) \,.
   \end{displaymath}
   The structure of ${\cal F}_{h,j}$ is that of a double bundle in view of the 
   two properties
   \begin{displaymath}
          \J( {\cal F}_{h,j} ) = \prod_{\sigma = 0}^{\ell} G_{m_{\sigma}, 2},
          \qquad
          R(  {\cal F}_{h,j} ) =  H^{-1}(h) \cap J^{-1}(j) \cap R(T^*S^n)
          = N_{h,j}^{\ell}
   \end{displaymath}
   where $R$ is the reduction mapping of Theorem~\ref{thm:RegRed}.
   The fibre of each of the two mappings is a $\T^{\tilde{\ell} + 1}$
   bundle over the image of the other one, yielding the diagram

   \begin{picture}(120,130)
      \put(13,20){$N_{h,j}^{\ell}$}
      \put(58,90){\vector(-1,-2){25}}
      \put(55,100){${\cal F}_{h,j}$}
      \put(73,90){\vector(1,-2){25}}
      \put(83,20){$\prod G_{m_{\sigma}, 2}$.}
      \put(48,60){\vector(2,-1){43}}
      \put(33,60){$R$}
      \put(83,60){\vector(-2,-1){43}}
      \put(88,60){$\J$}
      \put(55,60){$\T^{\tilde{\ell} + 1}$}
   \end{picture}

   \noindent
   Relative equilibria are given by the critical points of the amended
   potential $V_{\mu}$: $\xi_{\sigma} = 0$ for $m_{\sigma} = 1$ and
   $\xi_{\sigma}^2 = j_{\sigma}/\omega_\sigma$,
   $\omega_\sigma = \sqrt{b_{\sigma}-\beta}$ otherwise,
   where $\beta$ is determined from $\sum \xi_{\sigma}^2 = 1$.
   The corresponding critical value of the Energy--Casimir mapping is 
   \begin{displaymath}
      (h,j)  =  \left( \sum j_{\sigma}
      (\omega_{\sigma} + b_\sigma / \omega_{\sigma}), j \right) \,.
   \end{displaymath}
   There are no other critical values of~$\EC$ in the range
   \begin{displaymath}
      \left\{ (h, j) \in \R^{\tilde{\ell} + 1} : j_{\sigma} > 0
      \, \forall_{\varsigma = 0, \ldots, \tilde{\ell}}  \right\}
   \end{displaymath}
   of regular reduction.
\end{theorem}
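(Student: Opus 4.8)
The plan is to split the statement into two essentially independent parts: the topological description of the fibre $\mathcal{F}_{h,j}$ as a double bundle, and the analytic determination of the relative equilibria together with the claim that they exhaust the critical values in the regular range. The first part comes almost for free from the single-group results of Section~\ref{orthogonalgroupactions}, applied factor by factor, whereas the second part carries the genuine content and is where I expect the real work to lie.

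For the double bundle I would proceed as follows. Because $\J$ is the direct product of the momentum maps of the factors $O(m_\sigma)$ with $m_\sigma \ge 2$, its restriction to a single joint coadjoint orbit factors, by Lemma~\ref{lem:Grassmann}, as the product $\prod_\sigma G_{m_\sigma,2}$; the embedding Casimirs $C_1 = \sum_\sigma \xi_\sigma^2 = 1$ and $C_2 = \sum_\sigma \xi_\sigma \eta_\sigma = 0$ are functions of the reduced base variables alone and hence do not interfere with this product in the momentum directions. This yields the right leg $\J(\mathcal{F}_{h,j}) = \prod_\sigma G_{m_\sigma,2}$. The left leg is the reduction map $R$ of Theorem~\ref{thm:RegRed}, whose image is by definition $N^\ell_{h,j}$. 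The common fibre is generated by the $\tilde\ell + 1$ flows of the actions $J_\sigma$ with $m_\sigma \ge 2$: each is $2\pi$-periodic, exactly as in Lemma~\ref{lem:nekho} and Theorem~\ref{thm:DiracSn}, and they commute because they act on disjoint coordinate blocks, so together they generate a $\T^{\tilde\ell + 1}$-action whose orbits are simultaneously the fibres of $R$ over $\prod_\sigma G_{m_\sigma,2}$ and of $\J$ over $N^\ell_{h,j}$. As in the one-factor case, one should resist writing $\mathcal{F}_{h,j}$ as a global product, since the circle bundles $V_{m_\sigma,2}$ need not admit a global section --- for $m_\sigma = 3$ this is the Hopf bundle $V_{3,2} = SO(3)$ over $G_{3,2}$.

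For the relative equilibria I would use that, the reduction being regular on this range, a relative equilibrium of the full system is precisely an equilibrium of the reduced Hamiltonian~\eqref{eqn:HamRed}: that is, a critical point of the amended potential $V_\mu$ on the sphere $\{\sum_\sigma \xi_\sigma^2 = 1\}$ together with $\eta_\sigma = 0$ for all $\sigma$. Introducing a Lagrange multiplier $\beta$ for the constraint and differentiating $V_\mu = \tfrac{1}{2}\sum_\sigma (b_\sigma \xi_\sigma^2 + W_\sigma / \xi_\sigma^2)$ yields, for each block, $(b_\sigma - \beta)\xi_\sigma = W_\sigma / \xi_\sigma^3$, hence $\xi_\sigma^2 = j_\sigma / \omega_\sigma$ with $\omega_\sigma = \sqrt{b_\sigma - \beta}$ when $m_\sigma \ge 2$ (using $W_\sigma = j_\sigma^2$); when $m_\sigma = 1$ the $W_\sigma$-term is absent and the same condition forces $\xi_\sigma = 0$. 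The multiplier $\beta$ is then fixed implicitly by feeding these back into $\sum_\sigma \xi_\sigma^2 = 1$. Substituting $\eta_\sigma = 0$ and $\xi_\sigma^2 = j_\sigma / \omega_\sigma$ into $H = V_\mu$ collapses each block to a multiple of $j_\sigma(\omega_\sigma + b_\sigma / \omega_\sigma)$, reproducing the stated critical value $(h,j)$.

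Finally, to see that these are the only critical values for $j_\sigma > 0$, I would first reduce the rank condition for $\EC$ to the reduced problem: since the reduction is regular here, a point is critical for $\EC$ exactly when $dH$ becomes dependent on the $dJ_\sigma$ and the differentials of the embedding Casimirs, which descends to criticality of $V_\mu$ on the reduced sphere --- the coadjoint-orbit and torus directions never contribute a critical direction. Uniqueness in the reduced problem then follows from a monotonicity argument: on $\beta \in \opin{-\infty}{b_0}$ the map $\beta \mapsto \sum_\sigma j_\sigma / \sqrt{b_\sigma - \beta}$ is strictly increasing from $0$ to $+\infty$, so $\sum_\sigma \xi_\sigma^2 = 1$ has exactly one solution, and $j_\sigma > 0$ forces $\xi_\sigma > 0$ so that no further critical points hide on the boundary $\xi_\sigma = 0$. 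The step I expect to be the main obstacle is precisely this reduction of criticality: one must check cleanly that on the regular range the differentials of the $J_\sigma$ together with $C_1, C_2$ are independent and cut out their common level set transversally, so that the rank drop of $\EC$ is genuinely governed by the single-multiplier problem for $V_\mu$ and cannot be produced by hidden degeneracies along the Gra{\ss}mannian factors.
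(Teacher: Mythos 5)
Your proposal takes the same route as the paper's own (much terser) proof. The bundle structure is assembled factor--by--factor from Section~\ref{orthogonalgroupactions}, exactly as the paper does in one qualitative paragraph, and the relative equilibria come from the same Lagrange--multiplier computation: the paper writes $\nabla H = \frac{1}{2}\beta \nabla C_1 + \delta \nabla C_2$, obtains $\etab = 0$ and $\delta = 0$ from the momentum components, and arrives at precisely your block equations $(b_\sigma - \beta)\,\xi_\sigma = j_\sigma^2/\xi_\sigma^3$. On the final claim ("no other critical values") you actually go beyond the paper, whose proof ends with "which gives the result" and never argues exhaustiveness; your monotonicity argument for the uniqueness of $\beta$ is a genuine addition.

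That added argument, however, has a genuine gap located exactly at the multiplicity--one factors. For $m_\sigma = 1$ the stationarity condition is $(b_\sigma - \beta)\,\xi_\sigma = 0$, and this does \emph{not} "force $\xi_\sigma = 0$": there is a second branch $\beta = b_\sigma$, $\xi_\sigma \neq 0$, admissible whenever $b_\sigma < b_\tau$ for every $\tau$ with $m_\tau \geq 2$ and $\sum_{m_\tau \geq 2} j_\tau/\sqrt{b_\tau - b_\sigma} < 1$. Your restriction of $\beta$ to the interval $(-\infty, b_0)$ is justified only when $m_0 \geq 2$; the blocks with $m_\tau \geq 2$ and $j_\tau > 0$ only force $\beta < \min\{ b_\tau : m_\tau \geq 2 \}$. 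These extra relative equilibria are not exotic: for $n=2$, $m_0 = 1$, $m_1 = 2$ (the quadratic spherical pendulum) they are the circles of steady rotation near the poles $\pm \e_0$, with critical values $h = \frac{1}{2} b_0 + j_1 \sqrt{b_1 - b_0}$ in the normalisation $H = \frac{1}{2}\scap{\etab}{\etab} + V_\mu$; these lie squarely in the range $j_1 > 0$ and in fact form the \emph{boundary} of the image of $\EC$ for small $j_1$, while the equatorial family $\xi_0 = 0$ that you (and the theorem) list gives critical values in the interior there. So "no further critical points hide on the boundary $\xi_\sigma = 0$" fails as stated. To be fair, the paper's own proof silently discards the same branch, and your argument is complete and correct in the case $\tilde{\ell} = \ell$ (all $m_\sigma \geq 2$), where $j_\sigma > 0$ for every $\sigma$ really does force $\beta < b_0$; but whenever some $m_\sigma = 1$ the completeness claim requires the extra branch $\beta = b_\sigma$ to be either listed among the relative equilibria or excluded by an additional hypothesis.
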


\begin{proof}
The double bundle structure appears because we chose to treat the actions $J$
that come from the group $\widetilde{G}$ apart from the actions $I$ that
appear in the reduced system.
In the end we obtain a $\T^{\ell + \tilde \ell + 1}$--bundle, but for now we
only understand the actions $J$ because they are related to the Momentum
mapping of~$\widetilde{G}$.
In particular for the relative equilibria (relative to $\widetilde{G}$!) this
splitting is natural.

Relative equilibria are solutions of
$\nabla H =  \frac12 \beta \nabla C_1 + \delta \nabla C_2$.
Since $C_2 = 0$ the last $n$ components of these equations give
$\etab = 0$ and $\delta = 0$.
Hence $\nabla_\xi H =  \beta \xib$ needs to be satisfied. 
This leads to
$ \xi_{\sigma} b_{\sigma} -  j_{\sigma}^2 / \xi_{\sigma}^3
 -  \beta \xi_{\sigma} = 0$, 
which gives the result.
The multiplier $\beta$ is determined by the constraint $C_1  = 1$.
\end{proof}

\begin{lemma}
The critical energy $h$ of the relative equilibria is a convex function of the momenta $j_\sigma$.
\end{lemma}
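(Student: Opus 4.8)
The plan is to verify convexity by computing the Hessian of $h$ directly, using the explicit description of the relative equilibria from the preceding theorem; throughout, $\sigma$ ranges only over the groups with $m_\sigma\ge 2$, for which $j_\sigma>0$ and $\omega_\sigma=\sqrt{b_\sigma-\beta}$. First I would eliminate $b_\sigma$ in favour of $\omega_\sigma$ via $b_\sigma=\omega_\sigma^2+\beta$, so that $\omega_\sigma+b_\sigma/\omega_\sigma=2\omega_\sigma+\beta/\omega_\sigma$. Inserting this into the critical value and using the defining constraint $\sum_\sigma j_\sigma/\omega_\sigma=1$ collapses the energy to
\[
   h \;=\; 2\sum_\sigma j_\sigma\,\omega_\sigma \;+\; \beta \,, \qquad \omega_\sigma=\sqrt{b_\sigma-\beta}\,.
\]
Here $\beta=\beta(j)$ is the unique root, below the smallest relevant $b_\sigma$, of $\sum_\sigma j_\sigma/\omega_\sigma=1$: the left-hand side is strictly increasing in $\beta$ (its $\beta$--derivative is $\tfrac12\sum_\sigma j_\sigma/\omega_\sigma^3>0$), tends to $0$ as $\beta\to-\infty$ and to $+\infty$ as $\beta$ approaches that smallest $b_\sigma$, so by the implicit function theorem $\beta(j)$ is smooth on the regular range $\{j_\sigma>0\}$.

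Next I would compute the gradient, carrying the implicit dependence $\beta(j)$. Since $\partial\omega_\sigma/\partial j_\tau=-\tfrac{1}{2\omega_\sigma}\,\partial\beta/\partial j_\tau$, differentiating the collapsed expression gives
\[
   \frac{\partial h}{\partial j_\tau} \;=\; 2\omega_\tau \;-\; \frac{\partial\beta}{\partial j_\tau}\sum_\sigma\frac{j_\sigma}{\omega_\sigma} \;+\; \frac{\partial\beta}{\partial j_\tau} \;=\; 2\omega_\tau \,,
\]
because the two $\partial\beta/\partial j_\tau$ terms cancel upon using $\sum_\sigma j_\sigma/\omega_\sigma=1$ once more; this cancellation is the envelope theorem in disguise. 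Differentiating the constraint itself yields $\partial\beta/\partial j_\rho=-2/(\omega_\rho S)$ with $S:=\sum_\sigma j_\sigma/\omega_\sigma^3>0$, so that
\[
   \frac{\partial^2 h}{\partial j_\rho\,\partial j_\tau} \;=\; 2\,\frac{\partial\omega_\tau}{\partial j_\rho} \;=\; -\frac{1}{\omega_\tau}\,\frac{\partial\beta}{\partial j_\rho} \;=\; \frac{2}{S}\,\frac{1}{\omega_\tau\,\omega_\rho}\,.
\]
Thus the Hessian equals $\tfrac{2}{S}\,vv^{t}$ with $v=(1/\omega_\sigma)_\sigma$ and $S>0$: a positive--semidefinite rank--one matrix. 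Hence $h$ is convex (indeed convex but degenerate, its graph being ruled along the directions orthogonal to $v$, which reflects the one--parameter nature of the relative--equilibrium branches). Note that the overall constant in $h$ does not affect this conclusion, so the argument is robust against the factor conventions in $V_\mu$.

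The computation is essentially routine once the implicit differentiation of $\beta$ is organised, so the only real care is the bookkeeping of the $\beta(j)$ terms and spotting that everything collapses to the outer product $vv^{t}$. A conceptual cross--check, which also explains why convexity had to hold, is to read $h$ as a partial minimisation. Restricting to the invariant subspace spanned by the groups with $m_\sigma\ge2$ (on which $\xi_\sigma=0$ for $m_\sigma=1$), the relative equilibrium is the unique interior minimiser over the simplex $\{u_\sigma\ge0,\ \sum_\sigma u_\sigma=1\}$ of the amended potential $f(u,j)=\sum_\sigma(\tfrac12 b_\sigma u_\sigma+W_\sigma/u_\sigma)$ written in the squared radii $u_\sigma=\xi_\sigma^2$, so $h(j)=\min_u f(u,j)$. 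Each summand $W_\sigma/u_\sigma=j_\sigma^2/u_\sigma$ is the quadratic--over--linear function, jointly convex in $(u_\sigma,j_\sigma)$ for $u_\sigma>0$, and the linear terms $\tfrac12 b_\sigma u_\sigma$ are harmless; hence $f$ is jointly convex, and partial minimisation of a jointly convex function over a convex set is convex. The subtle point in this route, and where I would spend the most care, is justifying that the branch of relative equilibria considered here is exactly that interior minimiser, with all $j_\sigma>0$ forcing the minimum off the boundary of the simplex.
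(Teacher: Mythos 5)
Your proposal is correct and takes essentially the same route as the paper: implicit differentiation of the constraint $\sum_\sigma j_\sigma/\omega_\sigma = 1$ giving the gradient $\partial h/\partial j_\sigma = 2\omega_\sigma$ (the envelope-type cancellation) and the rank-one positive semidefinite Hessian $\frac{\partial^2 h}{\partial j_\sigma \partial j_\tau} = \frac{2}{\omega_\sigma\omega_\tau}\bigl(\sum_\nu j_\nu/\omega_\nu^3\bigr)^{-1}$, which is exactly the paper's computation; your collapsed expression $h = 2\sum_\sigma j_\sigma\omega_\sigma + \beta$ is just the paper's critical value rewritten via $b_\sigma = \omega_\sigma^2 + \beta$ and the constraint. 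Your two additions --- the monotonicity argument establishing smoothness of $\beta(j)$ on the regular range, and the cross-check reading $h$ as a partial minimisation of the jointly convex amended potential (quadratic-over-linear in $(u_\sigma, j_\sigma)$) --- are sound and go slightly beyond what the paper records, but the core argument is the same.
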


\begin{proof}
The function $h(j)$ is implicitly defined by $h = f(j, \beta)$ and
$g(h, j, \beta) = 0$, where $f$ is given by the Energy--Casimir mapping
and $g$ is given by the constraint.
We are going to prove that the Hessian of $h(j)$ is positive semi-definite.
By implicit differentiation we find
\[
\partial_{j_\sigma} h = \partial_{j_\sigma} f - \partial_{\beta} f
\frac{\partial_{j_\sigma} g}{\partial_{\beta} g}
= \omega_\sigma + b_\sigma / \omega_\sigma - \beta/ \omega_\sigma
= 2 \sqrt{b_\sigma - \beta} = 2 \omega_\sigma
\] 
where the identity $\partial_{\beta} f / \partial_{\beta} g = \beta$
has been used.
To compute the second derivative
we therefore need the derivative of~$\beta$.
It is given by 
\[
\partial_{j_\sigma} \beta = - \frac{\partial_{j_\sigma} g}{\partial_{\beta} g}
= -\frac{2}{\omega_\sigma}
\left( \sum_{\nu=0}^\ell \frac{ j_\nu }{ \omega_\nu^3} \right)^{-1}  \,,
\]
Combining the two first derivatives gives the entries of the Hessian as
\[
\frac{\partial^2 h}{\partial j_\sigma \partial j_\tau} =
\frac{2}{\omega_\sigma \omega_\tau}
\left( \sum_{\nu=0}^\ell \frac{ j_\nu }{ \omega_\nu^3} \right)^{-1}  \,.
\]
This is a rank one matrix with one positive eigenvalue, and hence $h(j)$ is
a convex function.
\end{proof}

\noindent
In order to describe the invariant manifolds of the invariant sets contained
in lower dimensional invariant Neumann systems defined by $j_{\sigma} = 0$
for some (or all) $\sigma$ singular reduction is used.

\begin{theorem} {\em (Singular Reduction)} \label{thm:SingRed}
   The Hilbert mapping $\chi$ of the $G$--action is the direct
   product of the Hilbert mappings of the individual factors $O(m_{\sigma})$,
   as given in Lemma~\ref{lem:SingRedOm}. 
   The reduction mapping $\chi : T^*\R^{n+1} \longrightarrow \R^{3(\ell+1)}$
   is given by
   \begin{displaymath}
      (\scap{\Pro_{\sigma} \xx}{\Pro_{\sigma} \xx},
       \scap{\Pro_{\sigma} \yy}{\Pro_{\sigma} \yy},
       \scap{\Pro_{\sigma} \xx}{\Pro_{\sigma} \yy})
      = (2V_{\sigma}, 2T_{\sigma}, S_{\sigma})
   \end{displaymath}
   with syzygies $2 V_{\sigma} T_{\sigma} = \frac{1}{2} S_{\sigma}^2$
   whenever $m_{\sigma} = 1$.
   The reduced brackets are 
   \begin{displaymath}
   \begin{aligned}
      \{ V_{\sigma}, T_{\tau} \} & =
      S_{\tau}(\delta_{\sigma \tau} - \frac{2 V_{\sigma}}{C_1}), &
      \{ V_{\sigma}, S_{\tau} \} & =
      2 V_{\sigma}( \delta_{\sigma \tau} - \frac{2 V_{\tau}}{C_1}), \\
      \{ T_{\sigma}, S_{\tau} \} & =
      - 2 T_{\sigma} ( \delta_{\sigma \tau} - \frac{2 V_{\tau}}{C_1}), \quad &
      \{ T_{\sigma}, T_{\tau} \} & =
      \frac{2 T_{\sigma} S_{\tau} - 2 T_{\tau} S_{\sigma}}{C_1}, \\
      \{ S_{\sigma}, S_{\tau} \} & = 0, &
      \{ V_{\sigma}, V_{\tau} \} & = 0 \,.
   \end{aligned}
   \end{displaymath}
   This bracket has rank $2 \ell$ and the $\tilde{\ell} + 3$ Casimirs
   \begin{displaymath}
      C_1 = 2 \sum V_{\sigma}, \quad
      C_2 = \sum S_{\sigma}  \quad \mbox{and} \quad
      W_{\sigma} = 4 V_{\sigma} T_{\sigma} - S_{\sigma}^2 \,.
   \end{displaymath}
   From the latter we obtain at nonzero values the $\tilde{\ell} + 1$
   actions $J_{\sigma} = \sqrt{W_{\sigma}}$.
   The reduced Hamiltonian of the degenerate Neumann system reads
   \begin{equation}  \label{eqn:redegNeu}
      \hat{H}(V, T, S) = \sum_{\sigma = 0}^{\ell} T_{\sigma}
      + b_{\sigma} V_{\sigma} \,.
   \end{equation}
\end{theorem}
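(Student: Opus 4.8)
The plan is to reduce every claim to the single--factor analysis of Lemma~\ref{lem:SingRedOm}, performed separately on each coordinate block $I_\sigma$, the only new ingredient being that the relevant Poisson structure is the Dirac bracket~\eqref{eqn:Dirac} rather than the canonical one. For the statement that $\chi$ is a direct product I would appeal to the first fundamental theorem of invariant theory for $O(m_\sigma)$ acting on two copies of $\R^{m_\sigma}$: the basic invariants are the three inner products $\scap{\Pro_\sigma\xx}{\Pro_\sigma\xx}$, $\scap{\Pro_\sigma\yy}{\Pro_\sigma\yy}$, $\scap{\Pro_\sigma\xx}{\Pro_\sigma\yy}$, i.e.\ $(2V_\sigma,2T_\sigma,S_\sigma)$. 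Because the factors of $G$ act on pairwise disjoint blocks, a $G$--invariant is invariant under each factor separately and hence a function of these $3(\ell+1)$ quantities alone; this is precisely the assertion that the Hilbert mapping assembles from the factor maps. The syzygy at $m_\sigma=1$ is then immediate, since a one--dimensional block carries no angular momentum: with $I_\sigma=\{i\}$ one has $2V_\sigma T_\sigma=\tfrac12 x_i^2y_i^2=\tfrac12 S_\sigma^2$.

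The computational core is the bracket table. My first step would be to record the \emph{canonical} brackets of the invariants, which by a short Leibniz computation are purely block--diagonal: $[V_\sigma,T_\tau]=\delta_{\sigma\tau}S_\sigma$, $[V_\sigma,S_\tau]=2\delta_{\sigma\tau}V_\sigma$, $[T_\sigma,S_\tau]=-2\delta_{\sigma\tau}T_\sigma$, while the $V$--$V$, $T$--$T$ and $S$--$S$ brackets vanish. I would then feed these into the Dirac formula~\eqref{eqn:genDirac}, for which I also need the brackets with the constraints, namely $[V_\sigma,C_1]=0$, $[V_\sigma,C_2]=2V_\sigma$, $[T_\sigma,C_1]=-2S_\sigma$, $[T_\sigma,C_2]=-2T_\sigma$, $[S_\sigma,C_1]=-4V_\sigma$ and $[S_\sigma,C_2]=0$. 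The point to stress is that all the off--diagonal ($\sigma\neq\tau$) coupling in the stated table is produced entirely by the two Dirac correction terms; for instance $\{T_\sigma,T_\tau\}$ comes out as $\tfrac{1}{2C_1}\bigl([T_\sigma,C_1][C_2,T_\tau]-[T_\sigma,C_2][C_1,T_\tau]\bigr)=(2T_\sigma S_\tau-2T_\tau S_\sigma)/C_1$, and the remaining entries follow the same pattern.

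For the Casimirs I would treat the three families separately. That $C_1$ and $C_2$ are Casimirs is inherited from the construction of the Dirac bracket. For $W_\sigma=4V_\sigma T_\sigma-S_\sigma^2=\sum_{i<k\in I_\sigma}L_{ik}^2$ I would use a structural shortcut rather than brute force: since $W_\sigma$ is built from angular momenta and $[L_{ik},C_1]=[L_{ik},C_2]=0$, both Dirac correction terms drop out, so $\{W_\sigma,\,\cdot\,\}=[W_\sigma,\,\cdot\,]$; and the canonical bracket of the $\Ad^*$--invariant $W_\sigma$ with every invariant vanishes (within its own block this is the $\mathfrak{sl}(2,\R)$ Casimir of Lemma~\ref{lem:SingRedOm}, and other blocks involve disjoint variables). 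This isolates exactly the $\tilde\ell+1$ nontrivial Casimirs $W_\sigma$, those with $m_\sigma=1$ being identically zero by the syzygy, hence $\tilde\ell+3$ Casimirs in all. The actions $J_\sigma=\sqrt{W_\sigma}$ and the reduced Hamiltonian $\hat H=\sum_\sigma(T_\sigma+b_\sigma V_\sigma)$ are then read off directly, the latter by simply regrouping $H=\tfrac12\scap{\yy}{\yy}+\tfrac12\scap{\xx}{{\bf A}\xx}$ over the blocks.

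Finally the rank, which I expect to be the one genuinely delicate bookkeeping point, because the $m_\sigma=1$ blocks are only two--dimensional (they sit on the cone $W_\sigma=0$) whereas the $m_\sigma\geq2$ blocks are three--dimensional. The cleanest argument is to match the generic symplectic leaf with the reduced phase space already identified in Theorem~\ref{thm:RegRed}: its dimension is $2\ell$, so the Poisson bivector has rank $2\ell$ at a generic point. One can cross--check this by counting, $\bigl(3(\tilde\ell+1)+2(\ell-\tilde\ell)\bigr)-(\tilde\ell+3)=2\ell$, the effective dimension minus the number of independent Casimirs, which confirms both the rank and that the listed Casimirs are complete.
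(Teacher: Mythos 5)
Your proposal is correct and follows essentially the same route as the paper: the canonical per-block $\mathfrak{sl}(2,\R)$ brackets from Lemma~\ref{lem:SingRedOm} plus the Dirac correction terms of~\eqref{eqn:genDirac}, which are computable because $C_1$ and $C_2$ are themselves expressions in the invariants. Your explicit verification of the Casimirs (via $[L_{ik},C_1]=[L_{ik},C_2]=0$) and of the rank (by matching the generic leaf with the $2\ell$--dimensional reduced space of Theorem~\ref{thm:RegRed} and a Casimir count) correctly fills in details the paper's proof leaves implicit.
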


\begin{proof}
The brackets with equal indices on the left hand side are found by direct
calculation using \eqref{eqn:genDirac}.
Only the additional terms need to be computed, for the $[ .. \, , .. ]$
brackets see Lemma~\ref{lem:SingRedOm}.
Since $C_1 = 2V$ and $C_2 = S$ the additional terms are also already known.
The other brackets follow similarly by direct computation.
\end{proof}

\begin{remark} \label{rem:SingRed}
For the co-ordinates with $m_{\sigma} = 1$ we may as well keep the
original $x_{\sigma}, y_{\sigma}$ and avoid the $\ell - \tilde{\ell}$
syzygies.
Note that the rank of the bracket remains $2 \ell$ with $\tilde{\ell} + 3$
Casimirs.
All $S_{\sigma}$ can be eliminated using the syzygies and fixing the
values $w_{\sigma}$ of the Casimirs~$W_{\sigma}$, albeit at the
cost of extracting a square root.
The two sheets of $\pm S_{\sigma}$ are glued together smoothly at
$S_{\sigma} = 0$.
In addition using $C_1, C_2$ this projection of the reduced space
is of dimension $2\ell$, the same as the dimension of the regularly
reduced phase space.
\end{remark}

\section{The Rosochatius system}
\label{rosochatius}

Regular reduction of the degenerate Neumann system leads 
 to an open subset of~$T^*S^{\ell}$ as described in Theorem~\ref{thm:RegRed}.
The reduced phase spaces $\J^{-1}(\mu)/G_{\mu}$ can be interpreted
as the regular symplectic leaves of~$T^*\R^{n+1}/G$, which we
embedded in~$\R^{3(\ell + 3)}$ in Theorem~\ref{thm:SingRed}.
The remaining symplectic leaves of $T^*\R^{n+1}/G$ can be given
an interpretation of ``filling up the remaining part''
of~$T^*S^{\ell}$, as already indicated in Remark~\ref{rem:SingRed}.

Theorem~\ref{thm:RegRed} suggests to consider the following ``unfolding''
of the reduced Neumann system on $T^*S^{\ell} \subseteq T^*\R^{\ell + 1}$ with
co-ordinates $(\xi, \eta)$ satisfying the Dirac brackets~\eqref{eqn:Dirac}
in dimension $2\ell + 2$.
The Hamiltonian is given by
\begin{equation}  \label{eqn:rosochatius}
   H(\xi, \eta)  =  \frac{1}{2} \sum_{\sigma = 0}^{\ell}
   \eta_{\sigma}^2 + b_{\sigma} \xi_{\sigma}^2 +
   \frac{w_{\sigma}}{\xi_{\sigma}^2} \,,
\end{equation}
where $w_{\sigma}$, $\sigma = 0, \ldots, \ell$ are the unfolding parameters.
When reducing the degenerate Neumann system we find
$w_{\sigma} = j_{\sigma}^2 \geq 0$, and it is this same condition that
is used in~\cite{rosochatius77, Moser80b, Macfarlane92} to define the
Rosochatius system on~$T^*S^{\ell}$.
Since $b_0 < \ldots < b_\ell$ the symmetry group
$\Z_2^{\ell +  1} = O(1) \times \ldots \times O(1)$ is discrete.

\begin{theorem}  \label{thm:Rosochatius}
   Singular reduction of the $\Z_2^{\ell +  1}$--symmetry of the
   Rosochatius system leads to the same reduced dynamics as the
   singularly reduced degenerate Neumann system.
   The Hilbert mapping 
   $\chi : T^*\R^{\ell+1} \longrightarrow \R^{3(\ell+1)}$
   is given by the basic invariants
   \begin{displaymath}
      V_{\sigma} = \frac{\xi_{\sigma}^2}{2} , \quad
      T_{\sigma} = \frac{\eta_{\sigma}^2}{2}
      + \frac{w_{\sigma}}{2 \xi_{\sigma}^2} ,
      \quad \mbox{and} \quad
      S_{\sigma} = \xi_{\sigma} \eta_{\sigma}
   \end{displaymath}
   which satisfy the Poisson bracket relations of Theorem~\ref{thm:SingRed}
   and turn the Hamiltonian~\eqref{eqn:redegNeu} into~\eqref{eqn:rosochatius}.
\end{theorem}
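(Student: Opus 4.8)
The plan is to exhibit $V_\sigma,T_\sigma,S_\sigma$ as invariants of the $\Z_2^{\ell+1}$--action and then to check three things: that they close into the Poisson algebra of Theorem~\ref{thm:SingRed}, that they take the prescribed Casimir values, and that they carry the reduced Hamiltonian \eqref{eqn:redegNeu} to the Rosochatius Hamiltonian \eqref{eqn:rosochatius}. Each factor $O(1)\cong\Z_2$ acts on $T^*\R^{\ell+1}$ by the cotangent lift of a reflection, i.e.\ $(\xi_\sigma,\eta_\sigma)\mapsto(-\xi_\sigma,-\eta_\sigma)$ componentwise. On the open set where all $\xi_\sigma\neq0$ the functions $V_\sigma=\tfrac12\xi_\sigma^2$, $S_\sigma=\xi_\sigma\eta_\sigma$ and $T_\sigma=\tfrac12\eta_\sigma^2+w_\sigma/(2\xi_\sigma^2)$ are manifestly invariant; the first two are the polynomial invariants, while $T_\sigma$ augments the kinetic invariant $\tfrac12\eta_\sigma^2$ by the centrifugal Rosochatius term (which, being a function of $V_\sigma$ alone, generates the same localized invariant ring). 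Once the three items above are established, the singularly reduced Rosochatius system sits inside the same $\R^{3(\ell+1)}$ with the same Poisson structure, is cut out at the same Casimir levels, and carries the same Hamiltonian as the singularly reduced degenerate Neumann system of Theorem~\ref{thm:SingRed}, so the two reduced dynamics coincide.

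The Hamiltonian and Casimir computations are immediate substitutions. Writing out $\sum_\sigma(T_\sigma+b_\sigma V_\sigma)=\tfrac12\sum_\sigma(\eta_\sigma^2+b_\sigma\xi_\sigma^2+w_\sigma/\xi_\sigma^2)$ reproduces \eqref{eqn:rosochatius} verbatim. For the Casimir one finds
\[
   W_\sigma = 4V_\sigma T_\sigma - S_\sigma^2
   = \xi_\sigma^2\eta_\sigma^2 + w_\sigma - \xi_\sigma^2\eta_\sigma^2 = w_\sigma ,
\]
so the effect of the centrifugal term is exactly to move the syzygy cone $W_\sigma=0$ of the bare $O(1)$--reduction onto the hyperboloid $W_\sigma=w_\sigma$. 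Setting $w_\sigma=j_\sigma^2$ this is precisely the reduced phase space carrying momentum $j_\sigma$ in Theorem~\ref{thm:SingRed}, and this matching of the reduced phase spaces is the content of the claimed ``unfolding''.

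The brackets I would verify by direct computation from the Dirac bracket \eqref{eqn:genDirac} on $T^*\R^{\ell+1}$ in the variables $(\xi,\eta)$, with $C_1=\sum_\sigma\xi_\sigma^2$ and $C_2=\sum_\sigma\xi_\sigma\eta_\sigma$. The decisive observation is that the Rosochatius term is a function of $\xi$ alone, so it contributes nothing through $\partial/\partial\eta_\sigma$ and therefore drops out of every canonical bracket except $[T_\sigma,C_2]$. In that one bracket $\partial_{\xi_\sigma}\!\big(w_\sigma/(2\xi_\sigma^2)\big)=-w_\sigma/\xi_\sigma^3$ combines with the kinetic contribution to give exactly $[T_\sigma,C_2]=-2T_\sigma$ for the full $T_\sigma$. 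Together with $[V_\sigma,C_1]=0$, $[V_\sigma,C_2]=2V_\sigma$ and $[T_\sigma,C_1]=-2S_\sigma$, feeding these into \eqref{eqn:genDirac} yields for instance
\[
   \{V_\sigma,T_\tau\} = S_\tau\Big(\delta_{\sigma\tau}-\frac{2V_\sigma}{C_1}\Big),
   \qquad
   \{T_\sigma,T_\tau\} = \frac{2T_\sigma S_\tau-2T_\tau S_\sigma}{C_1},
\]
and the remaining relations of Theorem~\ref{thm:SingRed} follow in the same way, with the same rank $2\ell$ and the same Casimirs $C_1$, $C_2$, $W_\sigma$.

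The step that genuinely requires attention --- the main point rather than a true obstacle --- is to confirm that the non-polynomial centrifugal term does not corrupt this Poisson algebra. This is guaranteed by the single identity $[T_\sigma,C_2]=-2T_\sigma$: it is exactly because $T_\sigma$ is taken to include $w_\sigma/(2\xi_\sigma^2)$, and not merely the bare kinetic invariant, that the bracket closes on $T_\sigma$ and that the Casimir comes out as $w_\sigma$ rather than $0$. With the algebra, the Casimir values and the Hamiltonian all matched to Theorem~\ref{thm:SingRed}, the two singularly reduced systems are identical.
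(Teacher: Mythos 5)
Your proposal is correct and takes essentially the same route as the paper: a direct verification that the given invariants satisfy the bracket relations of Theorem~\ref{thm:SingRed} and carry \eqref{eqn:redegNeu} into \eqref{eqn:rosochatius} --- the paper computes the sample bracket $\{T_\sigma,T_\tau\}$ with the Dirac relations \eqref{eqn:Dirac} in the $(\xi,\eta)$ variables plus Leibniz, while you unwind the definition \eqref{eqn:genDirac} through the constraint brackets $[\,\cdot\,,C_1]$, $[\,\cdot\,,C_2]$, which is the same computation organized differently, and your key observation (that the closure hinges on the centrifugal term being absorbed into $T_\sigma$, e.g.\ $[T_\sigma,C_2]=-2T_\sigma$) is exactly the point. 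Your explicit identification of the Casimir levels, $W_\sigma = 4V_\sigma T_\sigma - S_\sigma^2 = w_\sigma$, is a worthwhile addition that the paper leaves implicit in the claim that the two reduced dynamics coincide.
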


\begin{proof}
This follows again by direct computation, e.g.\
\begin{eqnarray*}
   \{ T_{\sigma}, T_{\tau} \} & = & \eta_{\sigma} \eta_{\tau}
   \frac{\xi_{\tau} \eta_{\sigma} - \xi_{\sigma} \eta_{\tau}}{C_1} +
   \eta_{\sigma} \frac{w_{\tau}}{\xi_{\tau}^3}
   (\delta_{\sigma \tau} - \frac{\xi_{\sigma} \xi_\tau}{C_1}) -
    \frac{w_{\sigma}}{\xi_{\sigma}^3} \eta_{\tau}
   (\delta_{\sigma \tau} - \frac{\xi_{\sigma} \xi_\tau}{C_1})  \\ & = &
   \frac{2 T_{\sigma} \cdot S_{\tau} - 2 T_{\tau} \cdot S_{\sigma}}{C_1} \,.
\end{eqnarray*}
\end{proof}

\noindent
In this way the Rosochatius system is a $2^{\ell}$--fold
covering of the (singularly) reduced degenerate Neumann system.

\begin{remark}  \label{rem:Rosochatius}
For coefficients $b_{\sigma}$ of the degenerate Neumann system
with $m_{\sigma} = 1$ we may keep the original
$x_{\sigma}, y_{\sigma}$ variables~; for these indices
$w_{\sigma} = 0$ and the Rosochatius system is a
$2^{\tilde{\ell}+1}$--fold covering of the resulting reduced system
with $\widetilde{G}$ reduced instead of~$G$.
For coefficients $b_{\sigma}$ of the degenerate Neumann system
with $m_{\sigma} = 2$ we may similarly choose to reduce with respect
to the $SO(2)$--factor instead of the $O(2)$--factor.
In particular, if $m_0 = \ldots = m_{\ell} = 2$ then singular
reduction of the degenerate Neumann system with respect to the
subgroup $SO(2) \times \ldots \times SO(2)$ of
$G = O(2) \times \ldots \times O(2)$ yields the (unreduced)
Rosochatius system, as had already been remarked
in~\cite{Moser80b, Macfarlane92}.
\end{remark}

\subsection{Separation}
\label{separate}

In this section we consider the reduced system in its own right
and separate the Hamiltonian of the Rosochatius system in
elliptical-spherical co-ordiates.
Note that for the integrability the condition $w_{\sigma} \geq 0$
is not needed.
However, the dynamics with $w_{\sigma} < 0$ is completely different
as then there are solutions that blow up in finite time.

\begin{theorem}  \label{thm:degenerateseparation}
   The Neumann system on $S^{\ell}$ with additional potential
   $\frac{1}{2} \sum w_{\sigma}/\xi_{\sigma}^2$ can be separated
   in elliptical-spherical co-ordinates $u_i$. 
   The general solution of the Hamilton--Jacobi equations is
   \begin{displaymath}
      S =\frac{1}{2} \sum_{i=1}^{\ell} \int^{u_i} \frac{\zeta}{A(z)} \dee z
   \end{displaymath}
   where $A(z) = \prod (z - b_{\sigma})$ and the integral is defined on 
   the real part of the hyperelliptic curve 
   \begin{displaymath}
      \Gamma = \{ (z,\zeta) \in \C^2 : \zeta^2 =
      -  Q(\rho; z)A(z) + \tilde Q(b, w ; z)  \}
   \end{displaymath}
   of genus $\ell$.
   The separation constants are $\rho_i$ in
   \begin{displaymath}
     Q(\rho; z) = z^{\ell} + 2\rho_1 z^{\ell - 1} + \dots + 2 \rho_{\ell}
   \end{displaymath}
   where $\rho_1$ is the value of the energy.
   The polynomial $\tilde{Q}$ is given by 
   \begin{displaymath}
      \tilde{Q}(b, w ; z)  =
      \sum_{\sigma=0}^{\ell} w_{\sigma} \prod_{\tau \neq \sigma}^{\ell}
      (z-b_{\tau})(b_{\sigma}-b_{\tau})\,.
   \end{displaymath}
\end{theorem}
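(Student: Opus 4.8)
The plan is to realise the reduced Rosochatius Hamiltonian~\eqref{eqn:rosochatius} as a St\"ackel system in elliptical-spherical coordinates and then read the separated momenta off a single hyperelliptic curve. First I would introduce the coordinates $u_1,\dots,u_\ell$ on $S^\ell$ as the roots of
\[
\sum_{\sigma=0}^\ell \frac{\xi_\sigma^2}{z-b_\sigma} \;=\; \frac{U(z)}{A(z)}, \qquad U(z)=\prod_{i=1}^\ell(z-u_i),\quad A(z)=\prod_{\sigma=0}^\ell(z-b_\sigma),
\]
with a sign convention fixed once and for all (see the last paragraph). Taking residues at $z=b_\sigma$ inverts this to $\xi_\sigma^2=U(b_\sigma)/A'(b_\sigma)$, and the constraint $\sum_\sigma\xi_\sigma^2=1$ is automatic from the leading coefficient. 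Comparing the coefficient of $z^{\ell-1}$ on both sides of $U(z)=\sum_\sigma\xi_\sigma^2\prod_{\tau\neq\sigma}(z-b_\tau)$ then yields the Neumann potential in the clean trace form $\sum_\sigma b_\sigma\xi_\sigma^2=\sum_\sigma b_\sigma-\sum_i u_i$.

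Next I would compute the metric. Differentiating $\xi_\sigma^2=U(b_\sigma)/A'(b_\sigma)$ gives $\partial\xi_\sigma/\partial u_i=-\tfrac12\xi_\sigma/(b_\sigma-u_i)$, so that $2T=\sum_\sigma\eta_\sigma^2$ becomes $\tfrac14\sum_{i,k}\dot u_i\dot u_k\,M_{ik}$ with $M_{ik}=\sum_\sigma\xi_\sigma^2/[(b_\sigma-u_i)(b_\sigma-u_k)]$. Writing $M_{ik}=\sum_\sigma\res_{z=b_\sigma}\,U(z)/[A(z)(z-u_i)(z-u_k)]$ and using that the total residue (including the one at infinity) vanishes, the off-diagonal terms drop out because $U(u_i)=0$ annihilates the would-be poles at $z=u_i$; one is left with the diagonal St\"ackel metric $g^{ii}=-4A(u_i)/U'(u_i)$. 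The same residue technique handles the Rosochatius term: from $\tilde Q(z)/A(z)=\sum_\sigma w_\sigma A'(b_\sigma)/(z-b_\sigma)$ and $1/\xi_\sigma^2=A'(b_\sigma)/U(b_\sigma)$, summing the residues of $\tilde Q(z)/[A(z)U(z)]$ shows $\sum_\sigma w_\sigma/\xi_\sigma^2=-\sum_i\tilde Q(u_i)/[A(u_i)U'(u_i)]$, which is exactly of separable form and identifies $\tilde Q$ as the degree-$\ell$ polynomial contributed by the extra potential.

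With the Hamiltonian in St\"ackel form I would insert the additive ansatz $S=\sum_i S_i(u_i)$, $p_{u_i}=S_i'(u_i)$, and posit the separated relation $4A(u_i)^2 p_{u_i}^2=-Q(\rho;u_i)A(u_i)+\tilde Q(u_i)$ for each $i$, with $Q(\rho;z)=z^\ell+2\rho_1 z^{\ell-1}+\dots+2\rho_\ell$. Reading these $\ell$ relations as a linear system for the coefficients $\rho_1,\dots,\rho_\ell$ defines the separation constants; applying the Lagrange-interpolation identity $\sum_i Q(\rho;u_i)/U'(u_i)=2\rho_1+\sum_i u_i$ together with the two residue identities above collapses the sum to $H=\rho_1$ (up to the additive constant $\tfrac12\sum_\sigma b_\sigma$ that can be absorbed into the energy origin). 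This is precisely the statement that $p_{u_i}=\zeta(u_i)/(2A(u_i))$ on the curve $\zeta^2=-Q(\rho;z)A(z)+\tilde Q(b,w;z)$, whence $S=\tfrac12\sum_i\int^{u_i}\frac{\zeta}{A(z)}\dee z$. The genus count is then immediate: $\deg(QA)=\ell+(\ell+1)=2\ell+1$ while $\deg\tilde Q=\ell$, so $\zeta^2$ equals a polynomial of odd degree $2\ell+1$, giving a hyperelliptic curve of genus $\ell$.

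The main obstacle I expect is not the conceptual structure --- St\"ackel separability together with the two residue identities --- but the sign and normalisation bookkeeping needed to land on exactly the stated curve $\zeta^2=-QA+\tilde Q$ with $\rho_1=H$. The diagonal metric, the trace formula for the Neumann potential, and the residue formula for the Rosochatius potential all carry signs that depend on the chosen orientation of the elliptical coordinates (whether one writes $\sum\xi_\sigma^2/(z-b_\sigma)$ or $\sum\xi_\sigma^2/(b_\sigma-z)$) and on the ordering $b_0<\dots<b_\ell$; the convention must be fixed at the outset so that $g^{ii}>0$ on the physical range and so that the degree-$\ell$ contribution of the Rosochatius potential enters $\tilde Q$ with the correct sign. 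Verifying that the off-diagonal metric terms and all potential cross terms genuinely cancel --- which is what makes the interpolation identities applicable and the $\rho_k$ well defined --- is the step that requires the most care.
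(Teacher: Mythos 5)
Your construction follows the paper's proof in all essentials: the same coordinates $u_i$ defined as roots of $\sum_\sigma \xi_\sigma^2/(z-b_\sigma)=U(z)/A(z)$, the inversion $\xi_\sigma^2=U(b_\sigma)/A'(b_\sigma)$, the reduction of every term of the Hamiltonian to the St\"ackel form $\sum_i(\cdots)/U'(u_i)$, and Jacobi's interpolation identities (Appendix~\ref{jacobitrick}) to introduce $\rho_1,\dots,\rho_\ell$. The one place you go beyond the paper is the derivation of the diagonal metric $g^{ii}=-4A(u_i)/U'(u_i)$ by summing residues of $U(z)/[A(z)(z-u_i)(z-u_k)]$, where the paper simply cites~\cite{dullin01}; that argument is correct and makes the proof self-contained. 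Your treatment of the Rosochatius term by summing residues of $\tilde Q/(AU)$ is equivalent to the paper's partial-fraction manipulation leading to~\eqref{Qtilde}.

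There is, however, one concrete flaw, and it is exactly the sign issue you deferred: your two key displayed identities contradict each other. Your residue identity $\sum_\sigma w_\sigma/\xi_\sigma^2=-\sum_i \tilde Q(u_i)/[A(u_i)U'(u_i)]$ is correct for the $\tilde Q$ of the theorem statement (indeed $\tilde Q(b_\sigma)=w_\sigma A'(b_\sigma)^2$, so the residues of $\tilde Q/(AU)$ at the $b_\sigma$ are the $w_\sigma/\xi_\sigma^2$, those at the $u_i$ give the other sum, and the residue at infinity vanishes). But the separated relation compatible with that identity is $4A(u_i)^2p_i^2=-Q(\rho;u_i)A(u_i)-\tilde Q(u_i)$, with a minus sign. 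If you posit $+\tilde Q$ as you do, then substituting back into $H=T+V+V_w$ and applying your interpolation identity gives $H=\rho_1+c+2V_w$ rather than $H=\rho_1+c$ (here $c=\frac12\sum_\sigma b_\sigma$ is the harmless shift): the Rosochatius potential doubles instead of cancelling, so the $\rho_k$ defined by your linear system are not constants of motion and the separation fails. A check at $\ell=1$, $b_0=0$, $b_1=1$, $w_0=w_1=w$ makes this concrete: there $\tilde Q\equiv w$, $u=\xi_0^2$, and eliminating $p$ from $2u(1-u)p^2+\frac12(1-u)+\frac{w}{2u(1-u)}=h$ yields $\zeta^2=-(z+2h-1)A(z)-w$, i.e.\ $-QA-\tilde Q$. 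In fairness, the inconsistency originates in the paper itself: the proof of Theorem~\ref{thm:degenerateseparation} works with $\tilde Q(b,w;u_i)=\sum_\sigma\frac{w_\sigma}{b_\sigma-u_i}A(u_i)A'(b_\sigma)$, which is the \emph{negative} of the polynomial displayed in the theorem statement, and it is with that convention (also the one used in Lemma~\ref{lem:discriminantlocus}, where $\tilde Q(b_\sigma)=-w_\sigma A'(b_\sigma)^2$) that the curve reads $\zeta^2=-QA+\tilde Q$. So your proof is repaired by flipping one sign --- either in the definition of $\tilde Q$ or in the equation of the curve --- after which it coincides with the paper's argument.
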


\begin{proof}
The separation proceeds as in the case without additional potential,
see~\cite{Moser80}.
Starting from the Lagrangian in spherical co-ordinates defined by 
\begin{displaymath}
   f(z) = \sum_{\sigma = 0}^{\ell} \frac{ \xi_{\sigma}^2}{z-b_{\sigma}} = 0 \,,
\end{displaymath}
the co-ordinates are the roots $u_i$, $i = 1,\dots, \ell$ of the rational
function $f(z) = 0$.
Because the $b_{\sigma}$ are different, and $f(z)$ has a pole of first
order at each $b_{\sigma}$ the graph of $f(z)$ shows that the roots
$u_{\sigma}$ satisfy
\begin{displaymath}
   b_0 \leq u_1 \leq b_1 \leq \dots \leq u_{\ell} \leq b_{\ell} \,.
\end{displaymath}
The denominator of the rational function $f(z)$ is the polynomial $A(z)$.
Define the polynomial in the numerator of $f(z)$ as $U(z) = \prod (z-u_i)$.
Then we have
\begin{displaymath}
   \xi_{\sigma}^2 = \res_{z=b_{\sigma}} f(z) \dee z
   = \frac{U(b_{\sigma})}{A'(b_{\sigma})} \,.
\end{displaymath}
From the standard approach, see~\cite{dullin01} for the details, we 
know that after introducing the momentum $p_i$ conjugate to $u_i$ 
the kinetic and original Neumann-potential energy read
\begin{displaymath}
\begin{aligned}
   T(u,p) &= \frac{1}{2} \sum_{i=1}^{\ell} \frac{p_i^2}{g_i(u)}
   \quad \mbox{with} \quad
   \frac{1}{g_i(u)} = -4 \frac{A(u_i)}{U'(u_i)}, \\
   V(u) &= \frac{1}{2} \sum_{\sigma = 0}^{\ell} b_{\sigma}
   - \frac{1}{2} \sum_{i=1}^{\ell} u_i\,. 
\end{aligned}
\end{displaymath}
The additional potential $V_w$ in terms of $u$ becomes
\begin{displaymath}
   V_w(u) = \frac{1}{2} \sum_{\sigma = 0}^{\ell} w_{\sigma}
   \frac{A'(b_{\sigma})}{U(b_{\sigma})}
\end{displaymath}
so that
\begin{equation}
\label{thehamiltonianinuandp}
   H(u,p) = T(u,p) + V(u) + V_w(u)
\end{equation}
and the Hamilton--Jacobi equation reads
$H(u, \partial S/ \partial u ) = \rho_1$.
The general solution for $S$ depends on $\ell$ integration constants.
We write the Hamilton--Jacobi equation in the form 
$0 = \sum h(u_i, p_i) / U'(u_i)$ where $h(u_i, p_i)$
only depends on $u_i$ and $p_i$.
The kinetic energy $T(u,p)$ already has this form.
To achieve this form for $V(u)$ and $ \rho_1$ the following two
identities due to Jacobi are used, see Appendix~\ref{jacobitrick}.
For any polynomial $U$ of degree $\ell$ with roots $u_i$ we have
\begin{displaymath}
   \sum_{i=1}^{\ell} u_i = \sum_{i=1}^{\ell} \frac{u_i^{\ell}}{U'(u_i)} \,.
\end{displaymath}
If in addition we have a polynomial
$P(z) = \rho_1 z^{\ell - 1} + \dots + \rho_{\ell}$ with 
arbitrary coefficients then
\begin{displaymath}
  \rho_1 = \sum_{i=1}^{\ell} \frac{ P(u_i) } {U'(u_i)} \,.
\end{displaymath}
It remains to write $V_w(u)$ in the desired form.
Consider the partial fraction decomposition of $\xi_{\sigma}^{-2}$ with 
respect to $b_\sigma$, 
\begin{displaymath}
   \frac{1}{\xi_{\sigma}^2} = \frac{A'(b_{\sigma})}{U(b_{\sigma})} =
   A'(b_{\sigma}) \sum_{i=1}^{\ell} \frac{1}{U'(u_i)( b_{\sigma} - u_i)}\,.
\end{displaymath}
Now exchange the summation over $i$ in this formula and $\sigma$
in $V_w$ to find
\begin{equation} \label{Qtilde}
   V_w = \frac{1}{2}
   \sum_{i=1}^{\ell} \frac{1}{U'(u_i)} \sum_{\sigma = 0}^{\ell}
   w_{\sigma} \frac{A'(b_{\sigma})}{b_{\sigma} - u_i}
   = \sum_{i=1}^{\ell}
   \frac{1}{U'(u_i)} \frac{\tilde{Q}(b, w ; u_i)}{2A(u_i)}\,,
\end{equation}
where
\begin{displaymath}
   \tilde{Q}(b, w ; u_i) = \sum_{\sigma = 0}^{\ell}
   \frac{w_{\sigma}}{b_{\sigma} - u_i} A(u_i) A'(b_{\sigma}) \,.
\end{displaymath}
Hence, every term in~\eqref{thehamiltonianinuandp} is proportional to
$1/U'(u_i)$ (up to the constant $c  =\frac{1}{2} \sum b_{\sigma}$)
and $H$ reads
\begin{displaymath}
   H(u,p) = c +  \sum_{i=1}^{\ell} \frac{1}{U'(u_i) }
   \left( -2A(u_i) p_i^2 - \frac{1}{2} u_i^{\ell}
   + \frac{\tilde{Q}(b, w ; u_i)}{2A(u_i)} \right) \,.
\end{displaymath}
The Hamilton--Jacobi equation $H(u,\partial S/\partial u) = \rho_1$ becomes
\begin{displaymath}
   0 = \sum_{i=1}^{\ell} \frac{1 }{U'(u_i)}\, h(u_i,\partial S/\partial u_i)\,,
\end{displaymath}
where after shifting $\rho_1$ by $c$ we find
\begin{displaymath}
   h(u_i,p_i) =  -2A(u_i) p_i^2 - \frac{1}{2} u_i^{\ell} - P(u_i)
   + \frac{\tilde{Q}(b, w ; u_i)}{2A(u_i)} \,.
\end{displaymath}
To complete the proof define 
\begin{displaymath}
   Q(\rho; z) = z^{\ell} + 2 P(z) =
   z^{\ell} + 2\rho_1 z^{\ell - 1} + \dots + 2 \rho_{\ell} \,,
\end{displaymath}
and the general solution of the Hamilton--Jacobi equation depending
on $\ell$ separation constants $\rho_i$, $i = 1, \dots, \ell$ is
\begin{displaymath}
   S = \frac{1}{2} \sum_{i=1}^{\ell}
   \int^{u_i} \frac{\zeta(z)}{A(z)} \dee z \,,
\end{displaymath}
where $\zeta$ is found by solving $h(z,\zeta/(2A(z))) = 0$. 
To make the integral well-defined in the presence of the square root
it is defined on the hyperelliptic curve given by
\begin{displaymath}
    \zeta^2 = - Q(\rho; z) A(z)  + \tilde{Q}(b, w ; z) \,.
\end{displaymath}
\end{proof}

\begin{remark}  \label{rem:genus}
The genus of the curve $\Gamma$ is~$\ell$, which is unchanged by introduction 
of $w_{\sigma}$ since $QA$ has degree $2 \ell + 1$ while $\tilde{Q}$
only has degree $\ell$.
Hence for the reduced system the genus equals $\ell$, which is the number 
of different coefficients minus one.

When $\ell = 0$ the curve is algebraic, this corresponds to the Neumann system
with all axes equal, hence it is the geodesic flow on the sphere with all 
orbits closed, see Section~\ref{sec:GeoSn}.

When $\ell = 1$ the curve is elliptic and explicity given by (recall from
reduction that $w_{\sigma}$ is the value of $J_{\sigma}^2$)
\begin{displaymath}
   \zeta^2 = -(z+2\rho_1)(z-b_0)(z-b_1) + J_0^2(z-b_1)(b_0-b_1)
   + J_1^2(z-b_0)(b_1-b_0) \,.
\end{displaymath}
If $J_0 = J_1 = 0$ the Neumann system on the circle is recovered.

When $\ell = 2$ the curve is hyperelliptic, containing the classical 
Neumann system on the sphere as a special case for $J_0 = J_1 = J_2 = 0$.
\end{remark}

\begin{remark}  \label{Lagrange}
For $\ell = 1$ the curve given in the previous remark is the same as for the 
Lagrange top.
This equivalence is classical and has first been described  by Klein \&
Sommerfeld~\cite{KS}: when the spherical Lagrange top is described on 
the double covering $SU(2) \cong S^3$ of $SO(3)$ then the equations of motion 
of the Lagrange top are those of the Neumann system with potential
$b_0( x_0^2 + x_1^2) + b_1(x_2^2 + x_3^2)$, hence $\ell = 1$, $m_0 = m_1 = 2$.
As a result we can conclude that this degenerate Neumann system has Hamiltonian
monodromy, inherited from the monodromy of the Lagrange top~\cite{cushman97}.

The relation between the Lagrange top on $SO(3)$ and the degenerate
Neumann system  on $S^3$ is interesting because the action of the symmetry
group $\T^2$ in the Lagrange top is not free, although the constituting
$SO(2)$--actions are both free, while for the Neumann system each
$SO(2)$--action already has the trivial fixed points.
The resulting period lattice for the Lagrange top is generated by 
$(J_0 \pm J_1)/2$, and not by $J_0, J_1$.
This is an expression of the fact that $SO(3)$ has half the volume 
of $S^3$, which covers it twice.
\end{remark}

\subsection{Actions and Frequencies}
\label{actionsandfrequencies}

The degenerate Neumann system has up to $\ell + 1$ global actions
$J_{\sigma}$, and for simplicity during the rest of this section
we assume that this maximal number is realised, i.e.\
$\tilde{\ell} = \ell$ and all $m_\sigma \geq 2$.
In addition to the $\tilde{\ell} + 1$ actions $J_{\sigma}$ the
separation of variables of the reduced system gives $\ell$ non-trivial
actions~$I_i$.
The integration proceeds between the branch points of the hyperelliptic
curve~$\Gamma$.
Since the co-ordinates $u$ cover only one $2^{\ell+1}$--tant of the sphere
$S^\ell$ the signs of the variables $\xi$, $\eta$ need to be recovered.
This is very similar to the non-degenerate Neumann system,
see~\cite{dullin01} for the details.
Recovering the sign is crucial when some or all $J_{\sigma}$ vanish
because then the co-ordinate planes can be crossed and hence a passage
to another $2^{\ell+1}$--tant takes place.
The co-ordinate $\xi_{\sigma}$ becomes zero when  $u_i = b_{\sigma}$ for
some $i \in \{ 1, \ldots, \ell \}$.
Consequently, when one of the endpoints
of the segment $ [z_{2i-1}, z_{2i}]$ is equal to $b_{\sigma}$ then the
co-ordinate $\xi_{\sigma}$ changes sign along the cycle $c_i$ encircling
this segment of positive $\zeta$ in the complex $z$--plane. 
This means that the cycle $\gamma_i$ on the Liouville torus corresponds
to the cycle $c_i$ taken twice.
This proves the following result.

\begin{theorem}
   The non-trivial actions of the reduced degenerate Neumann system
   are given by 
   \begin{displaymath}
       I_i = \frac{1}{4\pi} \oint_{\gamma_i} \frac{\zeta}{A} \dee z 
       = \frac{1}{4\pi} \oint_{\gamma_i}
       \left( -Q + \frac{\tilde{Q} }{ A }\right) \frac{\dee z}{\zeta}
       = \frac{1}{4\pi} \oint_{\gamma_i}
       \left( -Q + \sum w_{\sigma} \frac{A'(z)}{z - b_{\sigma}} \right)
       \frac{\dee z}{\zeta} \,,
   \end{displaymath}
   where integration path $\gamma_i$ equals $c_i$ if the segment
   $[z_{2i-1}, z_{2i}]$ contains none of $b_0, \dots, b_\ell$ and
   $\gamma_i = 2 c_i$ otherwise.
   $\,$ \hfill $\Box$
\end{theorem}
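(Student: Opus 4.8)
The plan is to obtain the actions from the Liouville--Arnold prescription $I_i = \frac{1}{2\pi}\oint_{\gamma_i}\sum_k p_k\,\dee u_k$ applied to the complete integral $S$ of the Hamilton--Jacobi equation constructed in Theorem~\ref{thm:degenerateseparation}. Because $S = \frac{1}{2}\sum_i\int^{u_i}\frac{\zeta(z)}{A(z)}\,\dee z$ splits into single--variable summands, the separated momenta are $p_i = \partial S/\partial u_i = \frac{1}{2}\,\zeta(u_i)/A(u_i)$, and on the $i$--th basis cycle of the Liouville torus every $u_k$ with $k\neq i$ stays constant. Each action therefore collapses to a single period on the real part of the hyperelliptic curve $\Gamma$, namely $I_i = \frac{1}{2\pi}\oint_{\gamma_i}p_i\,\dee u_i = \frac{1}{4\pi}\oint_{\gamma_i}\frac{\zeta}{A}\,\dee z$, which is the first of the three asserted expressions.

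The next step is to identify the integration cycle $\gamma_i$, and this I regard as the geometric heart of the argument. Since the elliptical--spherical coordinates $u_i$ only sweep out one $2^{\ell+1}$--tant of $S^\ell$, closing up an orbit on the Liouville torus forces one to recover the signs of the coordinates $\xi_\sigma,\eta_\sigma$ from $\xi_\sigma^2 = U(b_\sigma)/A'(b_\sigma)$. Following the discussion preceding the statement, a contour $c_i$ encircling the cut $[z_{2i-1},z_{2i}]$ returns every $\xi_\sigma$ to its starting value precisely when no endpoint of the cut is one of the $b_\sigma$, in which case $\gamma_i = c_i$; if instead an endpoint equals some $b_\sigma$, then $\xi_\sigma$ passes through zero and reverses sign, so the torus cycle closes only after traversing $c_i$ twice, giving $\gamma_i = 2c_i$. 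Carefully tracking this sign bookkeeping, and hence the factor $2$, is the part I expect to demand the most attention; it is essentially the content attributed to the paragraph that ends with ``This proves the following result.''

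It then remains to record the two algebraic rewritings of the integrand. Substituting the defining equation $\zeta^2 = -Q(\rho;z)A(z)+\tilde Q(b,w;z)$ of $\Gamma$ gives at once $\frac{\zeta}{A} = \frac{\zeta^2}{A\zeta} = \bigl(-Q + \frac{\tilde Q}{A}\bigr)\frac{1}{\zeta}$, which yields the second expression. For the third I would expand $\tilde Q/A$ in partial fractions: as $\deg\tilde Q = \ell < \deg A$ and $A$ has simple roots, $\res_{z=b_\sigma}\frac{\tilde Q}{A} = \tilde Q(b_\sigma)/A'(b_\sigma) = w_\sigma A'(b_\sigma)$, so that $\frac{\tilde Q}{A} = \sum_\sigma\frac{w_\sigma A'(b_\sigma)}{z-b_\sigma}$. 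Passing from this to $\sum_\sigma w_\sigma\frac{A'(z)}{z-b_\sigma}$ replaces $A'(b_\sigma)$ by $A'(z)$, which adds the polynomial $\sum_\sigma w_\sigma\frac{A'(z)-A'(b_\sigma)}{z-b_\sigma}$ of degree at most $\ell-1$ to the integrand; I would then have to show that the corresponding holomorphic differential contributes nothing to the period over $\gamma_i$. Since differentials of this degree generically carry nonzero periods, reconciling this replacement is the subtle algebraic point, and it is here, rather than in the substitution or the residue computation, that I would concentrate the remaining effort.
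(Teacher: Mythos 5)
Your proposal follows the paper's own proof line for line: the paper's argument is exactly the paragraph preceding the theorem, namely (i) read off $p_i = \partial S/\partial u_i = \tfrac{1}{2}\,\zeta(u_i)/A(u_i)$ from the separated generating function of Theorem~\ref{thm:degenerateseparation}, so that $I_i = \frac{1}{4\pi}\oint_{\gamma_i}\zeta\,\dee z/A$, and (ii) observe that $\xi_\sigma^2 = U(b_\sigma)/A'(b_\sigma)$ vanishes, and $\xi_\sigma$ reverses sign, exactly when an endpoint of $[z_{2i-1},z_{2i}]$ equals some $b_\sigma$, so that the torus cycle is $c_i$ or $2c_i$ accordingly. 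Your first two paragraphs reproduce this, and your use of the curve equation for the second expression is the paper's (implicit) algebra made explicit.

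The point at which you stop --- justifying the replacement of $A'(b_\sigma)$ by $A'(z)$ --- is not a gap you could close, because as printed the third formula is a typo. Your partial-fraction computation is the correct statement: since $\deg\tilde Q = \ell < \deg A = \ell+1$, $A$ has simple roots, and $\tilde Q(b_\sigma) = w_\sigma A'(b_\sigma)^2$, one has the exact identity of rational functions $\tilde Q/A = \sum_\sigma w_\sigma A'(b_\sigma)/(z-b_\sigma)$; with $A'(b_\sigma)$ in place of $A'(z)$ the third equality is an identity of integrands and there is nothing left to prove (this is also the form appearing in \eqref{Qtilde} and underlying Remark~\ref{rem:hyperelliptic}, up to the paper's own sign inconsistency in the two definitions of $\tilde Q$). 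Moreover, your suspicion is right in the strongest sense: the discrepancy $D(z) = \sum_\sigma w_\sigma\bigl(A'(z)-A'(b_\sigma)\bigr)/(z-b_\sigma)$ is a polynomial of degree $\ell-1$ with leading coefficient $(\ell+1)\sum_\sigma w_\sigma \neq 0$, so $D(z)\,\dee z/\zeta$ is a nonzero holomorphic differential on $\Gamma$, and a nonzero holomorphic differential cannot have vanishing periods over all of the cycles $c_1,\dots,c_\ell$ (these are a set of $a$--cycles of $\Gamma$, and the $a$--period map on holomorphic differentials is injective). Hence no further effort could make the displayed $A'(z)$ version agree with the other two expressions for generic $w$; the correct conclusion is that the theorem should be read with $A'(b_\sigma)$, and with that emendation your proof is complete and coincides with the paper's.
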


\noindent
When the Neumann system is reduced by the discrete symmetry group 
of reflection of each co-ordinate then for the reduced system the 
non-trivial action is always given by the integral over $c_i$.

\begin{remark}  \label{rem:hyperelliptic}
The actions $I_i$ are hyperelliptic integrals of the third kind, with
poles at $z = b_{\sigma}$ (recall that we assumed $m_{\sigma} \geq 2$ for all $\sigma$).
In the non-degenerate Neumann system the actions are given 
by hyperelliptic integrals of the second kind~\cite{dullin01}.
Consider as the integration path a circle $\tilde c_\sigma$ 
around the pole $z = b_\sigma$. Then 
\begin{displaymath}
   J_\sigma = \frac{1}{2\pi}
   \oint_{\tilde c_\sigma} \frac{\zeta}{A(z)} \, \dee z \,.
\end{displaymath}
In this sense the hyperelliptic curve ``knows'' not only about $I_i$ but
also about the trivial actions~$J_\sigma$. 
The reason for this is the identity
\begin{displaymath}
   \zeta^2(b_{\sigma}) = \tilde{Q}(b, w ; b_{\sigma})
   = w_{\sigma} \prod_{\tau \neq \sigma} (b_{\sigma} - b_{\tau})^2
\end{displaymath}
so that the residue of the action integrand at $z = b_{\sigma}$ is 
the action $ \sqrt{- w_{\sigma}}$.
This is a common phenomenon: if there are simple actions (e.g.\ actions
that are generators of a global symmetry) then the non-trivial actions
of the system are abelian integrals of the third kind and the integration
around the poles gives the trivial actions or multiples thereof.
\end{remark}

\noindent
The integrals $I_1, \ldots, I_{\ell}$ depend on the separation
constants~$\rho_i$ and on the trivial actions~$J_{\sigma}$.
Next to the values $j = (j_0, \dots, j_\ell)$ of these we also
write $h = (h_1, \dots, h_\ell)$ where $h_1$ is the value of the energy
and $h_i$ are the values of the other smooth constants of motion $H_i$
not related to the symmetry group~$G$.
These may e.g.\ be the values of $\ell - 1$ of the $\ell + 1$
integrals~\eqref{eqn:integralsdegenerate}, or the separation constants
$\rho_2, \ldots, \rho_{\ell}$.
The action mapping ${\cal A}$ then takes a regular value
$(h, j) \in \R^\ell \times \R^{\tilde\ell + 1}$
and assigns the actions ${\cal A} = (I, J)$.
The period lattice and the frequencies are determined by the derivatives
of~${\cal A}$.
The vector fields of the actions (written as a vector) can be
expressed in terms of the vector fields of the $2 \ell + 1$ functions
$(H, J)$ as
\begin{displaymath}
   \begin{pmatrix} \X{I} \\ \X{J} \end{pmatrix} =
   T \begin{pmatrix} \X{H} \\ \X{J} \end{pmatrix}
   \quad \mbox{with} \quad  T =
   \begin{pmatrix}
      \frac{\partial I}{\partial H} & \frac{\partial I}{\partial J} \\
      \frac{\partial J}{\partial H} & \frac{\partial J}{\partial J}
   \end{pmatrix} \,.
\end{displaymath}
The matrix $T$ describes the period lattice of the integrable system.
The frequencies of the Hamiltonian flows of $\X{H_i}$ and~$\X{J_{\sigma}}$
are denoted by $\Omega_i$, $\Omega_{\ell + \sigma + 1}$ and taken as
row vectors they form the frequency matrix~$\Omega$. 
By definition the frequencies of a constant of motion are the coefficients
of the decomposition of its vector field in terms of the vector fields of
the actions, hence
\begin{displaymath}
   \begin{pmatrix} \X{H} \\ \X{J} \end{pmatrix} =
   \Omega
   \begin{pmatrix} \X{I} \\ \X{J} \end{pmatrix}
\end{displaymath}
and therefore $\Omega = T^{-1}$ and in particular 
\begin{displaymath}
   \X{H_1} = \Omega_1 \begin{pmatrix} \X{I} \\ \X{J} \end{pmatrix} \,,
\end{displaymath}
so that the frequencies~$\Omega_1$ of the Hamiltonian are the entries in 
the first row of $\Omega = T^{-1}$, hence $\Omega_1 = T^{-t} \e_1$.

In the present case the matrix $T$ has a simple block structure caused by 
 the trivial actions $J_{\sigma}$, namely
\begin{displaymath}
   T =
   \begin{pmatrix}
      \frac{\partial I}{\partial H} & \frac{\partial I}{\partial J} \\
      0 & \id_{\ell+1}
   \end{pmatrix} \,.
\end{displaymath}
Therefore, the frequency matrix is given by 
\begin{displaymath}
   \Omega =
   \begin{pmatrix}
      \left( \frac{\partial I}{\partial H} \right)^{-1} &
      -\left( \frac{\partial I}{\partial H} \right)^{-1}
      \frac{\partial I}{\partial J} \\
      0 & \id_{\ell+1}
   \end{pmatrix} \,.
\end{displaymath}
The $(\ell + 1)$--dimensional identity matrix expresses the fact that $J$
consists of global smooth actions, and hence each of their flows has
frequency one.

A particularly simple case occurs when the reduced system has one degree of 
freedom, $\ell = 1$.
Then the upper left block is a scalar which is also the first entry in the 
frequency vector $\Omega_1$.
Frequency ratios are formed by dividing $\Omega_1$ by one of its entries,
so the scalar $\partial H/\partial I$ cancels and using the first entry
the two frequency ratios are simply $\frac{\partial I}{\partial J_0}$ and
$\frac{\partial I}{\partial J_1}$.

\section{Global structure of the flow}
\label{globalstructureoftheflow}

Let us first comment on some special properties that appear 
when the multiplicity $m_\sigma$ of roots of the quadratic potential in the 
Neumann system is small.

\begin{description}
   \item{$m_{\sigma} = 1$.}
In this case the eigenvalue $a_{\nu} = b_{\sigma}$ is not equal to
any of the other eigenvalues of the quadratic potential.
The factor $O(1)$ of the symmetry group~$G$ is discrete and does
not yield a component of the Momentum mapping~$\J$ --- we formally
set $J_{\sigma} \equiv 0$.
Furthermore it may be preferable not to reduce with respect to this
discrete factor, as this only leads to singular points without
the reward of lowering the number of degrees of freedom.
See also Remarks~\ref{rem:SingRed} and~\ref{rem:Rosochatius}.
   \item{$m_{\sigma} = 2$.}
In this case reduction lowers the number of degrees of freedom by one, but
$2$ equal eigenvalues of the potential do not yet contribute
to a possible superintegrability of the degenerate Neumann system.
In particular, this means that we may observe monodromy by only
reducing the $SO(2)$--factor and not the whole $O(2)$--factor.
This keeps the sign in the value of the Casimir which we rather
choose to be $\J_{\sigma} = L_{ik}$ than $J_{\sigma} = \sqrt{L_{ik}^2}$.
In this way it becomes possible to find a closed non-contractible
loop around a critical value of the Energy--Momentum mapping, which is
located on the boundary of the image of the Energy--Casimir mapping.
See also Remarks~\ref{rem:polynomial}, \ref{rem:signed}, \ref{rem:Rosochatius},
and~\ref{rem:nice} below.
   \item{$m_{\sigma} = 3$.}
As soon as we have at least one group of three (or more) equal
eigenvalues~$a_{\nu}$ of the potential, the degenerate Neumann
system becomes superintegrable.
Still, as long as there are no groups of four or more equal eigenvalues
the weakly regular values of the Momentum mapping $\J$ are in fact
regular values --- there are no syzygies constraining the~$L_{ik}$.
   \item{$m_{\sigma} = 4$.}
This case is still distinguished from the general
$m_{\sigma} \geq 5$ in that the Gra{\ss}mannian
$G_{4,2} \subseteq J_{\sigma}^{-1}(|| \mu_{\sigma} ||) \subseteq \oo(4)$
is a hypersurface of~$J_{\sigma}^{-1}(|| \mu_{\sigma} ||)$, defined by
the zero set of the second global Casimir
$L_{12} L_{34} + L_{14}L_{23} + L_{13} L_{24}$ (next to
$W_{\sigma} = L_{12}^2 + L_{13}^2 + L_{14}^2 + L_{23}^2 + L_{24}^2 + L_{34}^2$)
of~$\oo(4)$.
\end{description}

\subsection{Integral--Casimir Mapping}
\label{image}

In order to describe another aspect of the reduced degenerate Neumann
system we introduce yet another mapping $\IC$ in addition to $\EM$ and~$\EC$ 
considered in Section~\ref{reduce}.
The  name Integral--Casimir mapping captures the fact that there are two 
different kinds of conserved quantities in the system; those that come
from the $G$--action and those that do not.
The momenta of the $G$--action yield Casimirs for $m_{\sigma} \geq 2$.
The other integrals (and of course the energy) are simply
conserved quantities that are not related to the symmetry group~$G$.

Regular reduction together with separation of variables describes 
the system when all $J_{\sigma} > 0$ and when the motion stays outside the 
singularities of the separating co-ordinate system.
Singularities of the separating co-ordinate system occur when one of the roots~$u_i$
is equal to $b_j$, or when $u_i = u_{i+1}$ for some $i$. 
Hence, whenever the hyperelliptic curve has no double roots 
the motion on the corresponding torus is well described by the
elliptical-spherical co-ordinates.
To find the candidates where the Integral--Casimir mapping $\IC$ does not have
full rank we can therefore simply study the discriminant locus of the
hyperelliptic curve. 

\begin{lemma} \label{lem:discriminantlocus}
   The discrimiant locus for the reduced Neumann system with all
   $w_{\sigma} \neq 0$ is topologically a $2^{\ell}$--tant without
   any internal structure.
\end{lemma}

\begin{proof}
The position of the roots of the hyperelliptic curve is such that no
root can be located at $b_{\sigma}$ because setting $z = b_{\sigma}$
in~$\tilde{Q}$ gives
\begin{displaymath}
   \tilde{Q}(b_{\sigma}) = -w_{\sigma} A'(b_{\sigma})^2 \not  = 0\,,
\end{displaymath}
which implies that all the roots $z=b_{\sigma}$ for $w_{\sigma} = 0$ 
are not roots any more when $w_{\sigma} \neq 0$.
For a real motion there must be a positive interval for each $u_i$
between $b_{i-1}$ and $b_{i}$.
Hence there must be two roots in each such interval.
This uses up $2\ell$ roots already.
The remaining root must be to the left of $b_0$, because
$\zeta^2 \to +\infty$ for $z \to - \infty$.
Now all pairs of roots bounding the $u_i$ intervals are separated
by some $b_{\sigma}$, and hence for $w_{\sigma} \neq 0$ the only
possible collision is within a pair, leading to $u_i = u_{i+1}$.
Making $k$ such pairs collide gives a corank $k$ degenerate value in 
the image of the Energy--Momentum mapping.
The corank $k$ critical values give the $(\ell - k)$--dimensional faces
of the $2^{\ell}$--tant.
Making all pairs collide gives the corank $\ell$ elliptic (relative)
equilibrium point of the reduced system.
This corresponds to the origin of the $2^{\ell}$--tant.
\end{proof}

\noindent
For $\ell = 2$ the discriminant locus of the curve of the reduced system
is explicitly given by
\begin{displaymath}
\begin{aligned}
   \rho_1 = & -s + \sum_{\sigma = 0}^2
   w^2_{\sigma} \frac{A'(b_{\sigma})}{2(s-b_{\sigma})^2} \\
   \rho_2 = & s^2 + \sum_{\sigma = 0}^2
   w^2_{\sigma} \frac{A'(b_{\sigma})}{(s-b_{\sigma})}
   \left( 1 + \frac{b_{\sigma}}{2(s-b_{\sigma})} \right) \,.
\end{aligned}
\end{displaymath}
When $w_{\sigma} = 0$ then in addition the straight line given by
$Q(b_{\sigma}) = 0$ needs to be added to the discriminant locus.

\subsection{Convexity}
\label{convexity}

The simple discrimiant loci described in the previous
Lemma~\ref{lem:discriminantlocus} are the fibres over $J$--space.
In this view the $J$--space is the parameter space for the reduced system. 
Instead of the discriminant locus of the reduced system we now study the
$J$--space as the image of the Casimir mapping of the full system.
More precisely, we consider the image of the Momentum mapping of the
$\T^{\ell+1}$--action on phase space given by $J$,
for short the Casimir mapping.
To obtain a compact image we fix the energy, and study the set of possible
values $j$ of~$J$.

For each $j$ there is a unique elliptic (relative) equilibrium of the
reduced system. 
This corresponds to the origin of the $2^{\ell}$--tant of
Lemma~\ref{lem:discriminantlocus}, and there is a corresponding energy. 
Fixing instead the energy and asking for all equilibria with any value~$j$
gives the boundary of the image of the Casimir mapping restricted to
constant energy.

\begin{theorem}
   The image of the Energy--Casimir mapping is a convex set for sufficiently
   large~$h$.
\end{theorem}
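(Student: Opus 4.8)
The plan is to identify the set in question explicitly and then read off its convexity from the preceding Lemma. Fix the energy value $h$ and consider the image of the Casimir mapping $J=(J_0,\dots,J_\ell)$ restricted to the energy surface $\{H=h\}$; since $J_\sigma=\sqrt{W_\sigma}\ge 0$ this image $\mathcal{I}_h$ lives in the closed orthant of $\R^{\ell+1}$. First I would show that a value $j=(j_0,\dots,j_\ell)$ with all $j_\sigma\ge 0$ lies in $\mathcal{I}_h$ precisely when $h\ge h(j)$, where $h(j)$ is the energy of the (unique) elliptic relative equilibrium at momentum $j$. This rests on two observations about the reduced Hamiltonian $H=\frac12\sum\eta_\sigma^2+V_{\mu}$ on $T^*S^\ell$: on a fibre of $J$ the kinetic part $\frac12\sum\eta_\sigma^2$ is unbounded above, while its minimum over the fibre is attained with $\etab=0$ at the minimum of the amended potential $V_{\mu}$ on $\sum\xi_\sigma^2=1$, i.e.\ at the elliptic relative equilibrium. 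By the regular Energy--Casimir description this minimal value is exactly $h(j)=\sum_\sigma j_\sigma(\omega_\sigma+b_\sigma/\omega_\sigma)$ with $\omega_\sigma=\sqrt{b_\sigma-\beta}$ and $\beta$ fixed by $\sum\xi_\sigma^2=1$. Consequently $\mathcal{I}_h$ is the sublevel set $\{\,j\ge 0:\ h(j)\le h\,\}$.

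With this identification the convexity becomes immediate from the preceding Lemma. That Lemma computes the Hessian of $h(j)$ as $\frac{2}{\omega_\sigma\omega_\tau}(\sum_{\nu=0}^{\ell} j_\nu/\omega_\nu^3)^{-1}$, a rank-one positive semidefinite matrix, so $h$ is convex on the interior of the orthant. Hence every sublevel set $\{h(j)\le h\}$ is convex, and intersecting it with the convex orthant $\{j_\sigma\ge 0\}$ preserves convexity; this gives the asserted convexity of the image.

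The point where \emph{sufficiently large} $h$ enters --- and the main obstacle --- is the validity of the formula for $h(j)$ up to and across the coordinate faces $j_\sigma=0$. The expression $\xi_\sigma^2=j_\sigma/\omega_\sigma$ with a common multiplier $\beta<b_0$ describes the minimising equilibrium only while it is fully excited; on a face $j_\sigma=0$ the minimiser may instead be a relative equilibrium of a lower-dimensional Neumann subsystem, with one coordinate frozen at a pole and $\beta$ crossing the corresponding $b_\sigma$. At such a crossing the relative equilibrium bifurcates, so the minimum-energy function $h_{\min}(j)$ is a priori only piecewise given by the convex formula, and I would have to check that the patched function is still convex on the whole closed orthant (equivalently, that the level surface glues convexly onto the faces). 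For large $h$ this difficulty disappears: the boundary $h(j)=h$ sits at large $|j|$, which forces $\beta\to-\infty$ and keeps the equilibrium fully excited, so the boundary is a single smooth convex surface and $\mathcal{I}_h$ is cleanly the convex sublevel set.

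In the same regime $\omega_\sigma\to\sqrt{-\beta}$, and the constraint $\sum_\sigma j_\sigma/\omega_\sigma=1$ gives $\sqrt{-\beta}\sim\sum_\sigma j_\sigma$, whence $h(j)\sim(\sum_\sigma j_\sigma)^2$; thus the rescaled image tends to the simplex $\{\,j\ge 0:\ \sum_\sigma j_\sigma\le\sqrt{h}\,\}$, recovering the convex polyhedron announced in the abstract. Controlling the relative-equilibrium bifurcations on the faces, and thereby confirming that no concave indentations survive at finite energy, is the technical heart of the argument and the reason the clean statement is made for sufficiently large $h$.
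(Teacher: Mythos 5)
Your proposal is correct in substance, but it reaches the theorem by a genuinely different route than the paper. You identify the image of the Energy--Casimir mapping as the epigraph of the relative-equilibrium energy $h(j)$ (equivalently, each fixed-$h$ slice as a sublevel set intersected with the orthant), and then import convexity of $h(j)$ from the preceding Lemma, whose rank-one positive semi-definite Hessian $\frac{2}{\omega_\sigma\omega_\tau}\bigl(\sum_\nu j_\nu/\omega_\nu^3\bigr)^{-1}$ comes from implicit differentiation in the mechanical variables $(j,\beta)$ of the amended potential. The paper never invokes that Lemma here: it re-derives the same rank-one Hessian by explicitly parametrising the corank-$\ell$ stratum of the discriminant locus of the hyperelliptic curve (double roots $s_k$, symmetric functions $t_k$, with $\omega_\sigma^2 = h + b_\sigma - 2t_1$), so that positivity becomes the condition $\polpo > 0$, and reality of all $\omega_\sigma$ over the whole stratum yields the explicit threshold $h > -2(B-b_\ell) - b_0$, $B = \sum b_\sigma$. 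Your route buys conceptual clarity: it makes explicit the identification ``boundary of the image $=$ minimal energy on $J$--fibres $=$ elliptic relative equilibria,'' which the paper states only informally before the theorem, and it directly gives the simplex asymptotics. The paper's route buys uniformity and explicitness: the curve parametrisation covers the closure of the boundary stratum all at once, produces a concrete lower bound on $h$, and feeds the polyhedron limit of the subsequent theorem. The one soft spot you correctly flag --- behaviour at the faces $j_\sigma = 0$ --- can be closed exactly as you suggest: on the level set $\{h(j)=h\}$ the constraint $\sum_\sigma j_\sigma/\omega_\sigma = 1$ forces $\sqrt{b_\ell - \beta} \geq \sum_\sigma j_\sigma$, so large $h$ keeps $\beta$ uniformly below $b_0$ and the minimisers uniformly elliptic up to the closure of the orthant (alternatively, continuity of the minimal-energy function extends interior convexity to the closed orthant); this plays the same role as the paper's reality condition on the $\omega_\sigma$.
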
 

\begin{proof}
It is important to use the actions $J_{\sigma}$ as co-ordinates in the image,
not their squares.
The result follows from the explicit parametrisation of the discriminant
locus.
We equate $ - QA + \tilde{Q}$ to $-(z-s)^2 (z - r_1) \dots (z - r_{2\ell - 1})$
and solve for $j_{\sigma}$ and~$\rho_i$.
This leads to
\begin{displaymath}
   \rho_1 = -s + \frac{1}{2}\sum_{\sigma = 0}^{\ell} b_{\sigma}
   - \frac{1}{2} \sum_{k=1}^{2 \ell - 1} r_k
\end{displaymath}
and 
\begin{displaymath}
    j_{\sigma} = \frac{b_{\sigma} - s}{A'(b_{\sigma})} 
     \left ( \prod_{k = 1}^{2\ell-1} (b_{\sigma} - r_k) \right)^{1/2} \,.
\end{displaymath}
Consider now a parametrisation of the corank $\ell$ stratum of the
discriminant locus $\Sigma$, which gives the relative equilibria which
mark the boundary of the image of the Casimir mapping.
Hence we parametrise by the~$\ell$ double roots, $s_k$, $k = 1, \dots, \ell$
and find as a special case of the above formula the values where $r_k$
are made pairwise equal and $r$~denotes the remaining single root
\begin{displaymath}
   j_{\sigma} = 
   \frac{ \sqrt{ b_{\sigma} - r } }{A'(b_{\sigma})}  
   \prod_{k=1}^{\ell} (b_{\sigma} - s_k)
\end{displaymath}
and at the same time $\rho = \rho(s_k, r)$.
Since $\rho_1$ is linear in $r$ we can eliminate $r$ in favour of
the value $h = \rho_1$ of the Hamiltonian and the sum of the~$s_k$. 
Also we introduce the symmetric functions $t_k$ of the $s_k$ and find
\begin{displaymath}
   j_{\sigma} = \frac{\omega_\sigma}{A'(b_{\sigma})} 
   ( b_{\sigma}^{\ell} + b_{\sigma}^{\ell-1} t_1 + \dots + t_{\ell}), 
   \quad \mbox{with} \quad \omega_\sigma^2 = h + b_\sigma - 2 t_1 \,.
\end{displaymath}
The parametrisation is linear in $t_k$ for all $k > 1$.

For sufficiently large energy the boundary of the image of the
Energy--Casimir mapping is given by the critical values
$j_\sigma(h, t_1, \dots, t_\ell)$.
This implicitly defines  $h_c(j_0, \dots, j_\ell)$, the value of the
critical energy as a function of the momenta.
By implicit differentiation we compute the gradient of the energy
\begin{displaymath}
  \frac{ \partial h_c}{\partial j_\sigma} = 2 \omega_\sigma
\end{displaymath}
and the Hessian as
\begin{displaymath}
   \frac{ \partial^2 h_c}{\partial j_\sigma j_\tau}
   = 2 \frac{\polw}{\polpo} \frac{1}{\omega_\sigma \omega_\tau}, \quad
   \polpo = \sum_{\tau = 0}^{\ell} (h-2t_1)^{\ell-\tau} (-1)^\tau t_\tau , 
   \quad \polw = \prod_{\tau = 0}^{\ell} \omega_\tau^2
\end{displaymath}
This is a rank 1 symmetric matrix with nonzero eigenvalue
\begin{displaymath}
   \lambda = 2 \frac{\polw}{\polpo}
       \sum_{\sigma = 0}^{\ell} \frac{1}{ \omega_\sigma^2 } 
\end{displaymath}
whose eigenvector is $(1/\omega_0, \dots, 1/\omega_\ell)$.
As a result the Taylor expansion of the critical energy is
\begin{displaymath}
   h_c(j + \Delta) = 2 \sum_{\sigma = 0}^{\ell} \omega_\sigma \Delta_\sigma + 
   2 \frac{\polw}{\polpo}
   \left( \sum_{\sigma = 0}^{\ell} \frac{\Delta_\sigma}{\omega_\sigma}\right)^2
   + O(\Delta^3) \,.
\end{displaymath}
If $\polpo > 0$ then the Hessian is positive semi-definite, and the boundary
of the image of the Energy--Casimir mapping is the graph of a convex function,
and hence the image is a convex set.
Now $b_i < s_{i+1} < b_{i+1}$, and with $t_1 = - \sum s_i$ we find
\begin{displaymath}
   B - b_0 < -t_1 < B - b_\ell , \quad
   B = \sum_{\sigma = 0}^{\ell} b_\sigma
\end{displaymath}
Since all $\omega_\sigma$ must be real, we conclude that the image is convex if 
the energy is sufficiently large, namely when $h > - 2(B-b_\ell) - b_0$.
Rewriting $t_i$ in the roots $s_i$ gives
\begin{displaymath}
   \polpo =  \prod_{i=1}^{\ell} ( h + s_i + 2S), \quad
   S =  \sum_{i=1}^{\ell} s_i \,.
\end{displaymath}
From the above $h + 2 S > -2(B-b_\ell) - b_0 + 2B - 2b_0 = 2b_\ell - 3 b_0$
and therefore $h + s_k + 2S > 2 b_\ell - 2 b_0 > 0$ and $P > 0$ as desired.
\end{proof}

\begin{theorem}
For sufficiently large fixed energy the image of the Casimir mapping~$J$
given by the $\T^{\ell+1}$--action is convex. 
For $h \to \infty$ the boundary tends to a convex polyhedron.
\end{theorem}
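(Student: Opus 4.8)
The plan is to derive both statements from the convexity of the critical--energy function $h_c(j)$ established in the preceding theorem, combined with the explicit boundary parametrisation by the $\ell$ double roots~$s_k$. For the first assertion I would note that the image of the Casimir mapping at a fixed energy~$h$ is exactly the sublevel set $\{\, j : h_c(j) \leq h \,\}$ of that function: for a prescribed $j$ the attainable energies form the ray $[\, h_c(j), \infty)$ whose lower endpoint is the relative equilibrium, so $j$ occurs on the surface $H = h$ precisely when $h_c(j) \leq h$. Since $h_c$ is convex once $h$ exceeds the threshold of the previous theorem, this sublevel set is convex, and intersecting it with the orthant $j_\sigma \geq 0$ (forced by $J_\sigma = \sqrt{W_\sigma} \geq 0$) preserves convexity; equivalently it is the planar slice $\{h = \mathrm{const}\}$ of the convex Energy--Casimir image.

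For the polyhedral limit I would rescale $\hat{j}_\sigma = j_\sigma / \sqrt{h}$ and let $h \to \infty$ in the boundary parametrisation $j_\sigma = \frac{\omega_\sigma}{A'(b_\sigma)} \bigl( b_\sigma^{\ell} + b_\sigma^{\ell-1} t_1 + \dots + t_\ell \bigr)$ with $\omega_\sigma^2 = h + b_\sigma - 2 t_1$. As each double root $s_k$ stays trapped in the fixed interval $(b_{k-1}, b_k)$, the symmetric functions $t_1, \dots, t_\ell$ remain bounded, so $\omega_\sigma = \sqrt{h}\,(1 + O(1/h))$ uniformly and $\hat{j}_\sigma \to P(b_\sigma)/A'(b_\sigma)$ with $P(z) = \prod_k (z - s_k)$. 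The limiting outer face is then flat: the interpolation identity $\sum_\sigma b_\sigma^{k}/A'(b_\sigma) = \delta_{k \ell}$ for $0 \leq k \leq \ell$ yields $\sum_\sigma \hat{j}_\sigma = 1$, while a sign count showing that both $\prod_k (b_\sigma - s_k)$ and $A'(b_\sigma)$ carry the sign $(-1)^{\ell - \sigma}$ under the interlacing gives $\hat{j}_\sigma > 0$. Hence the rescaled image tends to the closed simplex $\{\, \hat{j}_\sigma \geq 0,\ \sum_\sigma \hat{j}_\sigma \leq 1 \,\}$, a convex polyhedron; its convexity is moreover automatic, being a Hausdorff limit of the convex fixed--energy images supplied by the first claim.

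To confirm that the whole facet is attained I would invert the parametrisation: given $\hat{j}_\sigma > 0$ with $\sum_\sigma \hat{j}_\sigma = 1$, the monic degree--$\ell$ polynomial $P(z) = A(z) \sum_\sigma \hat{j}_\sigma/(z - b_\sigma)$ has an associated rational function with positive residues at the simple poles $b_\sigma$, hence runs from $+\infty$ to $-\infty$ across each gap $(b_{k-1}, b_k)$ and so has exactly one root $s_k$ there, recovering an admissible parameter point. I expect the main obstacle to be this surjectivity together with the accompanying Hausdorff--convergence bookkeeping --- showing that the rescaled boundaries converge to the simplex \emph{as sets}, uniformly over the parameter box, and that the flat outer face glues correctly to the coordinate faces $\hat{j}_\sigma = 0$ to assemble the full polyhedron. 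The asymptotics of $\omega_\sigma$ and the symmetric--function identities themselves are routine.
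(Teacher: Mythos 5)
Your proposal is correct and takes essentially the same route as the paper: the fixed-energy image of the Casimir mapping is the (convex) sublevel set $\{\, j : h_c(j) \leq h \,\}$ of the convex critical-energy function from the preceding theorem, and the polyhedral limit comes from the $\sqrt{h}$--rescaling under which $\omega_\sigma/\sqrt{h} \to 1$ and the boundary parametrisation becomes linear. Your explicit identification of the limit as the simplex $\{\, \hat{j}_\sigma \geq 0,\ \sum_\sigma \hat{j}_\sigma \leq 1 \,\}$, via the interpolation identity $\sum_\sigma b_\sigma^{k}/A'(b_\sigma) = \delta_{k\ell}$, the sign count, and the surjectivity of the root parametrisation, is a sharpening of the paper's terser remark that setting $\omega_\sigma = 1$ in the limit makes the non-linearity disappear.
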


\begin{proof}
Since we have shown that $h_c(j_0, \dots, j_\sigma)$ is a convex function, and
$h_c$ is the boundary of the image of the Energy--Casimir mapping for
$h > - 2(B-b_\ell) - b_0$, every slice of constant energy through this convex
set is again a convex set.
This set is the image of the Casimir mapping applied to the surface of constant
energy.
In the limit $h \to \infty$ it becomes a convex polyhedron when rescaled
by $\sqrt{h}$.
This amounts to setting $\omega_\sigma = 1$ in the limit, and so the
non-linearity disappears. 
Even for finite $h$ the boundary has zero curvature for $\ell \ge 2$.
In fact the boundary is a ruled surface for $\ell = 2$ and a higher dimensional
analogue with an $\ell-1$ dimensional plane attached at each point of
the hypersurface.
\end{proof}

\begin{remark}  \label{rem:nice}
The above statement about convexity becomes particularly nice in the case that all $m_\sigma = 2$.
Then the $J_\sigma$ can all be chosen to be signed quantities, and are defined
also for~$J_\sigma = 0$, so that there is a global $\T^{\ell+1}$--action
on phase space.
In this way the convex set extends through the faces $J_\sigma = 0$.
Note that this convexity result is obtained even though 
the phase space is not compact.
The compactness is achieved by restricting to the compact energy
surfaces $h = \const$.
\end{remark}

\appendix
\section*{Acknowledgement}

We like to thank Chris Davison for carefully reading the manuscript.
Furthermore we thank the Mathematics Institute at the University of Warwick
(where this work was begun) and the Centre Interfacultaire Bernoulli at the
EPF Lausanne (where this work was nearly finished) for their hospitality.
This research was partially supported by the European Research Training Network
{\it Mechanics and Symmetry in Europe\/} (MASIE), HPRN-CT-2000-00113.
HRD was supported in part by ARC grant DP110102001.

\section{Jacobi Trick}
\label{jacobitrick}

Let 
\[
   U(z) = \prod_{i=1}^{\ell} (z-u_i)
\]
be a polynomial of degree $\ell$ with {\em distinct} roots $u_i$.
Then the derivative of U evaluated at the root $u_k$ is
\[
   U'(u_k) = \prod_{i=1, i \neq k}^{\ell} (u_k - u_i) \,.
\]
This gives a way to exclude one factor in $U$ and thus the partial fraction decomposition of $1/U$ is
\[
   \frac{1}{U(z)} = \sum_{i=1}^{\ell} \frac{1}{U'(u_i)( z - u_i)} \,.
\]
The identity (the use of which is sometimes called Jacobi's trick)
\[
   \sum_{i=1}^{\ell} u_i = \sum_{i=1}^{\ell} \frac{u_i^{\ell} }{U'(u_i) } 
\]
follows from calculation of 
\[
   \frac{1}{2\pi} \oint \frac{z^{\ell}}{U(z)} \dee z
\] 
along a countour that is a sufficiently large
circle that encloses all poles $u_i$.
The left hand side is found by calculation of 
the residue at infinity, 
while the right hands side is found from the sum over the finite residues.
In a similar way the identity 
\[
   1 = \sum_{i=1}^{\ell} \frac{u_i^{\ell - 1}}{U'(u_i)} 
\]
can be obtained.
Lowering the exponent further the residue at infinity vanishes.
Jacobi's trick is used in the separation of variables
in order to introduce the separation constants.
For powers of $z$  greater than $\ell$ higher order symmetric functions
of the roots are found.

\begin{footnotesize}

\end{footnotesize}


\begin{thebibliography}{32}

\bibitem{atiyah82}
{\sl M.F. Atiyah~:}
Convexity and commuting {H}amiltonians~;
{\em Bull.\ London Math.\ Soc.}\ {\bf 14}(1), p. 1--15 (1982)

\bibitem{atiyah83}
{\sl M.F. Atiyah~:}
Angular momentum, convex polyhedra and algebraic geometry~;
{\em Proc.\ Edinburgh Math.\ Soc., Ser.2}\ {\bf 26}(2), p. 121--133 (1983)

\bibitem{bates93}
{\sl L. Bates, M. Zou :}
Degeneration of Hamiltonian monodromy cycles~; 
{\it Nonlinearity}\ {\bf 6}(2), p. 313--335 (1993)

\bibitem{cushman97}
{\sl R.H. Cushman and L.M. Bates :}
{\it Global Aspects of Classical Integrable Systems}~;
Birkh\"auser (1997)

\bibitem{Davidson07a}
{\sl C.M. Davison and H.R. Dullin and A.V. Bolsinov~:}
Geodesics on the Ellipsoid and Monodromy~;
{\it J. Geom.\ Phys.}\ {\bf 57}, p. 2437--2454 (2007)

\bibitem{Davidson07b}
{\sl C.M. Davison and H.R. Dullin~:}
Geodesic flow on three dimensional ellipsoids with equal semi-axes~;
{\it Regul.\ Chaotic Dyn.}\ {\bf 12}, p. 172--197 (2007)

\bibitem{Devaney78}
{\sl R.L. Devaney :}
Transversal homoclinic orbits in an integrable system~;
{\it Amer.\ J. Math.}\ {\bf 100}, p. 631--642 (1978)

\bibitem{dullin01}
{\sl H.R. Dullin, P.H. Richter, A.P. Veselov and H.Waalkens :}
Actions of the Neumann system via Picard--Fuchs equations~;
{\it Physica D}\ {\bf 155}, p. 159--183 (2001)

\bibitem{efstathiou04}
{\sl K. Efstathiou :}
{\it Metamorphoses of Hamiltonian systems with symmetries}~;
LNM {\bf 1864}, Springer (2005)

\bibitem{evans90}
{\sl N.W. Evans :}
Superintegrability in classical mechanics~;
{\it Phys.\ Rev.\ A}\ {\bf 41}(10), p. 5666--5676 (1990)

\bibitem{Fasso96}
{\sl F. Fass\`o:}
The Euler--Poinsot top~: A non-commutatively integrable system without
global action-angle coordinates~;
{\it ZAMP}\ {\bf 47}, p. 953--976 (1996)

\bibitem{Fasso04}
{\sl F. Fass\`o:}
Superintegrable Hamiltonian systems~: Geometry and Perturbation~;
p. 93--121 in
{\it Symmetry and Perturbation Theory, Cala Gonone 2004}
(ed. G. Gaeta)
{\it Acta Appl.\ Math.}\ {\bf 87} (2005)

\bibitem{guillemin82}
{\sl V.~Guillemin and S.~Sternberg~:}
Convexity properties of the moment mapping~;
{\em Invent.\ Math.}\ {\bf 67}(3), p. 491--513 (1982)

\bibitem{kibler90}
{\sl M. Kibler, P. Winternitz :}
Periodicity and quasi-periodicity for super-integrable Hamiltonian systems~;
{\it Phys\ Lett.\ A}\ {\bf 147}(7), p. 338--342 (1990)

\bibitem{KS}
{\sl F.~Klein, A.~Sommerfeld :}
{\it \"Uber die Theorie des Kreisels}~;
Teuber (1910)

\bibitem{Knoerrer82}
{\sl H.~Kn{\"o}rrer :}
Geodesics on quadrics and a mechanical problem of {C}. {N}eumann~;
{\it J.~Reine Angew.\ Math.}\ {\bf 334}, p. 69--78 (1982)

\bibitem{Knoerrer85}
{\sl H.~Kn{\"o}rrer :}
Singular fibres of the momentum mapping for integrable {H}amiltonian systems~;
{\it J. Reine Angew.\ Math.}\ {\bf 355}, p. 67--107 (1985)

\bibitem{lerman03}
{\sl Eugene Lerman~:}
Contact toric manifolds~;
{\em J. Symplectic Geom.}\ {\bf 1}(4), p. 785--828 (2003)

\bibitem{Macfarlane92}
{\sl A.J. Macfarlane :}
The quantum Neumann model with the potential of Rosochatius~;
{\it Nucl.\ Phys.\ B}\ {\bf 386}, p. 453--467 (1992)

\bibitem{marsden74}
{\sl J.E. Marsden, A. Weinstein :}
Reduction of symplectic manifolds with symmetry~;
{\it Rep.\ Math.\ Phys.}\ {\bf 5}(1), p. 121--130 (1974)

\bibitem{MR}
{\sl J.E. Marsden, T. Ratiu :}
{\it Introduction to Mechanics and Symmetry}~;
Springer (1994)

\bibitem{fomenko78}
{\sl A.S. Mi{\v{s}}{\v{c}}enko, A.T. Fomenko~:}
A Generalized {L}iouville method for the integration of {H}amiltonian systems~;
{\it Functional Anal.\ Appl.}\ {\bf 12}, p. 113--121 (1978)

\bibitem{Moser80}
{\sl J. Moser :}
Various aspects of integrable Hamiltonian systems~;
{\it Prog.\ Math.}\ {\bf 8}, p. 233--289 (1980)

\bibitem{Moser80b}
{\sl J.~Moser :}
Geometry of Quadrics and Spectral Theory~;
p. 147--188 in
{\it The Chern Symposium 1979}
(eds. W.-Y. Hsiang et al.)
Springer (1980)

\bibitem{Moser83}
{\sl J. Moser :}
Integrable Hamiltonian systems and spectral theory~;
{\it Lezioni Fermiane [Fermi Lectures].}
Scuola Normale Superiore, Pisa (1983)

\bibitem{nekhoroshev72}
{\sl N.N. Nehoro\v{s}ev :}
Action--angle variables and their generalizations~;
{\it Trans.\ Mosc.\ Math.\ Soc.}\ {\bf 26}, p. 180--198 (1972)

\bibitem{neumann56}
{\sl C. Neumann :}
De problemate quodam mechanico, quod ad primam integralium ultraellipticorum
classem revocatur (Diss.\ Regiomonti 1856)~;
{\it J. Reine Angew.\ Math.}\ {\bf 56}, p. 46--63 (1859)

\bibitem{Ratiu81}
{\sl T. Ratiu :}
The C.~Neumann problem as a completely integrable system on an adjoint orbit~;
{\it Trans.\ Amer.\ Maths.\ Soc.}\ {\bf 264}(2), p. 321--329 (1981)

\bibitem{rosochatius77}
{\sl E. Rosochatius :}
{\it \"Uber die Bewegung eines Punktes}~;
Inaugural Dissertation G\"ottingen, Unger (1877)

\bibitem{sjamaar98}
{\sl Reyer Sjamaar~:}
Convexity properties of the moment mapping re-examined~;
{\em Adv.\ Math.}\ {\bf 138}(1), p. 46--91 (1998)

\bibitem{Veselov94}
{\sl A. P. Veselov :}
Two remarks about the connection of {J}acobi and {N}eumann integrable systems~;
{\it Math.\ Z.}\ {\bf 216}(3), p. 337--345 (1994)

\bibitem{Zhangju92}
{\sl Zhangju, Liu :}
A note on the C. Neumann Problem~;
{\it Acta Math.\ Aplli.\ Sinica}\ {\bf 8}(1), p. 1--5 (1992)

\end{thebibliography}
\end{document}